\newtheorem{rem}{Remark}
\newtheorem{example}{Example}
\newcommand{\email}[1]{\protect\href{mailto:#1}{#1}}
\newcommand\funding[1]{\protect\\ \hspace*{15.37pt}{\bfseries Funding:} #1}
\newtheorem{definition}{Definition}
\newtheorem{theorem}{Theorem}
\newtheorem{proposition}{Proposition}
\newtheorem{lemma}{Lemma}
\newtheorem{corollary}{Corollary}
\newcommand{\Rn}{\mathbb{R}^n}
\newcommand{\norm}[1]{\left\lVert#1\right\rVert}
\newcommand{\ideal}[1]{\left\langle#1\right\rangle}
\newcommand{\R}{\mathbb{R}}
\newcommand{\C}{\mathbb{C}}
\newcommand{\Q}{\mathbb{Q}}
\newcommand{\Z}{\mathbb{Z}}
\newcommand{\T}{\operatorname{T}}
\newcommand{\vol}[1]{\operatorname{vol}(#1)}
\newcommand{\LL}{\mathcal{L}}
\newcommand{\LLL}{\mathcal{L}_{\mathbf{v}_1}^{(r,s)}}
\newcommand{\LLLL}{\mathcal{L}_{\T,\mathbf{v}_1}^{(r,s)}}
\newcommand{\set}[1]{\left\{#1\right\}}
\newcommand{\B}[1]{\mathbf{#1}}
\newcommand{\shrinkmargins}[1]{
  \addtolength{\textheight}{#1\topmargin}
  \addtolength{\textheight}{#1\topmargin}
  \addtolength{\textwidth}{#1\oddsidemargin}
  \addtolength{\textwidth}{#1\evensidemargin}
  \addtolength{\topmargin}{-#1\topmargin}
  \addtolength{\oddsidemargin}{-#1\oddsidemargin}
 \addtolength{\evensidemargin}{-#1\evensidemargin}
  }
\title{{Dense Generic Well-Rounded Lattices}\thanks{Revision submitted to the editors \today. \funding{This work has been supported by the Research Council of Finland under Grant No. 351271 (PI~C.~ Hollanti). The visiting professor funding from the Aalto Science Institute to G.~Mantilla-Soler is also gratefully acknowledged.}}}
\author{Camilla Hollanti\footnotemark[2]\thanks{Department of Mathematics and Systems Analysis, Aalto University, Finland, \email{camilla.hollanti@aalto.fi}, \email{niklas.miller@aalto.fi}.}\and Guillermo Mantilla-Soler\footnotemark[3]\thanks{Department of Mathematics, Universidad Nacional de Colombia sede Medell\'in, Colombia, 
\email{gmantelia@gmail.com}.}\and
Niklas Miller\footnotemark[2]}
\begin{document}

\maketitle

\begin{abstract}
    It is well-known that the densest lattice sphere packings also typically have large kissing numbers. The sphere packing density maximization problem is known to have a solution among well-rounded lattices, of which the integer lattice $\Z^n$ is the simplest example. The integer lattice is also an example of a generic well-rounded lattice, \emph{i.e.}, a well-rounded lattice with a minimal kissing number. However, the integer lattice has the worst density among well-rounded lattices.   In this paper, the problem of constructing explicit  generic well-rounded  lattices with dense sphere packings is considered. To this end, so-called  \emph{tame} lattices recently introduced by Damir and Mantilla-Soler are utilized. Tame lattices came to be as a generalization of the ring of integers of certain abelian number fields. The sublattices of tame lattices constructed in this paper are shown to always result in either a generic well-rounded lattice or the lattice $A_n$, with  density ranging between that of $\Z^n$ and $A_n$. In order to find generic well-rounded lattices with densities beyond that of $A_n$, explicit deformations of some known densest lattice packings are constructed, yielding a family of generic well-rounded lattices with densities arbitrarily close to the optimum. In addition to being an interesting mathematical problem on its own right, the constructions are also motivated from a more practical point of view. Namely, generic well-rounded lattices with high packing density make good candidates for lattice codes used in secure wireless communications.
\end{abstract}

\begin{adjustwidth}{25pt}{25pt}
\footnotesize{\textbf{Key words.} Dense lattice sphere packings, generic well-rounded lattices, kissing number, tame lattices, trace forms.}
\end{adjustwidth}

\begin{adjustwidth}{25pt}{25pt}
\footnotesize{\textbf{MSC codes.}
    11H31, 
    11F27, 
    11P21, 
    11H71.}
\end{adjustwidth}






\section{Introduction}

Lattices are of broad interest both in mathematics and applications. \emph{Well-rounded (WR) lattices} \cite{Fukshansky1,Fukshansky2} are a profound family of lattices, providing a sufficient source for various optimization problems. For instance, the known densest lattice sphere packings (\emph{e.g.}, the Gosset lattice $E_8$) arise from well-rounded lattices, and it has been shown that the lattice sphere packing problem can be restricted to the space of well-rounded lattices also in general. A prominent feature of well-rounded lattices is that the set of shortest vectors span the ambient space. 
Previously, constructions of well-rounded lattices have been considered in, \emph{e.g.},  \cite{Taoufiq-Dave,Alves,costawell,bottcher2015lattices,bacher2015constructions}, and they also relate to the famous Minkowski and Woods conjectures \cite{McMullenMinkowski}.

One way to describe a lattice $\Lambda$ is via its theta series (see Section \ref{sec:preli} for a rigorous definition)
$$\Theta_\Lambda(q):=\sum_{\B{x}\in\Lambda} q^{\norm{\B{x}}^2},\quad |q|<1\,,$$
which is also closely related to the so-called \emph{flatness factor} utilized in wireless security \cite{Belfiore-flatness} and \emph{smoothing parameter} in cryptography \cite{Regev-smoothing}. In \cite{towards}, it was shown that well-rounded lattices yield promising candidates for lattice codes in wireless communications \cite{viterbo} and physical layer security \cite{Chorti}. This stems from the fact that certain optimization problems related to such code design can be restricted to the set of well-rounded lattices. This connection will be elaborated more in Section \ref{sec:app}. Here, let us just briefly mention that one such optimization problem is the minimization of the theta function \cite{Belfiore2010Secrecy-gain,towards}. However, computing the value of a theta function is far from easy, which  makes finding the minimum in general very hard. Indeed, the minimum is known only in few dimensions, and is not always the densest lattice packing \cite{Montgomery}.  To this end, if we look at a simple truncation of the theta function corresponding to the first minimum, it becomes evident that the minimization problem in this case can be formulated as simultaneously maximizing the first minimum (\emph{i.e.}, the lattice packing density) and minimizing the number of shortest vectors  (\emph{i.e.}, the kissing number). Unfortunately, there is no obvious tradeoff between these two goals. Namely, the densest packings tend to exhibit large kissing numbers. Hence, it is interesting to study dense \emph{generic well-rounded (GWR) lattices} having  minimal kissing numbers, as proposed in \cite{towards}. Naturally, the truncation approximation is coarse. In \cite{Amaro_approx}, a novel theta function approximation was derived, enabling efficient comparison of candidate lattice codes in the above light. 


Motivated by these findings, \emph{tame lattices} were introduced in \cite{damir2020bases}, providing a source for constructing explicit well-rounded lattices. In the present paper, we take a step further and show that tame lattices also give rise to generic well-rounded lattices. Moreover, we show that by distorting some of the densest lattice packings, it is possible to construct lattices that simultaneously have a large minimum distance and a small kissing number. Such lattices should provide ideal solutions for reliable and secure communications. More specifically, our main contributions are the following:

\begin{itemize}
    \item In Theorem \ref{thm:l2}, we state a stronger version of Theorem 4.9 in \cite{damir2020bases}, namely that a well-rounded lattice of the form $\LLL$ (see Definition \ref{def:l1}) is GWR or similar to $A_n$. 
    \item In Proposition \ref{prop:l2}, we prove that for the center densities it holds that  $$\delta(\Z^n)\leq\delta(\LLL)\leq\delta(A_n).$$
    \item In Theorem \ref{thm:4}, we give two conditions which tell when a well-rounded lattice $\LLL$ is similar to either $\Z^n$ or $A_n$. This is particularly interesting from the applications point-of-view, as discussed in Section  \ref{sec:conclusion}.
    \item In Theorem \ref{DualOfLinearLattices}, we show that the dual of a lattice of the form $\LLLL$ is of the same type.
    \item In Sections \ref{d_n} and \ref{e_8} we introduce \emph{deformed} lattices $D_n^\alpha$ and $E_8^\alpha$, where $\alpha$ is a real parameter, and in Theorems \ref{Dn_deformed:gwr} and \ref{conj} we prove that these lattices are GWR if they are not similar to $D_n$ and $E_8$, respectively. It is also shown that the center density can be brought arbitrarily close to the optimum. 
\end{itemize}

Let us next provide some mathematical motivation for the concept of tame lattices.


Let $p$ be an odd prime and let $K$ be a number field that is Galois over $\Q$ and such that ${\rm Gal}(K/\Q)\cong \Z/p\Z$. In  \cite[IV.8]{cp} Conner and Perlis showed that if $K$ is tame, \emph{i.e.}, no rational prime has wild ramification, 
then there exists an integral basis for $O_{K}$, which they called \emph{Lagrangian basis}, such that the Gram matrix of the integral trace in such basis is a $p \times p$ matrix of the form 
\[\left[
  \begin{array}{cccc}
    a & -h & \ldots & -h \\
   -h & \ a & \ddots & \vdots \\
    \vdots & \ddots & \ddots & -h \\
    -h & \ldots & -h & \ a \\
  \end{array}
\right]\,.\]
The values of $a$ and $h$ are  given by \[a=\frac{d(p-1)+1}{p}, \ \ h=\frac{d-1}{p},\] where $d$ is the conductor of $K$, \emph{i.e.}, the smallest positive integer such that $K$ is contained in the cyclotomic extension $\Q(\zeta_{d})$. Since $K$ is tame, $d$ is also equal to the product of the primes that ramify in $K$. Based on this explicit description of the trace form over $O_{K}$, in \cite{costawell} the authors have constructed families of well-rounded lattices that are sublattices of $O_{K}$. In \cite{damir2020bases}, the notion of tame lattice is introduced  by axiomatizing the key properties of the integral trace over degree $p$ Galois extensions. For instance notice that $a$ and $h$ satisfy $a -h(p-1)=1$ and $a,h \ge 0$. In \cite{damir2020bases} these lattices are used to  develop a procedure to construct well-rounded lattices, in fact strongly well-rounded, for which the previous results on Galois degree $p$ number fields case is just a particular example. In the present paper we explore the construction of tame lattices further. For instance, one of the key elements in the construction of strongly well-rounded lattices in \cite{damir2020bases} is the construction of a family of sublattices of the form \[\LL_{\T_{\B{v}_1},\B{v}_1}^{(r,s)},\]  for a given lattice $\LL$, see Definition \ref{LinearSubLattice} for details. Such construction is inspired by the study of the trace form over real number fields but its applications go beyond such fields. For instance we  show that several lattices, including 
$A_n, D_n, E_8$ and their duals, all arise as special cases of this construction. Furthermore we study the behavior of quantities such as the center density over such families, and exhibit examples of lattices  with large center density.  

\subsection{Organization}
The rest of the paper is organized as follows. In Section \ref{sec:preli}, we introduce the mathematical preliminaries needed in later sections and discuss application potential in secure wireless communications. In Section \ref{sec:tame} we study well-rounded sublattices of tame lattices. We give bounds for the center densities of such lattices and characterize them under certain conditions. In Section \ref{sec:gwr} we define deformed lattices $E_8^\alpha$ and $D_n^\alpha$, which are families of GWR lattices with density approaching to the optimum as $\alpha\to1$. In Section \ref{sec:conclusion}, we conclude this work and give directions for future research.

\section{Preliminaries}
\label{sec:preli}

In this section we define some basic concepts  that we will need later on, as well as discuss some connections to applications. Vectors will always be column vectors, and will be denoted by boldface letters. 

\subsection{Lattices}

A standard reference for the theory of lattices is the book \cite{conway}.

\begin{definition}
A lattice $\Lambda$ is a discrete subgroup of the additive group $(\Rn,+)$. Equivalently, it is a set 
$$\Lambda=\set{\sum_{i=1}^m u_i \B{b}_i : u_i\in \Z},$$ where $\set{\B{b}_1, \B{b}_2,\dots, \B{b}_m}$ is a set of linearly independent vectors in $\R^n$ called a basis of $\Lambda$, and $m$ is called the rank of $\Lambda$. If $m=n$, then $\Lambda$ is said to be full rank.
\end{definition}

All lattices considered in this text are full rank. The non-singular square matrix $M$ whose columns consist of the basis vectors $\B{b}_i$, $1\leq i\leq n$, is called a \emph{generator matrix} for the lattice $\Lambda$. The generator matrix is not unique: if $M$ is a generator matrix for $\Lambda$ and $U\in\Z^{n\times n}$ is a unimodular matrix, then $M'=MU$ is another generator matrix for $\Lambda$. We have the following two important examples of lattices frequently occuring in this article.

\begin{example}
  The orthogonal lattice $\Z^n$ is a lattice with a generator matrix $I_n$, the identity matrix.
\end{example}
\begin{example}
 The root lattice $A_n$ is a rank $n$ lattice in $\R^{n+1}$ defined by 
 $$A_n:=\set{(x_1,\dots,x_{n+1})^T\in\Z^{n+1}: \sum_{i=1}^{n+1}x_i=0}.$$ Basis vectors are given by $\B{v}_i=\B{e}_i-\B{e}_{i+1}$, for $1\leq i\leq n$, where the $\B{e}_i$ are standard basis vectors in $\R^{n+1}$. The lattice $A_n$ can be embedded as a full rank lattice in $\mathbb{R}^n$ as follows: take the cyclic shifts of the vector $\mathbf{v}(x,y)=(x,y,y,\cdots,y)\in \mathbb{R}^n$ where $x=\frac{\sqrt{n+1}+(n-1)}{n}$ and $y=\frac{\sqrt{n+1}-1}{n}$. Then it is easy to check that the Gram matrix (defined below) of this lattice is of the form $I_n+J_n$ where $I_n$ is the identity matrix and $J_n$ is the matrix of all ones. This is the Gram matrix of the $A_n$ lattice \cite{conway}.
\end{example}

Information about the inner products between basis vectors of a lattice is encoded in the \emph{Gram matrix}, which we define to be the symmetric and positive definite matrix $$G:=M^TM=(\B{b}_i^T \B{b}_j)_{1\leq i,j\leq n}.$$

Since the generator matrix of a lattice is not unique, neither is the Gram matrix. Indeed, if $M$ is a generator matrix for $\Lambda$ and $M'=MU$ is another generator matrix for some unimodular $U$, then the corresponding Gram matrices are related by $G'=M'^TM'=(MU)^TMU=U^TGU$. It is however the case that $\det(G')=\det(G)$, motivating the definition of the \emph{determinant} of a lattice, which we simply denote by $\det(\Lambda):=\det(G)$, where $G$ is any Gram matrix of $\Lambda$. We define the \emph{volume} (sometimes known as the \emph{co-volume}) of a lattice $\Lambda$ to be the Lebesgue measure of a fundamental domain for the translation action of $\Lambda$ on $\mathbb{R}^n$, or simply $\vol{\Lambda}:=\sqrt{\det(\Lambda)}$. For a full rank lattice this reduces to $\vol{\Lambda}=|\det(M)|$. If a lattice $\Lambda$ has a Gram matrix $G$ with integral entries, then the lattice is said to be \emph{integral}.

We are interested in certain \emph{sublattices} of tame lattices. What we mean by a sublattice of a lattice $\Lambda$ is a lattice $\Lambda'\subseteq\Lambda$. If $\Lambda'$ and $\Lambda$ have the same rank, then we have the following formula for the index of $\Lambda'$ in $\Lambda$: $$[\Lambda:\Lambda']=\frac{\vol{\Lambda'}}{\vol{\Lambda}}.$$

Two lattices $\Lambda$ and $\Lambda'$ with generator matrices $M$ and $M'$, respectively, are said to be \emph{similar}, denoted $\Lambda\sim \Lambda'$, if one is obtained from the other by a rotation and dilation: $M'=\alpha BMU$ for some $\alpha>0$, a real orthogonal matrix $B$ and a unimodular matrix $U\in\Z^{n\times n}$. If $\alpha=1$, we say that $\Lambda$ and $\Lambda'$ are \emph{isometric} and denote it by $\Lambda\cong\Lambda'$. Hence, two lattices are similar if and only if one is obtained from the other by rotation, reflection and scaling. The Gram matrices of similar lattices $\Lambda$ and $\Lambda'$, as above, are related by $G'=\alpha^2 U^TG U$. Consequently, the volumes of the lattices are related by $\vol{\Lambda'}=\alpha^n\vol{\Lambda}$.

The \emph{dual lattice} of $\Lambda\subseteq \mathbb{R}^n$ is the dual module $\text{Hom}_{\mathbb{Z}}(\Lambda,\mathbb{Z})$, which we view as a lattice in $\R^n$ as follows.

\begin{definition}
Let $\Lambda\subset\Rn$ be a lattice. Its dual lattice $\Lambda^*$ is defined as
$$\Lambda^*:=\set{\B{v}\in\Rn : \ideal{\B{v},\B{x}}\in\Z \text{ for any }\B{x}\in\Lambda}.$$
\end{definition}

\begin{rem}
\label{rem:dual}
    Given a basis $\{\mathbf{b}_1,\dots,\mathbf{b}_n\}$ of a lattice $\Lambda$, the dual lattice $\Lambda^*$ is generated by $\{\mathbf{b}_1^*,\dots, \mathbf{b}_n^*\}$ where $\mathbf{b}_i^*\cdot \mathbf{b}_j=\delta_{ij}$ for all $1\leq i,j\leq n$, where $\delta_{ij}$ denotes the Kronecker delta symbol. Thus, it is clear that the dual $\Lambda^*$ is generated by the matrix $(M^T)^{-1}$, where $M$ is any generator matrix for $\Lambda$. A Gram matrix for $\Lambda^*$ is then given by $G^{-1}$, where $G$ is any Gram matrix of $\Lambda$.
\end{rem}



An important lattice invariant is the \emph{shortest vector length}, or \emph{first minimum}. Given a lattice $\Lambda\subset\R^n$, we define it to be the norm of the shortest non-zero lattice vector, \emph{i.e.}, $$\lambda_1(\Lambda):=\min_{0\neq \B{x}\in \Lambda}\norm{\B{x}}.$$

Maximizing $\lambda_1(\Lambda)$ over the set of rank $n$ lattices $\Lambda\subset\R^n$ of volume 1 is in general a hard problem: this problem is equivalent to finding the densest lattice packing of spheres, and the solution is known only in dimensions 1--8 and 24. A commonly used measure of sphere packing density of a lattice packing is the \emph{center density}, which we define to be
$$\delta(\Lambda):=\frac{\lambda_1(\Lambda)^n}{2^n\vol{\Lambda}}.$$
The center density is invariant under lattice similarity. We can see from the very definition that $\delta(\Lambda)=2^{-n}\lambda_1(\Lambda)^n$ for a volume 1 lattice $\Lambda$, showing that maximizing the center density is equivalent to maximizing the shortest vector length. We will denote by $\delta_n$ the largest known center density of a lattice in dimension $n$. The set $S(\Lambda)=\set{\B{x}\in \Lambda: \norm{\B{x}}=\lambda_1(\Lambda)}$ is called the \emph{set of minimal vectors} of $\Lambda$ and its cardinality $\kappa(\Lambda):=|S(\Lambda)|$ is called  the \emph{kissing number} of the lattice.

We will be concerned with specific types of lattices called \emph{well-rounded} lattices.

\begin{definition}
A lattice $\Lambda\subset \Rn$
\begin{enumerate}[label={(\roman*)}]
  \item\label{wr} is called \emph{well-rounded (WR)} if the $\R$-span of $S(\Lambda)$ is $\Rn$.
  \item\label{swr} is called \emph{strongly well-rounded (SWR)} if the $\Z$-span of $S(\Lambda)$ is $\Lambda$.
  \item\label{mb} has a \emph{basis of minimal vectors} if there exists a set $\set{\B{v}_1,\dots,\B{v}_n}\subseteq S(\Lambda)$ whose $\Z$-span is $\Lambda$.
\item is called \emph{generic well-rounded (GWR)}  if it is well-rounded and has kissing number $\kappa(\Lambda)=2n$. 
\end{enumerate}
\end{definition}
A common example of a GWR lattice is $\Z^n$.

The \emph{theta series} of a lattice is defined as 
\begin{equation}\label{eq:theta}
\Theta_\Lambda(q):=\sum_{\B{x}\in\Lambda} q^{\norm{\B{x}}^2},
\end{equation}
where $q=e^{i\pi\tau}$, and $\Im{\tau}>0$. We are interested in the case $\tau=it$, \emph{i.e.} when $q$ is real. We can express the theta function equivalently as 
$$\Theta_\Lambda(q)=1+k_1q^{l_1^2}+k_2q^{l_2^2}+\dots,$$
where $l_i$, $i=1,2,\dots$ is the length of the $i$:th shortest vector in the lattice and $k_i:=|\set{\B{x}\in\Lambda: \norm{\B{x}}=l_i}|$. In particular, $l_1=\lambda_1(\Lambda)$ and $k_1=\kappa(\Lambda)$.

\subsection{Secure wireless communications}\label{sec:app}
In this section, we will provide a short practical motivation for the construction of generic well-rounded lattices with large packing densities. We refer the interested reader to \cite{viterbo,belfiore,Chorti,costa2017lattices} and references therein for more details on reliable and secure wireless communications. 

When communicating over a wireless channel,  reliability and security of the transmission are natural concerns. Lattice codes have manifested themselves as a good design tool for this purpose. A \emph{lattice code} is a finite collection of lattice vectors centered at the origin, often carved out by a hypersphere or a hypercube. The related design criteria may vary depending on the specific application and channel model. Over the wireless medium, the channel model can be described as
$$
\mathbf{y}=H\mathbf{x}+\mathbf{n}\in \Rn,
$$
where $\mathbf{x}\in \Lambda$ is the transmitted message, $\mathbf{n}\in \Rn$ is additive white Gaussian noise (AWGN), $H\in \R^{n\times n}$ is random fading following a certain (real, continuous) distribution, and $\mathbf{y}$ is the received distorted message. In the case of no fading, the channel is simply called an \emph{AWGN channel} and $H=I_n$. Another typical model is the \emph{Rayleigh fading channel}, which refers to the case where $H$ is diagonal with Rayleigh distributed independent entries. Maximum-likelihood (ML) decoding with this model corresponds to (bounded distance) closest lattice point search.

If the communication channel can be expected to be of a good quality, measured by \emph{signal-to-noise ratio (SNR)}, and we assume a Rayleigh fading channel, then reliability (under ML decoding) can be maximized by maximizing the \emph{modulation diversity} $L$ and \emph{minimum product distance} of the lattice \cite{viterbo}. The former is defined as $$L:=\min_{0\neq\B{x}\in\Lambda}|\set{i: x_i\neq0}|,$$ the minimum number of non-zero coordinates in a non-zero lattice vector, and the latter is defined as $$d_{p,min}(\Lambda):=\inf_{0\neq\B{x}\in\Lambda}\prod_{i=1}^{n}|x_i|$$ for a full diversity lattice with $L=n$. For a low signal quality, the minimum distance $\lambda_1(\Lambda)$  becomes crucial.

When considering security in addition to reliability, we choose a sublattice $\Lambda_s\subset \Lambda$ and the respective cosets represent our messages. In more detail, the lattice vector  $\mathbf{m}\in \Lambda$ is ``masked'' by a randomly chosen sublattice vector $r$, and the message  $\mathbf{x}=\mathbf{m}+\mathbf{r}\in \Lambda/\Lambda_s$ is sent through the channel. On this type of a \emph{wiretap channel} \cite{wyner, Wyner-Ozarow} we assume that the noise experienced by the eavesdropper is larger than that of the legitimate receiver. As a result, the legitimate receiver is able to decode the message with high probability, while the eavesdropper only gains negligible information. For more details on wiretap coset coding and illustrating examples, see \cite{Oggier-Sole-Belfiore}.

The security of the aforementioned wiretap lattice coding can be measured by the information leaked to the eavesdropper or alternatively by the eavesdropper's correct decoding probability. Both of these measures have been shown \cite{Belfiore-flatness,luzzi_isit16,towards}  to be bounded from above by the \emph{flatness factor}, which is defined by the deviation of the lattice Gaussian  probability density function $g_n$ from the uniform distribution on the Voronoi cell  $\mathcal{V}(\Lambda)$.  More rigorously, the flatness factor $\varepsilon_{\Lambda} (\sigma)$  is characterized by
\begin{equation}
\label{eq: flatness factor definition}
\varepsilon_{\Lambda} (\sigma) : = \max_{\mathbf{u} \in \R^n} \left| \frac{g_{n}(\Lambda + \mathbf{u}; \sigma)}{1/\vol{\Lambda}} - 1 \right|,
\end{equation}
where we can maximize over $\R^n$ by periodicity. One can think of this as the measure of ``how much better'' an eavesdropper can do than just random guessing when trying to intercept (decode) the sent message. The ``flatter'' the channel, the closer it is to a uniform observation from the eavesdropper's perspective. Therefore, a reasonable design goal is to minimize the flatness factor. 
We have the following useful equalities:
\begin{equation}
\label{eq: dual theta flatness factor formula}
\varepsilon_{\Lambda} (\sigma) 
=  \vol{\Lambda} g_{n} (\Lambda; \sigma) - 1 
= \frac{\vol{\Lambda}}{(\sqrt{2 \pi} \sigma)^n} \Theta_{\Lambda} (e^{-1/2 \sigma^2}) - 1 
= \Theta_{\Lambda^*} (e^{- 2 \pi \sigma^2}) - 1.
\end{equation}
Here, $\sigma^2=\frac{1}{2\pi t}$ is the variance of the Gaussian distribution related to the channel noise. 

It was proved in \cite{towards} that the minimizer of the flatness factor is a WR lattice. 
Now, the benefit from WR lattices is now multi-fold: they contain the maximizers of the minimum product distance, the densest lattice sphere packings, as well as the global minimizers of the flatness factor, which is closely related to the theta series as shown by Equation \eqref{eq: dual theta flatness factor formula}. For more details, see \cite[Sec. VI]{towards},  \cite{Sarnak-Strombergsson,coulangeon,delone}. Hence, well-rounded lattices make an excellent candidate space for reliable and secure wireless communications.

As we mentioned, the minimization of the theta function is in general a very hard problem. Hence, we can aim at studying its truncations. The longer the shortest vector and the smaller the kissing number, the smaller the dominating term in the theta series becomes (see Section \ref{sec:gwr} for more details). From this,  the question of how to construct GWR lattices with good packing densities has a clear practical motivation and acts as the driving force for the findings of this paper.

\section{Tame lattices}
\label{sec:tame}

In this section, we define the concept of a \emph{tame lattice}, which were introduced in \cite{damir2020bases}, and expand on the results in \cite{damir2020bases} concerning well-rounded sublattices of tame lattices. We give conditions under which well-rounded sublattices of tame lattices are generic, and develop bounds for the center densities of said sublattices. We end up with two conditions which tell when a sublattice is similar to either $\Z^n$ or $A_n$. We also give an explicit construction of $A_n$ as a sublattice of a tame lattice, and further, show that the dual lattice of a tame lattice is tame.

The term \emph{Lagrangian basis} was used by Conner and Perlis \cite[p. 193]{cp} to describe a normal integral basis of a Galois, tame number field of odd prime degree, such that the Gram matrix of the lattice obtained from this basis via the Minkowski embedding has a certain form. Later, it has been proven that such bases exist in a larger set of number fields. Particularly, when the number field is tame and has a prime conductor, such a basis has been shown to exist \cite{bm}. The following definition is given in \cite{damir2020bases}.

\begin{definition}
\label{defn:l1}
Let $n \geq1$ be an integer and $\LL\subset \R^n$ a lattice. We call the lattice $\LL$ tame if there exists a basis $\set{\B{e}_1,\dots,\B{e}_n}$ of $\LL$ and a non-zero vector $\B{v}_1\in\LL\cap\LL^*$ such that
\begin{enumerate}
  \item $\sum_{i=1}^n \B{e}_i=\B{v}_1$
  \item $\ideal{\B{e}_i,\B{v}_1}=1 \text{ for all }1\leq i\leq n$
  \item $\ideal{\B{e}_i,\B{e}_i}=a \text{ for all }1\leq i\leq n$
  \item $\ideal{\B{e}_i,\B{e}_j}=-h \text{ for all } 1\leq i\neq j\leq n.$
\end{enumerate} 
In this case, we call $\set{\B{e}_1,\dots,\B{e}_n}$ a Lagrangian basis for $\LL$.
\end{definition}


Conditions 3 and 4 give an explicit expression for the Gram matrix of a tame lattice with respect to the basis $\set{\B{e}_1,\dots,\B{e}_n}$:
\begin{equation}
\label{lag:gram}
G= \left[
  \begin{array}{cccc}
    a & -h & \ldots & -h \\
   -h & a & \ddots & \vdots \\
    \vdots & \ddots & \ddots & -h \\
    -h & \ldots & -h & a \\
  \end{array}
\right].\end{equation}

Moreover, the conditions imply that $a-h(n-1)=1$. Conversely, a basis with a Gram matrix of the form (\ref{lag:gram}) such that $a-h(n-1)=1$ holds, is Lagrangian. We can express the volume of a tame lattice using (\ref{lag:gram}).

\begin{lemma}
	\label{lem:l2}
	Let $\LL\subset \R^n$ be a tame lattice with a Lagrangian basis $\{\B{e}_1,\dots,\B{e}_{n}\}$, $a:=\ideal{\B{e}_1,\B{e}_1}$ and $h:=-\ideal{\B{e}_1,\B{e}_2}$. Then 
	$$\vol\LL=(a+h)^{\frac{n-1}{2}}.$$
\end{lemma}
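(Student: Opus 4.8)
The plan is to compute $\det(G)$ for the Gram matrix $G$ in \eqref{lag:gram} and then take the square root, using the constraint $a - h(n-1) = 1$ to simplify. First I would observe that $G$ can be written as $G = (a+h)I_n - h J_n$, where $J_n$ is the all-ones $n\times n$ matrix, since the diagonal entries are $(a+h) - h = a$ and the off-diagonal entries are $0 - h = -h$. This is a rank-one perturbation of a scalar matrix, so its eigenvalues are easy to read off: $J_n$ has eigenvalue $n$ with multiplicity $1$ (eigenvector the all-ones vector) and eigenvalue $0$ with multiplicity $n-1$. Hence $G$ has eigenvalue $(a+h) - hn = a - h(n-1)$ with multiplicity $1$ and eigenvalue $a+h$ with multiplicity $n-1$.

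Next I would use the defining relation $a - h(n-1) = 1$ (established in the excerpt right after \eqref{lag:gram}) to conclude that the first eigenvalue equals $1$. Therefore
\[
\det(\LL) = \det(G) = 1 \cdot (a+h)^{n-1} = (a+h)^{n-1}.
\]
Taking square roots and recalling the definition $\vol{\LL} = \sqrt{\det(\LL)}$ gives $\vol{\LL} = (a+h)^{(n-1)/2}$, which is exactly the claim. One should note that $a+h > 0$ for a genuine lattice (the Gram matrix is positive definite, so all eigenvalues, in particular $a+h$, are positive), so the square root is well defined and real.

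Alternatively, if one prefers to avoid eigenvalue language, the determinant of $G = (a+h)I_n - hJ_n$ can be computed directly by the matrix determinant lemma: $\det\bigl((a+h)I_n - h\,\mathbf{1}\mathbf{1}^T\bigr) = (a+h)^n\bigl(1 - \tfrac{h}{a+h}\,\mathbf{1}^T\mathbf{1}\bigr) = (a+h)^{n}\bigl(1 - \tfrac{hn}{a+h}\bigr) = (a+h)^{n-1}(a+h-hn) = (a+h)^{n-1}$, again using $a - h(n-1) = 1$. Either route is short. I do not anticipate a real obstacle here; the only thing to be careful about is invoking the correct sign/positivity of $a+h$ so that $\vol{\LL}$ is stated as a positive real number, and making sure the exponent bookkeeping ($n$ versus $n-1$) is done correctly — the single "special" eigenvalue collapsing to $1$ is what drops the exponent from $n$ to $n-1$.
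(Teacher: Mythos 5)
Your proof is correct and follows essentially the same route as the paper: compute $\det(G)$ for the Gram matrix in (\ref{lag:gram}) and simplify with the relation $a-h(n-1)=1$, then take the square root. The only difference is that you justify the determinant formula (via the eigenvalues of the rank-one perturbation $(a+h)I_n-hJ_n$, or the matrix determinant lemma) where the paper simply quotes it as well known, which is a fine addition but not a different argument.
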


\begin{proof} 

Consider the matrix $M_n(x,y)=xI_n+yJ_n$ where $x,y\in\mathbb{R}$ and $J_n$ is the matrix of all ones. If $\mathbf{v}\in\mathbb{R}^n\setminus\{0\}$ is an eigenvector for $M_n(x,y)$ with eigenvalue $\lambda$, then $M_n(x,y)\mathbf{v}=x\mathbf{v}+y \langle \mathbf{v},\mathbf{u}\rangle \mathbf{u}=\lambda\mathbf{v}$, where $\mathbf{u}$ is the vector of all ones in the standard basis. This shows that either $\langle \mathbf{v},\mathbf{u}\rangle=0$ and $\lambda=x$ is an eigenvalue with multiplicity $n-1$ or $\mathbf{v}$ is parallel to $\mathbf{u}$ and $\lambda=x+yn$ is an eigenvalue with multiplicity one. Therefore, the determinant equals $\det(M_n(x,y))=(x+yn)x^{n-1}$. If we plug in $x=a+h$ and $y=-h$ we obtain the determinant of (\ref{lag:gram}) as
$$\det(G)=(a+h)^{n-1}(a+h(1-n)).$$
Since $a+h(1-n)=1$,
$$\vol\LL=\sqrt{\det(G)}=\sqrt{(a+h)^{n-1}}=(a+h)^{\frac{n-1}{2}},$$
as desired. \end{proof}

\begin{rem}
	\label{rem:1}
	If $\LL\subset\R^n$ is a tame lattice with a Gram matrix $G$, then since $G$ is a positive definite matrix, we have $\det(G)=(a+h)^{n-1}>0$ and in particular, the inequality $a+h>0$ and equation $a+h(1-n)=1$ imply $a>\frac{1}{n}$ and $h>-\frac{1}{n}$.
\end{rem}

\subsection{Well-rounded sublattices}

In this section, we present some of the definitions and results in \cite{damir2020bases}, since our goal is to expand on these results. We recall how one can produce full-rank sublattices, and particularly tame lattices, via a specific linear map. We also restate the main theorem presented in \cite{damir2020bases}.

\begin{rem}\label{Motivation def r,s,T}
The motivation behind the following definition of $\LLLL:=\Phi_{(r,s)}(\LL)$ comes from number theory. Let $p$ be an odd prime and let $K$ be a Galois degree $p$ number field, where $p$ is unramified. 
For an integer $m \equiv 1 \pmod{p}$ it turns out the ring of integers $O_{K}$, seen as a lattice via the Minkowski embedding, contains a sub-lattice $O_{m}$ that has a minimal basis. Such sub-lattice is a particular instance of $\LLLL$; in this case $\LL$ is $O_{K}$, $\B{v}_1=1$, $T$ is the trace map, $r=1$ and $s=\frac{m-1}{p}$. For details, see Section 1.1 and Lemma 3.8 in 
\cite{damir2020bases}. 
\end{rem}

\begin{definition}\label{LinearSubLattice}
\label{def:l1}
Let $\LL\subset \R^n$ be a lattice and $\T:\LL\rightarrow\Z$ a non-trivial linear map. Let $r,s$ be integers, $\B{v}_1\in \LL\setminus\ker \T$ and $m:=r+s\T(\B{v}_1)$. Define $\Phi_{(r,s)}:\LL\rightarrow\LL$ to be the linear map
$$ \B{x}\mapsto r\B{x}+s\T(\B{x})\B{v}_1.$$
Define the lattice $\LLLL$ to be the image of $\LL$ under the map $\Phi_{(r,s)}$:
$$\LLLL:=\Phi_{(r,s)}(\LL).$$
\end{definition}

\begin{rem}\label{Condition r m} It is clear that $\LLLL$ is a sublattice of $\LL$. Further, if $r$ and $m$ are non-zero, then $\Phi_{(r,s)}$ turns out to be an injection. This can be achieved for instance when $0<|r|<|\T(\B{v}_1)|$, whence we get the following result (Corollary 3.2 in \cite{damir2020bases}).
\end{rem}

\begin{lemma}
\label{lem:l1}
Assume that $\LL$, $\T$ and $\B{v}_1$ are as in Definition \ref{def:l1} and $r,s$ are integers such that $0<|r|<|\T(\B{v}_1)|$. Then $\LL_{\T,\B{v}_1}^{(r,s)}$ is a full rank sublattice of $\LL$.
\end{lemma}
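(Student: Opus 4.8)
The claim is that under the hypotheses of Definition~\ref{def:l1} together with $0<|r|<|\T(\B{v}_1)|$, the lattice $\LL_{\T,\B{v}_1}^{(r,s)}=\Phi_{(r,s)}(\LL)$ is a full rank sublattice of $\LL$. It is already observed in the text that $\LLLL\subseteq\LL$, so the only thing to establish is that it has rank $n$, i.e.\ that $\Phi_{(r,s)}$ is injective (equivalently, that it is an isomorphism onto its image as a $\Z$-module, since $\LL\cong\Z^n$). The plan is to show that the linear map $\Phi_{(r,s)}:\R^n\to\R^n$ extending $\Phi_{(r,s)}$ has nonzero determinant under the stated inequality.

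First I would compute the determinant of $\Phi_{(r,s)}$ directly. Write $\Phi_{(r,s)}(\B{x})=r\B{x}+s\T(\B{x})\B{v}_1$. Since $\T:\LL\to\Z$ is linear, it extends to a linear functional on $\R^n$, so $\B{x}\mapsto\T(\B{x})\B{v}_1$ is a rank-one linear map whose matrix is $\B{v}_1\B{t}^{T}$ for the appropriate row vector $\B{t}$ representing $\T$; hence the matrix of $\Phi_{(r,s)}$ is $rI_n+s\,\B{v}_1\B{t}^{T}$. By the matrix determinant lemma (rank-one update),
\begin{equation}
\label{eq:det-Phi}
\det\Phi_{(r,s)}=\det(rI_n+s\,\B{v}_1\B{t}^{T})=r^{n}\Bigl(1+\tfrac{s}{r}\,\B{t}^{T}\B{v}_1\Bigr)=r^{n-1}\bigl(r+s\,\T(\B{v}_1)\bigr)=r^{n-1}m,
\end{equation}
where $m=r+s\T(\B{v}_1)$ is exactly the quantity named in Definition~\ref{def:l1}. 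So $\Phi_{(r,s)}$ is injective precisely when $r\neq0$ and $m\neq0$, and this injectivity of the real-linear map forces injectivity on the sublattice $\LL$, whence $\Phi_{(r,s)}(\LL)$ is a rank-$n$ (full rank) sublattice of $\LL$.

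It then remains to check that the hypothesis $0<|r|<|\T(\B{v}_1)|$ guarantees $r\neq0$ and $m\neq0$. The first is immediate. For the second, note that $\T(\B{v}_1)\in\Z$ and $\B{v}_1\notin\ker\T$ gives $\T(\B{v}_1)\neq0$, so $|\T(\B{v}_1)|\geq1$ and the inequality is meaningful; moreover $s$ is an integer. If $s=0$ then $m=r\neq0$. If $s\neq0$ then $|s\,\T(\B{v}_1)|\geq|\T(\B{v}_1)|>|r|$, so $|m|=|r+s\T(\B{v}_1)|\geq|s\T(\B{v}_1)|-|r|>0$ by the reverse triangle inequality; in either case $m\neq0$. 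Combining with \eqref{eq:det-Phi} yields $\det\Phi_{(r,s)}=r^{n-1}m\neq0$, completing the argument.

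I do not expect a genuine obstacle here; the only mild subtlety is being careful that $\Phi_{(r,s)}$ was \emph{defined} as a map $\LL\to\LL$, so one should explicitly pass to its $\R$-linear extension to talk about $\det$, and then note that a real-linear injection restricts to a $\Z$-linear injection on the full-rank lattice $\LL$, so that $\Phi_{(r,s)}(\LL)$ is discrete of rank $n$ and hence a full-rank sublattice. One could alternatively avoid determinants entirely: if $\Phi_{(r,s)}(\B{x})=\B{0}$ then $r\B{x}=-s\T(\B{x})\B{v}_1$; applying $\T$ gives $r\,\T(\B{x})=-s\,\T(\B{x})\,\T(\B{v}_1)$, i.e.\ $\T(\B{x})\,m=0$, so $\T(\B{x})=0$ (as $m\neq0$), and then $r\B{x}=\B{0}$ forces $\B{x}=\B{0}$ (as $r\neq0$). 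This second route is perhaps cleaner for the write-up, but the determinant computation \eqref{eq:det-Phi} has the advantage of also recording the index $[\LL:\LLLL]=|r^{n-1}m|$, which is likely wanted later.
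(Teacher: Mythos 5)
Your proof is correct; the paper does not prove this lemma itself (it cites Corollary 3.2 of Damir \emph{et al.}), and the remark preceding it — that $\Phi_{(r,s)}$ is injective whenever $r$ and $m=r+s\T(\B{v}_1)$ are nonzero, which the hypothesis $0<|r|<|\T(\B{v}_1)|$ guarantees — is exactly the argument you give. Your determinant computation $\det\Phi_{(r,s)}=r^{n-1}m$ is a nice bonus, since the index $[\LL:\LLLL]=|mr^{n-1}|$ is indeed used later (Proposition \ref{prop:rigid_basis}).
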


If the conditions in the above lemma are satisfied and if $\{\B{e}_1,\dots,\B{e}_n\}$ is a basis for $\LL$, then $\set{\Phi_{(r,s)}(\B{e}_1),\dots,\Phi_{(r,s)}(\B{e}_n)}$ is a basis for $\LLLL$. We will be particularly interested in the following linear map:
$$\T_{\B{v}_1}:\LL\rightarrow\Z,\quad \T_{\B{v}_1}(\B{x})=\ideal{\B{x},\B{v}_1},$$
where $0\neq \B{v}_1\in\LL\cap\LL^*$. We define $$\LL_{\B{v}_1}^{(r,s)}:=\LL_{\T_{\B{v}_1},\B{v}_1}^{(r,s)}.$$

In \cite{damir2020bases}, the authors give a condition which tells when $\LLL$ is well-rounded, and in fact, possesses a minimal basis. If the condition is met, then the index $[\LLL:\LL]$ and the lattice minimum of $\LLL$ are given. More specifically:

\begin{theorem}
\label{thm:l1}\cite[Theorem 4.9]{damir2020bases}
Let $n\geq2$ be an integer and $\mathcal{L}\subset\R^n$ a tame lattice with a Lagrangian basis $\set{\B{e}_1,\dots,\B{e}_n}$. Let $a:=\ideal{\B{e}_1,\B{e}_1}$ and $h:=-\ideal{\B{e}_1,\B{e}_2}$. Let $r,s$ be integers such that $0\neq |r|<n$ and let $m=r+sn$. Suppose that 
\begin{equation}
\label{bounds}
\frac{na-1}{n^2-1}\leq \left(\frac{m}{r}\right)^2\leq\frac{(na-1)(n+1)}{n-1}.
\end{equation}
Then $\LL_{\B{v}_1}^{(r,s)}$ is a full rank sublattice of $\LL$ of index $|mr^{n-1}|$, with minimum 
$$\lambda_1^2(\LL_{\B{v}_1}^{(r,s)})=ar^2+\frac{m^2-r^2}{n}$$
and a basis of minimal vectors $\{r\B{e}_1+s\B{v}_1,\dots,r\B{e}_n+s\B{v}_1 \}$.
\end{theorem}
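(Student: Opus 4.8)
The plan is to analyze the image lattice $\LL_{\B{v}_1}^{(r,s)} = \Phi_{(r,s)}(\LL)$ directly in terms of the Lagrangian basis. First I would record that $\B{v}_1 = \sum_i \B{e}_i$ and $\T_{\B{v}_1}(\B{x}) = \ideal{\B{x},\B{v}_1}$, so that $\T_{\B{v}_1}(\B{e}_i) = \ideal{\B{e}_i,\B{v}_1} = 1$ for every $i$ by condition (2) of the Lagrangian basis, and hence $\T_{\B{v}_1}(\B{v}_1) = n$. Thus $m = r + sn$ as stated, and $\Phi_{(r,s)}(\B{e}_i) = r\B{e}_i + s\B{v}_1 =: \B{f}_i$. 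Since $0 \neq |r| < n = |\T_{\B{v}_1}(\B{v}_1)|$, Lemma \ref{lem:l1} applies and guarantees that $\{\B{f}_1,\dots,\B{f}_n\}$ is a basis for a full rank sublattice. For the index, I would compute $\det(\Phi_{(r,s)})$ as a linear map: writing $\Phi_{(r,s)} = rI + s\,\B{v}_1 \B{v}_1^{\T} G$ (or more cleanly, working in the $\B{e}_i$-coordinates where $\B{v}_1 = (1,\dots,1)^{\T}$ and $\T_{\B{v}_1}$ has coordinate vector $(1,\dots,1)$), the matrix of $\Phi_{(r,s)}$ is $rI_n + s J_n$ where $J_n$ is the all-ones matrix; this has eigenvalue $r + sn = m$ once and $r$ with multiplicity $n-1$, so $\det = m r^{n-1}$ and $[\LL_{\B{v}_1}^{(r,s)}:\LL] = |mr^{n-1}|$ (equivalently $[\LL:\LL_{\B{v}_1}^{(r,s)}]$ in the notation of the index formula — I would state it as the paper does).

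Next I would compute the Gram matrix of the new basis. Using bilinearity, $\ideal{\B{f}_i,\B{f}_j} = r^2\ideal{\B{e}_i,\B{e}_j} + rs\ideal{\B{e}_i,\B{v}_1} + rs\ideal{\B{v}_1,\B{e}_j} + s^2\ideal{\B{v}_1,\B{v}_1}$. Since $\ideal{\B{e}_i,\B{v}_1} = 1$ and $\ideal{\B{v}_1,\B{v}_1} = \T_{\B{v}_1}(\B{v}_1) = n$ (using $\B{v}_1 \in \LL \cap \LL^*$, or just summing the entries of $G$, which gives $a - (n-1)h = 1$ per row, total $n$), this yields the diagonal entries $\ideal{\B{f}_i,\B{f}_i} = ar^2 + 2rs + s^2 n$ and off-diagonal entries $\ideal{\B{f}_i,\B{f}_j} = -hr^2 + 2rs + s^2 n$ for $i \neq j$. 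In particular the new Gram matrix is again of the "tame shape" $\alpha I_n + \beta J_n$ up to the constant shift: precisely $G' = r^2 G + (2rs + s^2 n) J_n$, and since $G = (a+h)I_n - hJ_n + $ ... — more usefully, $G'$ has the form $\mu I_n - \nu J_n$ with $\mu = r^2(a+h)$ and the off-diagonal/diagonal structure determined by $a, h, r, s, n$. The squared length of each basis vector $\B{f}_i$ is then $ar^2 + 2rs + s^2n = ar^2 + \frac{(r+sn)^2 - r^2}{n} = ar^2 + \frac{m^2 - r^2}{n}$, which is the claimed value of $\lambda_1^2$, assuming these $\B{f}_i$ are indeed minimal.

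The main obstacle — and the heart of the theorem — is proving that $\{\B{f}_1,\dots,\B{f}_n\}$ actually consists of minimal vectors, i.e. that no nonzero vector of $\LL_{\B{v}_1}^{(r,s)}$ is shorter than $ar^2 + \frac{m^2-r^2}{n}$, and this is exactly where the hypothesis (\ref{bounds}) must be used. I would take an arbitrary nonzero $\B{x} = \sum_i c_i \B{f}_i$ with $c_i \in \Z$, and express $\norm{\B{x}}^2 = \B{c}^{\T} G' \B{c}$ as a quadratic form in $\B{c}$; using the $\mu I_n - \nu J_n$ structure this becomes $\mu \sum c_i^2 - \nu (\sum c_i)^2$ plus cross terms, which after completing the square separates into a component along $(1,\dots,1)$ and a component orthogonal to it. The two-sided bound in (\ref{bounds}) is precisely the condition ensuring that the minimum of this quadratic form over nonzero integer vectors is attained at a standard basis vector $\B{e}_i$ (equivalently $c = \B{e}_i$), by balancing the "radial" coefficient against the "orthogonal" coefficient: the left inequality prevents vectors like $\B{f}_i - \B{f}_j$ (or combinations summing to zero) from being shorter, and the right inequality prevents vectors like $\B{v}_1$-direction vectors (with $\sum c_i \neq 0$ small) from being shorter. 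Since this is Theorem 4.9 of \cite{damir2020bases}, I would cite that the detailed case analysis — bounding $\norm{\B{x}}^2$ from below by $ar^2 + \frac{m^2-r^2}{n}$ for all integer tuples $\B{c} \neq 0$ and checking the two regimes $\sum c_i = 0$ and $\sum c_i \neq 0$ separately — is carried out there, and that well-roundedness follows immediately once we exhibit $n$ linearly independent minimal vectors $\B{f}_1,\dots,\B{f}_n$ spanning $\LL_{\B{v}_1}^{(r,s)}$, which by definition makes it even strongly well-rounded with a basis of minimal vectors.
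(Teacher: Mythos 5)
Your proposal is consistent with how the paper treats this statement: the paper gives no proof of Theorem~\ref{thm:l1} at all, but simply quotes it as Theorem~4.9 of \cite{damir2020bases}, so deferring the hard minimality case analysis to that reference is exactly what the paper does. The parts you do verify are correct and match computations the paper uses elsewhere: the coordinate matrix $rI_n+sJ_n$ of $\Phi_{(r,s)}$ with $\det = mr^{n-1}$ reproduces the index claim (cf.\ Proposition~\ref{prop:rigid_basis}), and your Gram entries $\ideal{\B{f}_i,\B{f}_i}=ar^2+\frac{m^2-r^2}{n}$, $\ideal{\B{f}_i,\B{f}_j}=-hr^2+\frac{m^2-r^2}{n}$ are precisely those appearing in the proof of Theorem~\ref{thm:4}.

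One substantive correction to your heuristic, which you would need to fix if you ever fleshed out the sketch: you have the roles of the two inequalities in (\ref{bounds}) reversed. The \emph{upper} (right-hand) bound is what rules out the sum-zero competitors: since $\B{f}_i-\B{f}_j=r(\B{e}_i-\B{e}_j)$ has squared length $2(a+h)r^2=\frac{2(na-1)r^2}{n-1}$, the condition $ar^2+\frac{m^2-r^2}{n}\le \frac{2(na-1)r^2}{n-1}$ simplifies exactly to $\left(\frac{m}{r}\right)^2\le\frac{(na-1)(n+1)}{n-1}$; equality is the $A_n$ case of Theorem~\ref{thm:4}\ref{thm4b}, where these differences join the minimal set and the kissing number jumps to $n(n+1)$. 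The \emph{lower} (left-hand) bound is what rules out the $\B{v}_1$-direction vector: $\sum_i\B{f}_i=m\B{v}_1$ has squared length $m^2n$ (since $\ideal{\B{v}_1,\B{v}_1}=n$), and $ar^2+\frac{m^2-r^2}{n}\le m^2n$ simplifies exactly to $\left(\frac{m}{r}\right)^2\ge\frac{na-1}{n^2-1}$. Since you ultimately cite \cite{damir2020bases} for the full case analysis this does not invalidate your write-up, but as stated your explanation of why the hypothesis is needed points at the wrong competitors.
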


In fact a stronger result is true: if the upper bound in (\ref{bounds}) is strict, then $\LLL$ is GWR. Namely, in this case, \cite[Cor. 4.8]{damir2020bases} shows that the vectors $\Phi_{(r,s)}(\B{e}_i)$ have strictly smaller length than the vectors coming from the sublattice $\mathcal{L}^0$ of vectors of trace zero (since the upper bound is equivalent to $Aa+B\leq 2A(a+h)$). Moreover, \cite[Prop. 4.7]{damir2020bases}  shows that the vectors $\Phi_{(r,s)}(\B{e}_i)$ also have smaller length than the vectors $\Phi_{(r,s)}(\B{v})$, where $\B{v}\in S_d=\{\B{x}\in \mathcal{L}:T(\B{x})=d,\, d\neq 0\}$.


\begin{theorem}
\label{thm:l2}
Let $n\geq 2$ be an integer and let $\mathcal{L}\subset\R^n$ be a tame lattice with a Lagrangian basis $\set{\B{e}_1,\dots,\B{e}_n}$. Let $a:=\ideal{\B{e}_1,\B{e}_1}$ and $h:=-\ideal{\B{e}_1,\B{e}_2}$. Let $r,s$ be integers such that $0\neq |r|<n$ and let $m=r+sn$. Suppose that 
$$\frac{na-1}{n^2-1}\leq \left(\frac{m}{r}\right)^2<\frac{(na-1)(n+1)}{n-1}.$$
Then $\LL_{\B{v}_1}^{(r,s)}$ is GWR.
\end{theorem}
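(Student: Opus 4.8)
First I would note that Theorem~\ref{thm:l1} already supplies almost everything: $\LLL$ is well-rounded with $\lambda_1^2(\LLL)=ar^2+\frac{m^2-r^2}{n}$, and $\set{r\B{e}_1+s\B{v}_1,\dots,r\B{e}_n+s\B{v}_1}=\set{\Phi_{(r,s)}(\B{e}_1),\dots,\Phi_{(r,s)}(\B{e}_n)}$ is a basis of minimal vectors. Being linearly independent, this set forces $\Phi_{(r,s)}$ to be injective, so the $2n$ vectors $\pm\Phi_{(r,s)}(\B{e}_i)$, $1\leq i\leq n$, are pairwise distinct minimal vectors of $\LLL$. Hence the whole statement reduces to the bound $\kappa(\LLL)\leq 2n$, i.e.\ to showing that these are the \emph{only} minimal vectors; equivalently, that $\norm{\Phi_{(r,s)}(\B{x})}^2>\lambda_1^2(\LLL)$ for every $\B{x}\in\LL$ that is not one of the $\pm\B{e}_i$.

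The plan is to make $\norm{\Phi_{(r,s)}(\B{x})}^2$ completely explicit. Writing $\B{x}=\sum_{i=1}^n x_i\B{e}_i$ with $x_i\in\Z$ and setting $N:=\sum_i x_i^2$, $\sigma:=\sum_i x_i=\T_{\B{v}_1}(\B{x})$, and using $\Phi_{(r,s)}(\B{x})=r\B{x}+s\sigma\B{v}_1$, the Gram matrix~(\ref{lag:gram}), $\ideal{\B{e}_i,\B{v}_1}=1$, $\ideal{\B{v}_1,\B{v}_1}=n$, $a-(n-1)h=1$ and $m=r+sn$, a routine simplification gives
\[
\norm{\Phi_{(r,s)}(\B{x})}^2=PN+Q\sigma^2,\qquad P:=r^2(a+h),\quad Q:=\frac{m^2-r^2}{n}-r^2h .
\]
Everything then hinges on the identities $P+nQ=m^2$ and $P+Q=\lambda_1^2(\LLL)$, on $P>0$ (Remark~\ref{rem:1} and $r\neq0$), and on the translations: $Q\geq 0\iff(m/r)^2\geq a+h$; $P>Q\iff(m/r)^2<\tfrac{(na-1)(n+1)}{n-1}$ (the strict upper bound); $Q\geq -m^2\iff(m/r)^2\geq\tfrac{na-1}{n^2-1}$ (the lower bound). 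So the hypotheses amount precisely to $-m^2\leq Q<P$.

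Now the goal is $PN+Q\sigma^2>P+Q$ for every nonzero $\B{x}\in\Z^n$ with $N\geq 2$ (the case $N=1$ forces $\B{x}=\pm\B{e}_k$ and gives equality). I would use only the elementary integer bounds $\sigma^2\leq nN$ (Cauchy--Schwarz) and $\sigma^2\leq N^2$ (from $|x_i|\leq x_i^2$). If $Q\geq 0$, then $PN+Q\sigma^2\geq PN\geq 2P>P+Q$ by $P>Q$. If $Q<0$, the target is $P(N-1)>|Q|(\sigma^2-1)$; it is immediate when $\sigma^2\leq 1$; for $2\leq N\leq n$ the bound $\sigma^2\leq N^2$ reduces it to $P>|Q|(N+1)$, which is automatic for $N\leq n-1$ since $P>n|Q|$ (as $P+nQ=m^2>0$) and for $N=n$ becomes $Q>-m^2$; and for $N\geq n+1$ the bound $\sigma^2\leq nN$ reduces it, via $P+nQ=m^2$, to $Nm^2>\lambda_1^2(\LLL)$, which holds because $N\geq n+1$ and $(m/r)^2\geq\tfrac{na-1}{n^2-1}>\tfrac{na-1}{n^2+n-1}$.

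I expect the main obstacle to be exactly the leftover case $N=n$ with $\sigma^2$ maximal, i.e.\ $\B{x}=\pm\B{v}_1$: there $\norm{\Phi_{(r,s)}(\B{v}_1)}^2=\norm{m\B{v}_1}^2=m^2n$, and $m^2n=\lambda_1^2(\LLL)$ precisely when $(m/r)^2$ attains the lower bound of~(\ref{bounds}). So the borderline case is $\B{x}=\pm\B{v}_1$, and the lower-bound hypothesis is exactly what keeps $\Phi_{(r,s)}(\B{v}_1)$ off the minimal sphere (and it must be invoked in strict form; any other $\B{x}$ with $N=n$ has $\sigma^2\leq(n-2)^2$ and is handled unconditionally). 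This is the mirror image of the upper end: when $(m/r)^2$ attains the upper bound one has $P=Q$, the vectors $\Phi_{(r,s)}(\B{e}_i-\B{e}_j)$ land on the minimal sphere, the Gram matrix of $\LLL$ in the basis $\set{\Phi_{(r,s)}(\B{e}_i)}$ becomes $P$ times that of $A_n$, and $\LLL\sim A_n$ — which is why a strict upper bound is imposed. Once the $\B{v}_1$ case is cleared, the routine cases above show $\pm\Phi_{(r,s)}(\B{e}_i)$ exhaust $S(\LLL)$, whence $\kappa(\LLL)=2n$ and $\LLL$ is GWR.
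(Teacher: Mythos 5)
Your reduction is sound, and since the paper itself omits the proof (deferring to the analogue of Theorem 4.9 in \cite{damir2020bases}), your explicit argument is essentially the intended one: writing $\norm{\Phi_{(r,s)}(\B{x})}^2=PN+Q\sigma^2$ with $P=r^2(a+h)$, $Q=\frac{m^2-r^2}{n}-r^2h$, the identities $P+nQ=m^2$ and $P+Q=\lambda_1^2(\LLL)$, your translations of the two bounds, and the case analysis via $\sigma^2\le nN$ and $\sigma^2\le N^2$ (including the observation that any $\B{x}\ne\pm\B{v}_1$ with $N=n$ has $\sigma^2\le(n-2)^2$, and that the case $N\ge n+1$ reduces to $(m/r)^2>\frac{na-1}{n^2+n-1}$) all check out.

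The one genuine problem is the point you relegate to a parenthesis: your argument needs $Q>-m^2$, i.e.\ the \emph{strict} inequality $(m/r)^2>\frac{na-1}{n^2-1}$, whereas the theorem as stated assumes only the non-strict one. This is not a removable blemish in your write-up, because at equality the conclusion actually fails: $\norm{\Phi_{(r,s)}(\B{v}_1)}^2=nm^2$ equals $\lambda_1^2(\LLL)=ar^2+\frac{m^2-r^2}{n}$ exactly when $(m/r)^2=\frac{na-1}{n^2-1}$, so at the left endpoint $\pm m\B{v}_1$ are additional minimal vectors and $\kappa(\LLL)\ge 2n+2$. Concretely, take $n=3$, $\LL=\Z^3$ (so $a=1$, $h=0$), $(r,s)=(2,-1)$, hence $m=-1$ and $(m/r)^2=\frac{1}{4}=\frac{na-1}{n^2-1}<4=\frac{(na-1)(n+1)}{n-1}$; the hypotheses of the theorem hold, yet $\LL_{\B{v}_1}^{(2,-1)}=\set{\B{x}\in\Z^3: x_1\equiv x_2\equiv x_3\pmod 2}$ is the body-centered cubic lattice ($\sim A_3^*$) with $\lambda_1^2=3$ and kissing number $8>2n=6$. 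So the statement itself needs the left inequality to be strict (the left endpoint plays for $A_n^*$ the role that the right endpoint plays for $A_n$ in Theorem \ref{thm:4}); your proof is a correct proof of that amended statement, but you should state the amendment explicitly rather than silently invoking strictness of a hypothesis you were not given.
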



An interesting observation is that, when the upper bound in (\ref{bounds}) is achieved, then $\LLL$ has the same kissing number as $A_n$, which is $n(n+1)$. In particular, $\LLL$ is no longer GWR. Later (in Theorem \ref{thm:4}) we will show that $\LLL$ is actually similar to $A_n$ in this case.


\subsection{Center densities of sublattices of tame lattices}

Our goal in this section is to characterize the center densities of the lattices $\LLL$ in \ref{def:l1} and develop bounds for them. The following proposition gives an explicit formula for the center density.

\begin{proposition}
\label{prop:l1}
Let $n\geq 2$ be an integer and let $\mathcal{L}\subset\R^n$ be a tame lattice with a Lagrangian basis $\set{\B{e}_1,\dots,\B{e}_n}$. Let $a:=\ideal{\B{e}_1,\B{e}_1}$ and $h:=-\ideal{\B{e}_1,\B{e}_2}$. Let $r,s$ be integers such that $0\neq |r|<n$ and let $m=r+sn$. Suppose that 
$$\frac{na-1}{n^2-1}\leq \left(\frac{m}{r}\right)^2\leq\frac{(na-1)(n+1)}{n-1}.$$ Then the center density of $\LL_{\B{v}_1}^{(r,s)}$ is given by
$$\delta(\LL_{\B{v}_1}^{(r,s)})=\frac{((na-1)r^2+m^2)^{n/2}}{2^n n^{n/2} (a+h)^{\frac{n-1}{2}}  |mr^{n-1}|} .$$
\end{proposition}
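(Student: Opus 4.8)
The plan is to assemble the formula directly from two ingredients already available: the description of $\LL_{\B{v}_1}^{(r,s)}$ in Theorem~\ref{thm:l1} and the volume of a tame lattice in Lemma~\ref{lem:l2}. Since the hypotheses here are exactly those of Theorem~\ref{thm:l1}, that theorem applies and gives both the index $[\LL:\LL_{\B{v}_1}^{(r,s)}]=|mr^{n-1}|$ and the squared minimum $\lambda_1^2(\LL_{\B{v}_1}^{(r,s)})=ar^2+\frac{m^2-r^2}{n}$. These are the only nontrivial inputs; everything else is bookkeeping with the definition of center density.

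First I would compute the volume of $\LL_{\B{v}_1}^{(r,s)}$. Because $\LL_{\B{v}_1}^{(r,s)}\subseteq\LL$ is a full rank sublattice, the index formula $[\LL:\LL_{\B{v}_1}^{(r,s)}]=\vol(\LL_{\B{v}_1}^{(r,s)})/\vol\LL$ together with Lemma~\ref{lem:l2} yields
$$\vol(\LL_{\B{v}_1}^{(r,s)})=|mr^{n-1}|\,(a+h)^{\frac{n-1}{2}}.$$
Next I would simplify the minimum: clearing denominators,
$$\lambda_1^2(\LL_{\B{v}_1}^{(r,s)})=ar^2+\frac{m^2-r^2}{n}=\frac{(na-1)r^2+m^2}{n},$$
so that $\lambda_1(\LL_{\B{v}_1}^{(r,s)})^n=\big((na-1)r^2+m^2\big)^{n/2}/n^{n/2}$.

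Finally I would substitute both expressions into $\delta(\Lambda)=\lambda_1(\Lambda)^n/(2^n\vol\Lambda)$:
$$\delta(\LL_{\B{v}_1}^{(r,s)})=\frac{\big((na-1)r^2+m^2\big)^{n/2}}{n^{n/2}\cdot 2^n\cdot|mr^{n-1}|\,(a+h)^{\frac{n-1}{2}}},$$
which is the claimed identity. There is no real obstacle here — the statement is essentially a corollary of Theorem~\ref{thm:l1} and Lemma~\ref{lem:l2} — so the only thing to be careful about is that the denominator is genuinely nonzero (guaranteed since $0\neq|r|<n$ forces $r\neq0$ and, via $0<|r|<|\T_{\B{v}_1}(\B{v}_1)|=n$ as in Lemma~\ref{lem:l1}, $m\neq0$) and that the reduction of the $n/2$-th power over $n$ is handled consistently for both even and odd $n$, which it is since we only ever raise nonnegative reals to real powers.
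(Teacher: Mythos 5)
Your proposal is correct and follows essentially the same route as the paper: both obtain $\vol(\LL_{\B{v}_1}^{(r,s)})=(a+h)^{\frac{n-1}{2}}|mr^{n-1}|$ from Lemma~\ref{lem:l2} together with the index from Theorem~\ref{thm:l1}, then substitute the minimum $ar^2+\frac{m^2-r^2}{n}=\frac{(na-1)r^2+m^2}{n}$ into the definition of center density. No gaps; the extra remarks on $r,m\neq 0$ are fine but not needed beyond what Theorem~\ref{thm:l1} already guarantees.
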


\begin{proof} \ Note that by Lemma \ref{lem:l2} and Theorem \ref{thm:l1},  $$\vol{\LL_{\B{v}_1}^{(r,s)}}=\vol{\LL}[\LL:\LL_{\B{v}_1}^{(r,s)}]=(a+h)^{\frac{n-1}{2}}|mr^{n-1}|.$$ Using the definition of center density and the expression for the shortest vector length of $\LL_{\B{v}_1}^{(r,s)}$, we get
\begin{align*}
\delta(\LL_{\B{v}_1}^{(r,s)})&=\frac{\lambda_1^2(\LL_{\B{v}_1}^{(r,s)})^{n/2}}{2^n \vol{\LL_{\B{v}_1}^{(r,s)}}}\\
&= \frac{\left(ar^2+\frac{m^2-r^2}{n}\right)^{n/2}}{2^n (a+h)^{\frac{n-1}{2}} |mr^{n-1}|}  \\
&= \frac{((na-1)r^2+m^2)^{n/2}}{2^n n^{n/2} (a+h)^{\frac{n-1}{2}}  |mr^{n-1}|} 
\end{align*}
as desired. \end{proof}

The following lemma will turn out useful when we maximize and minimize the expression for the center density derived in the previous proposition.

\begin{lemma}
  \label{lem:calc}
  Let $n\geq2$ be an integer and let $a>\frac{1}{n}$. Define
  \begin{align*}
    l&:=\frac{na-1}{n^2-1}, \\
    u&:=\frac{(na-1)(n+1)}{n-1}.
\end{align*}
  Then the real-valued function $f:[l,u]\rightarrow \R$ defined by
$$f(x)=\frac{(na-1+x)^{n}}{x}$$
has a maximum point at $x=u$ and the maximum value is  $$f(u)=2^{n}n^{n}(na-1)^{n-1}(n-1)^{1-n}(n+1)^{-1}.$$
Further, $f$ has a minimum point at $x=x_0:=\frac{na-1}{n-1}$ and the minimum value is
$$f(x_0)=n^{n}(na-1)^{n-1}(n-1)^{1-n}.$$
\end{lemma}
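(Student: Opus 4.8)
The plan is to treat this as a straightforward single-variable calculus exercise on the interval $[l,u]$. First I would compute the derivative of $f(x) = (na-1+x)^n/x$. Writing $c := na-1 > 0$ for brevity (since $a > 1/n$), we have $f(x) = (c+x)^n x^{-1}$, and by the quotient/product rule,
$$f'(x) = \frac{n(c+x)^{n-1}x - (c+x)^n}{x^2} = \frac{(c+x)^{n-1}\bigl(nx - (c+x)\bigr)}{x^2} = \frac{(c+x)^{n-1}\bigl((n-1)x - c\bigr)}{x^2}.$$
Since $c + x > 0$ and $x^2 > 0$ on the interval, the sign of $f'(x)$ is governed entirely by the linear factor $(n-1)x - c$, which vanishes precisely at $x_0 = c/(n-1) = (na-1)/(n-1)$.

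Next I would locate $x_0$ relative to the endpoints. A short computation shows $l = c/(n^2-1) = x_0/(n+1) < x_0$ and $u = c(n+1)/(n-1) = (n+1)x_0 > x_0$, so $x_0 \in (l,u)$ genuinely lies in the interior. Hence $f' < 0$ on $[l,x_0)$ and $f' > 0$ on $(x_0,u]$, so $f$ is strictly decreasing then strictly increasing: $x_0$ is the unique minimum point, and the maximum over the closed interval is attained at one of the endpoints. It then remains to compare $f(l)$ and $f(u)$ and show $f(u) \geq f(l)$; I expect this to reduce, after substituting $c + l = c\cdot\frac{n^2}{n^2-1}$ and $c+u = c\cdot\frac{2n}{n-1}$, to the inequality $2^n n \geq (n+1)$ raised to appropriate powers — in fact $f(u)/f(l)$ should simplify to a clean expression that is manifestly $\geq 1$ for all $n \geq 2$ (one checks $f(u)/f(l) = \frac{(c+u)^n/u}{(c+l)^n/l} = \frac{(2n)^n (n-1)^{-n}}{(n+1)} \cdot \frac{(n^2-1)}{n^{2n}(n^2-1)^{-n}} \cdot \tfrac{1}{?}$, so I would just carry the arithmetic through carefully).

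Finally I would evaluate $f$ at the two distinguished points. At $x = u$: using $c + u = c + \frac{c(n+1)}{n-1} = \frac{2nc}{n-1}$, we get $f(u) = \frac{(2nc/(n-1))^n}{c(n+1)/(n-1)} = \frac{2^n n^n c^n (n-1)^{-n}}{c(n+1)(n-1)^{-1}} = 2^n n^n c^{n-1}(n-1)^{1-n}(n+1)^{-1}$, matching the claimed $f(u) = 2^n n^n (na-1)^{n-1}(n-1)^{1-n}(n+1)^{-1}$. At $x = x_0$: using $c + x_0 = c + \frac{c}{n-1} = \frac{nc}{n-1}$, we get $f(x_0) = \frac{(nc/(n-1))^n}{c/(n-1)} = \frac{n^n c^n (n-1)^{-n}}{c(n-1)^{-1}} = n^n c^{n-1}(n-1)^{1-n}$, matching $f(x_0) = n^n(na-1)^{n-1}(n-1)^{1-n}$. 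The only mild obstacle is the endpoint comparison $f(u)\ge f(l)$, but since the problem only asserts that the maximum is at $u$, and the monotonicity analysis already shows the max is at an endpoint, this comparison is the one genuine computational check; everything else is bookkeeping. I would present the derivative sign analysis first, then the endpoint comparison, then the two explicit evaluations.
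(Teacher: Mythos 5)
Your overall route is the same as the paper's: differentiate, locate the unique interior critical point $x_0=(na-1)/(n-1)$, evaluate $f$ there and at $u$, and reduce the maximum claim to a comparison of the two endpoint values; your explicit evaluations of $f(u)$ and $f(x_0)$ are correct and match the paper's. The one genuine shortfall is that the endpoint comparison $f(u)\ge f(l)$ --- the only assertion in the lemma that is not a direct substitution --- is left as an unfinished placeholder: your displayed ratio ends in a ``$?$'' and, as written, is garbled. The step does go through, and it must be carried out for the proof to be complete: with $c=na-1$ one has $c+l=cn^{2}/(n^{2}-1)$, hence $f(l)=n^{2n}c^{\,n-1}(n^{2}-1)^{1-n}$, and therefore
\[
\frac{f(u)}{f(l)}
=\frac{2^{n}n^{n}c^{\,n-1}(n-1)^{1-n}(n+1)^{-1}}{n^{2n}c^{\,n-1}(n-1)^{1-n}(n+1)^{1-n}}
=\frac{2^{n}(n+1)^{n-2}}{n^{n}}
=\frac{2^{n}}{(n+1)^{2}}\left(1+\frac{1}{n}\right)^{n}\ge 1
\]
for all $n\ge 2$, since both factors in the last expression are positive and nondecreasing in $n$ and their product equals $1$ at $n=2$; this is precisely the check the paper performs. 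A minor difference worth noting: you argue via the sign of $f'$ ($f$ decreasing on $[l,x_0)$ and increasing on $(x_0,u]$), which immediately gives that $x_0$ is the minimum and that the maximum sits at an endpoint, whereas the paper evaluates $f$ at $l$, $x_0$, $u$ and additionally verifies $f(l)/f(x_0)>1$; your sign analysis renders that second ratio check unnecessary, so once you fill in the endpoint comparison above your argument is complete and marginally leaner.
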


\begin{proof} As a differentiable function, $f$ achieves its extreme values at the endpoints of $[l,u]$ or at a point where $f'$ vanishes. A simple calculation shows that

\begin{itemize}
    \item $f(l)=n^{2n}(na-1)^{n-1}(n^2-1)^{1-n}$
    \item $f(u)=2^{n}n^{n}(na-1)^{n-1}(n-1)^{1-n}(n+1)^{-1}$
    \item $f'(x)=\frac{(na+x-1)^{n-1} ((n-1)x-na+1)}{x^2}$ and thus $f'(x)=0$ if and only if $$x=1-na \quad \text{or}\quad x=\frac{na-1}{n-1}.$$
\end{itemize}
The first equality is impossible since $na>1$ by assumption, and negative solutions are not allowed since $l>0$. This leaves us with a single zero for the derivative, $x_0=\frac{na-1}{n-1}\in [l,u]$. The value of the function at this point is
$$f(x_0)=n^{n}(na-1)^{n-1}(n-1)^{1-n}.$$
Note that 
$$
    \frac{f(u)}{f(l)} = \frac{2^{n}}{(n+1)^2}\left(1+\frac{1}{n}\right)^{n} 
    \geq \frac{2^{2}}{(2+1)^2}\left(1+\frac{1}{2}\right)^{2}  \
    =1,
$$
when $n\geq2$. Similarly, 
$$
  \frac{f(l)}{f(x_0)}=n^{n}(n+1)^{1-n}   
  \geq 2^2(2+1)^{1-2} 
  =\frac{4}{3}>1,
$$
when $n\geq2$. Since $f$ is positive, it follows that
$$f(x_0)< f(l)\leq f(u),$$
proving the lemma. \end{proof}

We now have the ingredients to find bounds for $\delta(\LLL)$.

\begin{proposition}
 \label{prop:l2}
 Let $n\geq 2$ be an integer and let $\mathcal{L}\subset\R^n$ be a tame lattice with a Lagrangian basis $\set{\B{e}_1,\dots,\B{e}_n}$. Let $a:=\ideal{\B{e}_1,\B{e}_1}$ and $h:=-\ideal{\B{e}_1,\B{e}_2}$. Let $r,s$ be integers such that $0\neq |r|<n$ and let $m=r+sn$. Suppose that 
$$\frac{na-1}{n^2-1}\leq \left(\frac{m}{r}\right)^2\leq\frac{(na-1)(n+1)}{n-1}.$$ Then
 $$\frac{1}{2^n}\leq\delta(\LL_{\B{v}_1}^{(r,s)})\leq \frac{1}{2^{n/2}\sqrt{n+1}}.$$
 Further, the lower bound is achieved when
 $$ \left(\frac{m}{r}\right)^2=\frac{na-1}{n-1}$$
 and the upper bound is achieved when
 $$\left(\frac{m}{r}\right)^2=\frac{(na-1)(n+1)}{n-1}.$$
\end{proposition}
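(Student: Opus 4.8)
The plan is to collapse the two-parameter expression for the center density obtained in Proposition~\ref{prop:l1} onto the single-variable function $f$ already analysed in Lemma~\ref{lem:calc}. Set $x:=(m/r)^2$; by hypothesis $x$ lies in the interval $[l,u]$ with $l=\frac{na-1}{n^2-1}$ and $u=\frac{(na-1)(n+1)}{n-1}$, exactly the interval appearing in Lemma~\ref{lem:calc}, and $a>\tfrac1n$ by Remark~\ref{rem:1}, so that lemma applies. Writing $(na-1)r^2+m^2=r^2(na-1+x)$ and $|mr^{n-1}|=|r|^n\sqrt{x}$, the factor $|r|^n$ cancels between numerator and denominator in the formula of Proposition~\ref{prop:l1}, giving
$$\delta(\LLL)=\frac{(na-1+x)^{n/2}}{2^n n^{n/2}(a+h)^{\frac{n-1}{2}}\sqrt{x}}=\frac{\sqrt{f(x)}}{2^n n^{n/2}(a+h)^{\frac{n-1}{2}}},\qquad f(x)=\frac{(na-1+x)^n}{x}.$$

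Next I would exploit the Lagrangian relation $a-h(n-1)=1$, which gives $h=\frac{a-1}{n-1}$ and hence $a+h=\frac{na-1}{n-1}$. In the notation of Lemma~\ref{lem:calc} this is precisely $a+h=x_0$, so $n^{n}(a+h)^{n-1}=n^n(na-1)^{n-1}(n-1)^{1-n}=f(x_0)$, i.e. the constant $2^n n^{n/2}(a+h)^{\frac{n-1}{2}}$ in the denominator equals $2^n\sqrt{f(x_0)}$. Therefore
$$\delta(\LLL)=\frac{1}{2^n}\sqrt{\frac{f(x)}{f(x_0)}}.$$

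Now Lemma~\ref{lem:calc} supplies everything: on $[l,u]$ the function $f$ attains its minimum at $x=x_0$ and its maximum at $x=u$, with $f(u)=\frac{2^n n^n}{n+1}(na-1)^{n-1}(n-1)^{1-n}=\frac{2^n}{n+1}f(x_0)$. Hence $x=x_0$ yields $\delta=\frac{1}{2^n}$, and $x=u$ yields $\delta=\frac{1}{2^n}\sqrt{\frac{2^n}{n+1}}=\frac{1}{2^{n/2}\sqrt{n+1}}$. Because these are genuinely the extreme values of $f$ over the admissible range, they are the extreme values of $\delta$, attained exactly when $(m/r)^2=\frac{na-1}{n-1}$ and $(m/r)^2=\frac{(na-1)(n+1)}{n-1}$ respectively, which are the stated equality conditions.

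The proof is essentially bookkeeping once the reduction is set up; there is no real analytic obstacle, since all the optimisation has been isolated into Lemma~\ref{lem:calc}. The only step demanding care is the algebraic simplification that turns $\delta$ into $\sqrt{f(x)}$ up to a constant and the observation that this constant is itself $2^n\sqrt{f(x_0)}$ via $a+h=x_0$; the natural pitfall is simply mishandling the half-integer exponents in $(a+h)^{(n-1)/2}$ and $n^{n/2}$.
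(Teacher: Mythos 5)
Your proof is correct and follows essentially the same route as the paper: reduce the formula of Proposition~\ref{prop:l1} to the single variable $x=(m/r)^2$, apply Lemma~\ref{lem:calc} on $[l,u]$, and simplify using $a+h=\frac{na-1}{n-1}$. Your observation that the constant equals $2^n\sqrt{f(x_0)}$, giving $\delta=2^{-n}\sqrt{f(x)/f(x_0)}$, is just a tidier packaging of the same computation the paper carries out explicitly.
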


\begin{proof} \ Proposition \ref{prop:l1} gives the center density of $\LLL$ as
\begin{align*}
\delta(\LL_{\B{v}_1}^{(r,s)})&=\frac{((na-1)r^2+m^2)^{n/2}}{2^n n^{n/2} (a+h)^{\frac{n-1}{2}}  |mr^{n-1}|}\\
&=\frac{\left(na-1+\left(\frac{m}{r}\right)^2\right)^{n/2}}{2^n n^{n/2} (a+h)^{\frac{n-1}{2}}  \left|\frac{m}{r}\right|}.
\end{align*}

The condition for $q:=(m/r)^2$ is satisfied when $q \in [l,u]$, where $l$ and $u$ are defined as in Lemma \ref{lem:calc}.  Note that we have
\begin{align*}
    \delta(\LL_{\B{v}_1}^{(r,s)})&=\frac{\sqrt{f\left(q\right)}}{2^n n^{n/2} (a+h)^{\frac{n-1}{2}}},
\end{align*}
where $f:[l,u]\rightarrow\R$ is the function defined in Lemma \ref{lem:calc}. It follows from the previous lemma, where $x_0=\frac{na-1}{n-1}$, that
\begin{align*}
    \delta(\LL_{\B{v}_1}^{(r,s)})&\geq \frac{\sqrt{f(x_0)}}{2^n n^{n/2} (a+h)^{\frac{n-1}{2}}}\\
    &= \frac{\sqrt{f(x_0)}}{2^n n^{n/2} (a+\frac{a-1}{n-1})^{\frac{n-1}{2}} } \\
    &=\frac{n^{n/2}(na-1)^{\frac{n-1}{2}}(n-1)^{\frac{1-n}{2}}}{2^n n^{n/2} (a+\frac{a-1}{n-1})^{\frac{n-1}{2}}}\\
    &=\frac{n^{n/2}(na-1)^{\frac{n-1}{2}}(n-1)^{\frac{1-n}{2}}}{2^n n^{n/2} (na-1)^{\frac{n-1}{2}}(n-1)^{\frac{1-n}{2}}} \\
    &=\frac{1}{2^n},
\end{align*}
and
\begin{align*}
\delta(\LL_{\B{v}_1}^{(r,s)})&\leq\frac{\sqrt{f(u)}}{2^n n^{n/2} (a+h)^{\frac{n-1}{2}} } \\
&= \frac{\sqrt{f(u)}}{2^n n^{n/2} (a+\frac{a-1}{n-1})^{\frac{n-1}{2}} } \\
&=\frac{2^{n/2}n^{n/2}(na-1)^{\frac{n-1}{2}}(n-1)^{\frac{1-n}{2}}(n+1)^{-1/2}}{2^n n^{n/2} (na-1)^{\frac{n-1}{2}}(n-1)^{\frac{1-n}{2}}} \\
&= \frac{1}{2^{n/2}\sqrt{n+1}},
\end{align*}
proving the claim. \end{proof}

\begin{rem}
 \label{rem:2}
 Since $\delta(A_n)=\frac{1}{2^{n/2}\sqrt{n+1}}$ and $\delta(\Z^n)=2^{-n}$, the previous proposition says that $$\delta(\Z^n)\leq\delta(\LL_{\B{v}_1}^{(r,s)})\leq\delta(A_n).$$
 \end{rem}

 Note that under the assumptions made in Theorem \ref{thm:l1}, the family of lattices $\LL_{\B{v}_1}^{(r,s)}$ do not have asymptotically good packing density. In Section \ref{sec:denser}, we will see that under more general assumptions denser packings can be obtained from simple tame lattices. 
 To complete our analysis, we give a characterization of $\LLL$ when the bounds of the previous proposition are achieved.

\begin{theorem}
\label{thm:4}
 Let $n\geq 2$ be an integer and let $\mathcal{L}\subset\R^n$ be a tame lattice with a Lagrangian basis $\set{\B{e}_1,\dots,\B{e}_n}$. Let $a:=\ideal{\B{e}_1,\B{e}_1}$ and $h:=-\ideal{\B{e}_1,\B{e}_2}$. Let $r,s$ be integers such that $0\neq |r|<n$ and let $m=r+sn$. If
\begin{enumerate}[label={(\alph*)}]
    \item\label{thm4a} $$ \left(\frac{m}{r}\right)^2=\frac{na-1}{n-1},$$
then $\LL_{\B{v}_1}^{(r,s)}\cong |r|\sqrt{\frac{na-1}{n-1}}\Z^n$.
   \item\label{thm4b} $$ \left(\frac{m}{r}\right)^2=\frac{(na-1)(n+1)}{n-1},$$
then $\LL_{\B{v}_1}^{(r,s)}\cong |r|\sqrt{\frac{na-1}{n-1}}A_n$.
\end{enumerate}
\end{theorem}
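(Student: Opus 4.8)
The plan is to exploit the fact that Theorem~\ref{thm:l1} already hands us an explicit basis of minimal vectors, namely $\B{b}_i := r\B{e}_i + s\B{v}_1$ for $1\le i\le n$, and to compute the Gram matrix of $\LL_{\B{v}_1}^{(r,s)}$ with respect to this basis directly from the Lagrangian data. Using $\B{v}_1 = \sum_i \B{e}_i$, conditions (2)--(4) of Definition~\ref{defn:l1}, and the identity $a-h(n-1)=1$, one gets
\begin{align*}
\ideal{\B{b}_i,\B{b}_i} &= r^2 a + 2rs\ideal{\B{e}_i,\B{v}_1} + s^2\ideal{\B{v}_1,\B{v}_1} = r^2 a + 2rs + s^2 n,\\
\ideal{\B{b}_i,\B{b}_j} &= r^2(-h) + 2rs + s^2 n \qquad (i\neq j),
\end{align*}
since $\ideal{\B{v}_1,\B{v}_1}=n$ (add up the $n^2$ inner products $\ideal{\B{e}_i,\B{e}_j}$, getting $na - n(n-1)h = n$). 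Hence the Gram matrix of $\LL_{\B{v}_1}^{(r,s)}$ in this basis is
\[
G' = (r^2 a + 2rs + s^2 n)\, I_n + (-r^2 h + 2rs + s^2 n)\,(J_n - I_n),
\]
where $J_n$ is the all-ones matrix; writing $m = r+sn$ one checks $2rs + s^2 n = \frac{m^2 - r^2}{n}$, so the diagonal entries are $ar^2 + \frac{m^2-r^2}{n} = \lambda_1^2$ (consistent with Theorem~\ref{thm:l1}) and the off-diagonal entries are $-hr^2 + \frac{m^2-r^2}{n} = \frac{m^2 - r^2 - nhr^2}{n} = \frac{m^2 - r^2(1 + nh)}{n}$. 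Using $1+nh = na - 1 + 1 \cdot$... more cleanly, $a - h(n-1) = 1 \Rightarrow nh = na - 1 - h + h(n) $; the cleanest route is $hn = (a-1)\frac{n}{n-1}$, so $1 + nh = \frac{na-1}{n-1}$, and the off-diagonal entry becomes $\frac{1}{n}\bigl(m^2 - r^2\frac{na-1}{n-1}\bigr)$.

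Now the two cases fall out by substituting the hypotheses. In case~\ref{thm4a}, $(m/r)^2 = \frac{na-1}{n-1}$ forces $m^2 = r^2\frac{na-1}{n-1}$, so the off-diagonal entry vanishes and $G' = \lambda_1^2\, I_n$ with $\lambda_1^2 = ar^2 + \frac{m^2-r^2}{n} = ar^2 + \frac{r^2}{n}\bigl(\frac{na-1}{n-1}-1\bigr) = r^2\frac{na-1}{n-1}$; thus $G'$ is the Gram matrix of $|r|\sqrt{\tfrac{na-1}{n-1}}\,\Z^n$, giving the congruence. In case~\ref{thm4b}, $(m/r)^2 = (n+1)\frac{na-1}{n-1}$ makes the off-diagonal entry equal $\frac{1}{n}\bigl((n+1)-1\bigr)r^2\frac{na-1}{n-1} = r^2\frac{na-1}{n-1}$, i.e.\ exactly \emph{half} of the diagonal entry $\lambda_1^2 = ar^2 + \frac{r^2}{n}\bigl((n+1)\frac{na-1}{n-1} - 1\bigr) = 2r^2\frac{na-1}{n-1}$. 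So $G' = r^2\frac{na-1}{n-1}(I_n + \tfrac12(J_n - I_n)) \cdot$... rescaling, $G' = r^2\frac{na-1}{n-1}\bigl(2I_n - (J_n - I_n)\cdot(-1)\bigr)$; concretely $G'_{ii} = 2c$, $G'_{ij} = c$ with $c = r^2\frac{na-1}{n-1}$, which is precisely $c$ times the standard Gram matrix $(\B{v}_i^T\B{v}_j)$ of the root lattice $A_n$ in the basis $\B{v}_i = \B{e}_i - \B{e}_{i+1}$ (that Gram matrix has $2$ on the diagonal and $-1$ for adjacent, $0$ otherwise) --- wait, here the off-diagonals are all $+c$, not a tridiagonal pattern; but a lattice with Gram matrix $c(I_n + J_n)$ \emph{is} congruent to $\sqrt{c}\,A_n$, since $I_n + J_n$ and the standard $A_n$ Gram matrix are both positive definite of determinant $n+1$ and are $\Z$-equivalent (change of basis $\B{w}_i = \B{b}_1 + \cdots + \B{b}_i$ sends one to the other). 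So $\LL_{\B{v}_1}^{(r,s)} \cong |r|\sqrt{\tfrac{na-1}{n-1}}\,A_n$.

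The only genuine subtlety --- and the step I expect to need the most care --- is the last identification in case~\ref{thm4b}: one must exhibit a \emph{unimodular} change of basis turning the Gram matrix $c(I_n + J_n)$ into $c$ times the usual $A_n$ Gram matrix (the tridiagonal $2,-1$ matrix), rather than merely checking that both have determinant $(n+1)c^n$ and the same minimum. The transformation $U$ with $U_{ij} = 1$ for $i\le j$ and $0$ otherwise (upper triangular all-ones) is unimodular, and a direct check shows $U^T(I_n+J_n)U$ equals the standard $A_n$ form; I would verify this explicitly (or equivalently observe that $\set{\B{b}_1, \B{b}_1+\B{b}_2, \dots, \B{b}_1+\cdots+\B{b}_n}$ is the image under the evident embedding of the $\B{e}_i - \B{e}_{i+1}$ basis of $A_n$, up to sign). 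Everything else is the bookkeeping with $a - h(n-1) = 1$ sketched above, and the congruence $\cong$ (as opposed to mere equivalence $\sim$) is legitimate because we never rescale: the scalar $|r|\sqrt{(na-1)/(n-1)}$ is built into $G'$ itself.
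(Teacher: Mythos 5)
Your route is the same as the paper's: take the minimal basis $\B{b}_i=r\B{e}_i+s\B{v}_1$ supplied by Theorem \ref{thm:l1}, compute its Gram matrix from the Lagrangian data (diagonal $ar^2+\frac{m^2-r^2}{n}$, off-diagonal $-hr^2+\frac{m^2-r^2}{n}$), and simplify with $a-h(n-1)=1$; your identities $1+nh=\frac{na-1}{n-1}$ and the resulting Gram matrices $r^2\frac{na-1}{n-1}I_n$ in case (a) and $c(I_n+J_n)$ with $c=r^2\frac{na-1}{n-1}$ in case (b) agree exactly with the paper's computation, and you rightly insist that case (b) needs an explicit unimodular congruence rather than a determinant-and-minimum comparison.

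The one step that would fail as written is precisely that final identification. With $U$ the upper-triangular all-ones matrix, $U^T(I_n+J_n)U$ is \emph{not} the $A_n$ form: already for $n=2$ it equals $\begin{pmatrix}2&3\\3&6\end{pmatrix}$. Likewise the cumulative sums $\B{w}_i=\B{b}_1+\cdots+\B{b}_i$ satisfy $\ideal{\B{w}_2,\B{w}_2}=6c$, so they are not a rescaled $\B{e}_i-\B{e}_{i+1}$ basis. What is true is the reverse congruence: if $T$ denotes the tridiagonal $A_n$ Gram matrix, then $U^TTU=I_n+J_n$ (the columns of $U$ turn the basis $\B{e}_i-\B{e}_{i+1}$ into $\B{e}_1-\B{e}_{j+1}$, whose Gram matrix is $I_n+J_n$). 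Equivalently, on the $\B{b}_i$ side you must take \emph{differences}, not sums: the basis $\B{b}_1,\ \B{b}_2-\B{b}_1,\ \dots,\ \B{b}_n-\B{b}_{n-1}$, i.e.\ the bidiagonal unimodular matrix with $1$'s on the diagonal and $-1$'s beside it — exactly the matrix $U$ the paper uses — gives diagonal entries $2c$, adjacent entries $-c$ and zeros elsewhere. Since unimodular congruence is symmetric, either direction closes the argument, so the defect is local and immediately repairable (conceptually, your $\B{b}_i$ play the role of the vectors $\B{e}_i-\B{e}_{n+1}$ in the standard model of $A_n$, and it is their consecutive differences that recover $\B{e}_i-\B{e}_{i+1}$); with that correction your proof coincides with the paper's.
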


\begin{proof} \ Theorem \ref{thm:l1} gives a basis of minimal vectors $$\mathcal{B}=\set{\Phi_{(r,s)}(\B{e}_i):1\leq i\leq n}=\set{r\B{e}_1+s\B{v}_1,\dots, r\B{e}_n+s\B{v}_1}$$ for the lattice $\LL_{\B{v}_1}^{(r,s)}$. A direct computation shows that
\begin{enumerate}
    \item $\ideal{\Phi_{(r,s)}(\B{e}_i),\Phi_{(r,s)}(\B{e}_i)}=ar^2+\frac{m^2-r^2}{n}$ for all $1\leq i\leq n$.
    \item $\ideal{\Phi_{(r,s)}(\B{e}_i),\Phi_{(r,s)}(\B{e}_j)}=-r^2h+\frac{m^2-r^2}{n}$ for all $1\leq i\neq j\leq n$.
\end{enumerate}

For \ref{thm4a}, using the equations $a+h(1-n)=1$ and $m^2=r^2\frac{na-1}{n-1}$ we end up with
$$\ideal{\Phi_{(r,s)}(\B{e}_i),\Phi_{(r,s)}(\B{e}_j)}=\begin{cases}\frac{(na-1)r^2}{n-1}\quad \text{if } i=j \\
0\qquad\quad \ \ \ \text{if } i\neq j.
\end{cases}$$
As a consequence, the Gram matrix of the lattice $\LL_{\B{v}_1}^{(r,s)}$ with respect to the basis $\mathcal{B}$ is given by
$$G_{\mathcal{B}}= \frac{(na-1)r^2}{n-1}I_n.$$
Therefore, $\LL_{\B{v}_1}^{(r,s)}\cong |r|\sqrt{\frac{na-1}{n-1}}\Z^n$ proving \ref{thm4a}.
For \ref{thm4b}, using the equations $a+h(1-n)=1$ and $m^2=r^2\frac{(na-1)(n+1)}{n-1}$ we end up with
$$\ideal{\Phi_{(r,s)}(\B{e}_i),\Phi_{(r,s)}(\B{e}_j)}=\begin{cases}\frac{2(na-1)r^2}{n-1}\quad \text{if } i=j \\
\frac{(na-1)r^2}{n-1}\quad \ \text{if } i\neq j.
\end{cases}$$
As a consequence, the Gram matrix for the lattice $\LL_{\B{v}_1}^{(r,s)}$ with respect to the basis $\mathcal{B}$ is given by
$$G_{\mathcal{B}}= \frac{(na-1)r^2}{n-1}\left[
  \begin{array}{cccc}
    2 & 1 & \ldots & 1 \\
   1 & 2 & \ddots & \vdots \\
    \vdots & \ddots & \ddots & 1 \\
    1 & \ldots & 1 & 2 \\
  \end{array}
\right]=:\frac{(na-1)r^2}{n-1}A.$$
Define the unimodular matrix 
$$U:=\left[
  \begin{array}{rrrr}
    1 &0 & \ldots & 0 \\
   -1 & 1 & \ddots & \vdots \\
    \vdots & \ddots & \ddots & 0 \\
    0 & \ldots & -1 & 1 \\
  \end{array}
\right].$$

Note that
$$\frac{n-1}{(na-1)r^2}UG_{\mathcal{B}}U^T=UAU^T=G$$
where
$$G=\left[
  \begin{array}{rrrr}
    2 & -1 & \ldots & 0 \\
   -1 & 2 & \ddots & \vdots \\
    \vdots & \ddots & \ddots & -1 \\
    0 & \ldots & -1 & 2 \\
  \end{array}
\right]$$
is a Gram matrix for $A_n$. Therefore, $\LL_{\B{v}_1}^{(r,s)}\cong |r|\sqrt{\frac{na-1}{n-1}}A_n$ proving \ref{thm4b}. \end{proof}

\subsection{Construction of $A_n$}

We know that a lattice of the form $\LLL$ is similar to $A_n$ if condition \ref{thm4b} in Theorem \ref{thm:4} holds. But the question remains, for which values of the parameter $a$ for the tame superlattice does there exist a pair of integers $(r,s)$ such that the condition is satisfied. If we take the orthogonal lattice, with $a=1$, the condition can only be satisfied when $n+1$ is a square. But if we consider a tame lattice with $a=n$, then we can always find pairs $(r,s)$ such that $\LLL$ is similar to $A_n$. Propositions \ref{prop:3a} and \ref{prop:3} show how one can construct a lattice similar to the $A_n$ lattice as a sublattice of $\Z^n$ in the case that $n+1$ is a square, and as a sublattice of a specific tame lattice for a general $n$.

\begin{proposition}
  \label{prop:3a}
   Suppose that $n+1=d^2$ for some integer $d>2$. Let $\LL=\Z^n$ be the tame lattice with $(a,h)=(1,0)$ and a Lagrangian basis $\set{\B{e}_1,\dots,\B{e}_n}$, where the $\B{e}_i$ are the standard basis vectors in $\R^n$. Then $\LL_{\B{v}_1}^{(d+1,1)}$ is a sublattice of $\LL$ such that $\LL_{\B{v}_1}^{(d+1,1)}\cong (d+1)A_n$.
\end{proposition}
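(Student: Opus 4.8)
The plan is to deduce everything from case~\ref{thm4b} of Theorem~\ref{thm:4}, so the only real task is to check that the choice $r=d+1$, $s=1$ lands in that case for the tame lattice $\LL=\Z^n$ with $(a,h)=(1,0)$. First I would record two preliminary facts. Since the Lagrangian basis here is the standard basis, condition~1 of Definition~\ref{defn:l1} forces $\B{v}_1=\sum_{i=1}^n\B{e}_i$, and hence $\T_{\B{v}_1}(\B{v}_1)=\ideal{\B{v}_1,\B{v}_1}=n$; consequently the hypothesis $0<|r|<|\T_{\B{v}_1}(\B{v}_1)|$ of Lemma~\ref{lem:l1} coincides with the hypothesis $0\neq|r|<n$ of Theorems~\ref{thm:l1} and~\ref{thm:4}, and as soon as it holds, Lemma~\ref{lem:l1} already tells us that $\LL_{\B{v}_1}^{(d+1,1)}$ is a full-rank sublattice of $\LL$.

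Next I would substitute $r=d+1$, $s=1$ and $n=d^2-1$. Then $m=r+sn=(d+1)+(d^2-1)=d(d+1)$, so $m/r=d$ and $(m/r)^2=d^2=n+1$. Since $a=1$ gives $na-1=n-1$, the quantity appearing in case~\ref{thm4b} is $\frac{(na-1)(n+1)}{n-1}=n+1$, so the defining equation of case~\ref{thm4b} holds. It then only remains to verify the standing condition $0\neq|r|<n$: clearly $r=d+1\neq0$, while $d+1<d^2-1$ is equivalent to $(d-2)(d+1)>0$, which is exactly the hypothesis $d>2$ (for $d=2$ one gets $n=3$ and $r=3=n$, and the construction degenerates).

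With all hypotheses of case~\ref{thm4b} of Theorem~\ref{thm:4} in force, that theorem gives
\[
\LL_{\B{v}_1}^{(d+1,1)}\;\cong\;|r|\sqrt{\tfrac{na-1}{n-1}}\,A_n\;=\;(d+1)\sqrt{\tfrac{n-1}{n-1}}\,A_n\;=\;(d+1)\,A_n,
\]
which is the claim. I do not expect any genuine obstacle: the argument is a direct specialization of Theorem~\ref{thm:4}, and the single point needing attention is the arithmetic inequality $|r|<n$, which is precisely what pins down the restriction $d>2$.
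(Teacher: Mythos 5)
Your proposal is correct and follows essentially the same route as the paper: verify $0<|r|=d+1<n$ (which is exactly where $d>2$ is used), compute $(m/r)^2=d^2=n+1=\frac{(na-1)(n+1)}{n-1}$ for $a=1$, and invoke Theorem~\ref{thm:4}\ref{thm4b} to get $\LL_{\B{v}_1}^{(d+1,1)}\cong(d+1)A_n$. The extra remarks about $\T_{\B{v}_1}(\B{v}_1)=n$ and the degenerate case $d=2$ are accurate but not needed beyond what the paper's own argument uses.
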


\begin{proof} Let $(r,s)=(d+1,1)$, and note that $0<|r|=d+1<(d+1)(d-1)=n$. Moreover,
$$\left(\frac{m}{r}\right)^2=\left(\frac{r+sn}{r}\right)^2=\left(1+\frac{n}{r}\right)^2=d^2=n+1=\frac{(na-1)(n+1)}{n-1}.$$
By Theorem \ref{thm:4}, part \ref{thm4b},
$$\LL_{\B{v}_1}^{(r,s)}\cong (d+1) A_n.$$
\end{proof}

\begin{proposition}
  \label{prop:3}
  Let $n\geq 2$ be an integer and let $\B{e}_1,\dots,\B{e}_n$ be the standard basis vectors in $\R^n$. Let $\B{v}_1=(1,\dots, 1)^T\in \R^n$ and define the vectors
  $\B{e}_i'=\left(\frac{1-\sqrt{n+1}}{n}\right)\B{v}_1+\sqrt{n+1}\B{e}_i$ for each $i=1,\dots, n$. Then the lattice $\LL\subset \R^n$ with the basis $\set{\B{e}_1',\dots,\B{e}_n'}$ is tame with $(a,h)=(n,1)$. Further, for any integer $r$ such that $0<|r|<n$, we have that
  $\LL_{\B{v}_1}^{(r,r)}$ is a sublattice of $\LL$ such that $\LL_{\B{v}_1}^{(r,r)}\cong|r|\sqrt{n+1}A_n$.
\end{proposition}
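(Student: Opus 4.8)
The plan is to verify first that $\LL$ is a tame lattice with $(a,h)=(n,1)$, and then to read off the conclusion directly from Theorem~\ref{thm:4}\ref{thm4b}.

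For the first part I would assemble the generator matrix $M$ of $\LL$ whose columns are $\B{e}_1',\dots,\B{e}_n'$. Since $\B{e}_i' = c\,\B{v}_1 + \sqrt{n+1}\,\B{e}_i$ with $c := \tfrac{1-\sqrt{n+1}}{n}$, this matrix is $M = \sqrt{n+1}\,I_n + cJ$, where $J = \B{v}_1\B{v}_1^T$ is the all-ones matrix. Note that $M$ is symmetric and, since $\sqrt{n+1}\neq 0$ and $\sqrt{n+1}+cn = 1 \neq 0$ (these being the eigenvalues of $M$), invertible, so the $\B{e}_i'$ indeed form a basis of a full-rank lattice. Using $J^2 = nJ$ one gets $G := M^TM = M^2 = (n+1)I_n + (c^2n + 2c\sqrt{n+1})J$, and a short computation gives $c^2 n + 2c\sqrt{n+1} = -1$, hence $G = (n+1)I_n - J$. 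This is precisely the Lagrangian Gram form~(\ref{lag:gram}) with $a = n$ and $h = 1$ (and indeed $a - h(n-1) = 1$). It remains to check the remaining parts of Definition~\ref{defn:l1}: conditions~1 and~2 hold since $\sum_i \B{e}_i' = (nc + \sqrt{n+1})\B{v}_1 = \B{v}_1$ and $\ideal{\B{e}_i',\B{v}_1} = cn + \sqrt{n+1} = 1$, while the membership $\B{v}_1 \in \LL \cap \LL^*$ follows because $\B{v}_1 \in \LL$ by condition~1 and $\B{v}_1 \in \LL^*$ because $\ideal{\B{v}_1,\B{e}_i'} = 1 \in \Z$ for every basis vector $\B{e}_i'$.

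With tameness established and $a = n$, I would then invoke the sublattice machinery of Section~\ref{sec:tame}. Taking $s = r$, the parameter in Definition~\ref{def:l1} is $m = r + s\,\T_{\B{v}_1}(\B{v}_1) = r + r\ideal{\B{v}_1,\B{v}_1} = r(n+1)$, so $\left(\tfrac{m}{r}\right)^2 = (n+1)^2$. On the other hand, with $a = n$,
$$\frac{(na-1)(n+1)}{n-1} = \frac{(n^2-1)(n+1)}{n-1} = (n+1)^2,$$
so the hypothesis of Theorem~\ref{thm:4}\ref{thm4b} is met; the remaining hypothesis $0\neq|r|<n$ is given (and, since $|\T_{\B{v}_1}(\B{v}_1)| = n$, Lemma~\ref{lem:l1} guarantees $\LL_{\B{v}_1}^{(r,r)}$ is a full-rank sublattice of $\LL$). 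Theorem~\ref{thm:4}\ref{thm4b} then yields $\LL_{\B{v}_1}^{(r,r)} \cong |r|\sqrt{\tfrac{na-1}{n-1}}\,A_n = |r|\sqrt{n+1}\,A_n$, which is exactly the claim.

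There is no serious obstacle here: the statement is in essence a verification that feeds into Theorem~\ref{thm:4}. The only mildly delicate computation is the cancellation $c^2n + 2c\sqrt{n+1} = -1$ (equivalently $\ideal{\B{e}_i',\B{e}_i'} = n$ and $\ideal{\B{e}_i',\B{e}_j'} = -1$ for $i\neq j$), where the nested radical $\sqrt{n+1}$ must be handled carefully. The real content lies in the choice of the $\B{e}_i'$: they are reverse-engineered so that the tame lattice they span has $a=n$, which is precisely the value that makes $\tfrac{(na-1)(n+1)}{n-1}$ equal to the perfect square $(n+1)^2$ for \emph{every} $n$; this is what allows the single choice $(r,s)=(r,r)$ to realize condition~\ref{thm4b} in all dimensions, in contrast with the $a=1$ case of Proposition~\ref{prop:3a}, which requires $n+1$ itself to be a square.
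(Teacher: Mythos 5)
Your proposal is correct and follows essentially the same route as the paper: verify the Lagrangian-basis conditions for the $\B{e}_i'$ (you do this via the identity $M=\sqrt{n+1}\,I_n+cJ$, $J^2=nJ$, giving $G=(n+1)I_n-J$, whereas the paper computes the four inner products coordinatewise — a purely stylistic difference), and then conclude by feeding $s=r$, $m=r(n+1)$, $(m/r)^2=(n+1)^2=\tfrac{(na-1)(n+1)}{n-1}$ into Theorem~\ref{thm:4}\ref{thm4b}. Your extra check that $\B{v}_1\in\LL\cap\LL^*$ is a careful touch the paper leaves implicit.
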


\begin{proof} \ First we need to check that the vectors $\B{e}_i'$ form a Lagrangian basis.
\begin{enumerate}
    \item $\sum_{i=1}^n \B{e}_i'= n\left(\frac{1-\sqrt{n+1}}{n}\right)\B{v}_1+\sqrt{n+1}\B{v}_1 = \B{v}_1$.
    \item $\ideal{\B{e}_i', \B{v}_1}=n\left(\frac{1-\sqrt{n+1}}{n}\right)+\sqrt{n+1}=1$ for all $1\leq i\leq n$.
    \item $a=\ideal{\B{e}_i',\B{e}_i'}=(n-1)\left(\frac{1-\sqrt{n+1}}{n}\right)^2+\left(\frac{1-\sqrt{n+1}}{n}+\sqrt{n+1}\right)^2=n$ for all $1\leq i\leq n$.
    \item $-h=\ideal{\B{e}_i',\B{e}_j'}=(n-2) \left(\frac{1-\sqrt{n+1}}{n}\right)^2 + 2\left(\frac{1-\sqrt{n+1}}{n}\right)\left(\frac{1-\sqrt{n+1}}{n}+\sqrt{n+1}\right)=-1$ for all $1\leq i\neq j \leq n$.
\end{enumerate}

The conditions are satisfied, so $\LL$ is tame with $(a,h)=(n,1)$. For the second part, suppose that $r$ is an integer such that $0<|r|<n$. Let $s:=r$, so that $m=r(1+n)$, and note that
$$\left(\frac{m}{r}\right)^2=(1+n)^2=\frac{(n^2-1)(n+1)}{n-1}=\frac{(na-1)(n+1)}{n-1}.$$
By Theorem \ref{thm:4}, part \ref{thm4b},
$$\LL_{\B{v}_1}^{(r,r)}\cong |r|\sqrt{\frac{na-1}{n-1}}A_n=|r|\sqrt{n+1}A_n$$
and we are done. \end{proof}

\subsection{Dual lattices}

In this section we show that the dual of a lattice of the form $\LLL$ is a scaled version of $\LL$ (Proposition \ref{prop:5}), and that the dual of a tame lattice is also tame (Lemma \ref{lem:lag_dual}). The most general result is Theorem \ref{DualOfLinearLattices}, which says that lattices of the form $\LLLL$, with some conditions on the parameters $r,s,$ are closed, modulo scaling, under duality.  

\begin{proposition}
  \label{prop:5}
  Let $n\geq 2$ be an integer and let $\mathcal{L}\subset\R^n$ be a tame lattice with a Lagrangian basis $\set{\B{e}_1,\dots,\B{e}_n}$. Let $a:=\ideal{\B{e}_1,\B{e}_1}$ and $h:=-\ideal{\B{e}_1,\B{e}_2}$. Let $r,s$ be integers such that $0\neq |r|<n$. Suppose that $s=rh$. Then the dual lattice of $\LL_{\B{v}_1}^{(r,s)}$ is $\frac{1}{r(a+h)}\LL$.
\end{proposition}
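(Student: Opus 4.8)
The plan is to obtain the equality by proving a single inclusion and then matching covolumes: I will show $\frac{1}{r(a+h)}\LL\subseteq(\LL_{\B{v}_1}^{(r,s)})^*$, and since an inclusion of full-rank lattices whose covolumes agree forces index $1$ (by the formula $[\Lambda:\Lambda']=\vol{\Lambda'}/\vol{\Lambda}$ from the preliminaries), this yields $(\LL_{\B{v}_1}^{(r,s)})^*=\frac{1}{r(a+h)}\LL$.

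First I would record a few identities. From the Lagrangian conditions $\ideal{\B{e}_j,\B{v}_1}=1$, so $\T_{\B{v}_1}(\B{e}_j)=1$, $\Phi_{(r,s)}(\B{e}_j)=r\B{e}_j+s\B{v}_1$, and the vectors $r\B{e}_j+s\B{v}_1$ ($1\le j\le n$) generate $\LL_{\B{v}_1}^{(r,s)}$. Also $\ideal{\B{v}_1,\B{v}_1}=\sum_{i,j}\ideal{\B{e}_i,\B{e}_j}=n\big(a-(n-1)h\big)=n$, and together with $a-(n-1)h=1$ this gives $a+h=1+nh$; hence under the hypothesis $s=rh$ one gets $m=r+sn=r(1+nh)=r(a+h)$, which is nonzero since $a+h>0$ by Remark \ref{rem:1}.

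For the inclusion, take $\B{z}=\sum_{i}z_i\B{e}_i\in\LL$ with $z_i\in\Z$ and set $S:=\sum_i z_i$. A direct computation with the Gram matrix (\ref{lag:gram}) gives $\ideal{\B{z},\B{e}_j}=z_j(a+h)-hS$ and $\ideal{\B{z},\B{v}_1}=S$, so
$$\ideal{\B{z},\,r\B{e}_j+s\B{v}_1}=r\ideal{\B{z},\B{e}_j}+s\ideal{\B{z},\B{v}_1}=r\big(z_j(a+h)-hS\big)+rhS=r(a+h)\,z_j\in r(a+h)\Z.$$
Therefore $\ideal{\tfrac{1}{r(a+h)}\B{z},\,r\B{e}_j+s\B{v}_1}=z_j\in\Z$ for all $j$, and as the $r\B{e}_j+s\B{v}_1$ generate $\LL_{\B{v}_1}^{(r,s)}$ we conclude $\tfrac{1}{r(a+h)}\B{z}\in(\LL_{\B{v}_1}^{(r,s)})^*$ for every $\B{z}\in\LL$, i.e. $\tfrac{1}{r(a+h)}\LL\subseteq(\LL_{\B{v}_1}^{(r,s)})^*$.

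Finally, covolumes. Since $\Phi_{(r,s)}$ acts on $\R^n$ as $\B{x}\mapsto(rI_n+s\B{v}_1\B{v}_1^{T})\B{x}$, the rank-one determinant update formula gives $\det\Phi_{(r,s)}=r^{n-1}(r+s\ideal{\B{v}_1,\B{v}_1})=mr^{n-1}$, so $\vol{\LL_{\B{v}_1}^{(r,s)}}=|mr^{n-1}|\vol\LL$; with $m=r(a+h)$ and $\vol\LL=(a+h)^{(n-1)/2}$ (Lemma \ref{lem:l2}) this equals $|r|^n(a+h)^{(n+1)/2}$, so $(\LL_{\B{v}_1}^{(r,s)})^*$ has covolume $|r|^{-n}(a+h)^{-(n+1)/2}$, which is exactly $\vol{\tfrac{1}{r(a+h)}\LL}=|r(a+h)|^{-n}(a+h)^{(n-1)/2}$. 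Combined with the inclusion, this proves the claim. There is no genuine obstacle here: the one thing that makes it work is the cancellation $\ideal{\B{z},\B{e}_j}+h\ideal{\B{z},\B{v}_1}=(a+h)z_j$, which is precisely what singles out $s=rh$, and one should note that the needed index/volume formula for $\LL_{\B{v}_1}^{(r,s)}$ comes from the rank-one determinant computation above and hence is available here without the extra hypotheses (\ref{bounds}) of Theorem \ref{thm:l1}.
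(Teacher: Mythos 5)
Your proof is correct, and its heart is the same pairing computation as the paper's: the identity $\ideal{\B{z},\,r\B{e}_j+s\B{v}_1}=r(a+h)z_j$ for $\B{z}=\sum_i z_i\B{e}_i$, which is exactly where $s=rh$ is used. The difference is in how you conclude. You extract only the inclusion $\frac{1}{r(a+h)}\LL\subseteq(\LLL)^*$ and then force equality by a covolume comparison, which obliges you to also compute $\det\Phi_{(r,s)}=mr^{n-1}$ (matrix determinant lemma) and to invoke Lemma \ref{lem:l2}. The paper instead specializes the same computation to $\B{z}=\B{e}_i$, obtaining $\ideal{r\B{e}_i+s\B{v}_1,\tfrac{1}{r(a+h)}\B{e}_j}=\delta_{ij}$, i.e.\ $M^TM'=I_n$ for the two generator matrices, and gets equality immediately from Proposition \ref{dual:1} with no volume bookkeeping. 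So your covolume step, while sound, is redundant: your own identity with $\B{z}=\B{e}_i$ already exhibits $\set{\tfrac{1}{r(a+h)}\B{e}_j}$ as the dual basis of $\set{r\B{e}_i+s\B{v}_1}$. What your longer route does buy is a self-contained verification that $\vol{\LLL}=|mr^{n-1}|\vol{\LL}$ without the hypothesis (\ref{bounds}) of Theorem \ref{thm:l1} (a fact the paper records separately in Proposition \ref{prop:rigid_basis}), but it is not needed for this proposition.
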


\begin{proof} \ A basis for $\LL_{\B{v}_1}^{(r,s)}$ is given by the vectors $r\B{e}_i+s\B{v}_1$, for $i=1,\dots,n$. Let $c:=\frac{1}{r(a+h)}$. Then a basis for $\frac{1}{r(a+h)}\LL$ is given by the vectors $c\B{e}_1,\dots,c\B{e}_n$. Note that for every $i,j\in\set{1,\dots,n}$,
$$\ideal{r\B{e}_i+s\B{v}_1,c\B{e}_j}=\begin{cases}
c(ra+s),\quad\text{ if } i=j \\
c(s-rh),\quad\text{ if }i\neq j,
\end{cases}$$
where we used the fact that $\ideal{\B{v}_1,\B{e}_j}=1$ for all $1\leq j\leq n$, and that $\ideal{\B{e}_i,\B{e}_j}=a$ if $i=j$ and $\ideal{\B{e}_i,\B{e}_j}=-h$ else. The assumption $s=rh$ and $c\neq0$ give that
$$\ideal{r\B{e}_i+s\B{v}_1,c\B{e}_j}=c(ra+s)\delta_{ij}=cr(a+h)\delta_{ij}=\delta_{ij}$$
for all $1\leq i\neq j\leq n$, proving that the generator matrices $M$ and $M'$ of $\LL_{\B{v}_1}^{(r,s)}$ and $\frac{1}{r(a+h)}\LL$, respectively, are related by $M^TM'=I_n$. Therefore, $\LL_{\B{v}_1}^{(r,s)}$ is the dual lattice of $\frac{1}{r(a+h)}\LL$ by Remark \ref{rem:dual}.
\end{proof}

\begin{lemma}
	\label{lem:lag_dual}
	Let $n\geq2$ be an integer and let $\LL\subset \R^n$ be a tame lattice with a Lagrangian basis $\set{\B{e}_1,\dots,\B{e}_n}$. Let $a:=\ideal{\B{e}_1,\B{e}_1}$ and $h:=-\ideal{\B{e}_1,\B{e}_2}$. Then the dual lattice $\LL^*$ is a tame lattice with a Lagrangian basis $\set{\B{e}_1',\dots,\B{e}_n'}$, such that $\tilde{a}:=\ideal{\B{e}_1',\B{e}_1'}=\frac{1+h}{a+h}$ and $\tilde{h}:=-\ideal{\B{e}_1',\B{e}_2'}=\frac{-h}{a+h}$.
\end{lemma}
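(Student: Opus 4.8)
The plan is to compute the Gram matrix of $\LL^{*}$ in the dual basis and recognize it as Lagrangian. Let $M$ be the generator matrix of $\LL$ whose columns are the Lagrangian basis vectors $\B{e}_1,\dots,\B{e}_n$, so that $G=M^{T}M$ has the shape (\ref{lag:gram}). By Remark \ref{dual_gram}, $G^{-1}$ is a Gram matrix of $\LL^{*}$ with respect to the basis $\set{\B{e}_1',\dots,\B{e}_n'}$ whose generator matrix is $(M^{T})^{-1}$ (Proposition \ref{dual:1}). Hence the entire lemma reduces to inverting $G$ explicitly and reading off the entries.

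For that, I would write $G=(a+h)I_n-hJ_n$, where $J_n$ is the all-ones $n\times n$ matrix, and look for $G^{-1}$ in the form $xI_n+yJ_n$, using $J_n^{2}=nJ_n$. Matching the coefficients of $I_n$ and $J_n$ gives $x=\frac{1}{a+h}$ and $y\,(a+h-nh)=\frac{h}{a+h}$. The only place where the tame hypothesis enters is the identity $a-h(n-1)=1$, which collapses the factor $a+h-nh$ to $1$; thus $y=\frac{h}{a+h}$ and
\[
G^{-1}=\frac{1}{a+h}\,I_n+\frac{h}{a+h}\,J_n .
\]
This matrix has the shape (\ref{lag:gram}) with diagonal entries $\tilde a=\frac{1+h}{a+h}$ and off-diagonal entries equal to $\frac{h}{a+h}=-\tilde h$, i.e. $\tilde h=\frac{-h}{a+h}$, exactly the claimed values.

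It remains to see that $\set{\B{e}_1',\dots,\B{e}_n'}$ is genuinely a Lagrangian basis and not merely a basis whose Gram matrix has the right shape. By the converse characterization recalled immediately after (\ref{lag:gram}), it suffices to check that the diagonal and off-diagonal entries of $G^{-1}$ satisfy $\tilde a-\tilde h(n-1)=1$; and indeed $\tilde a-\tilde h(n-1)=\frac{1+h+h(n-1)}{a+h}=\frac{1+hn}{a+h}=1$, the last step being precisely the relation $a=1+h(n-1)$. (Alternatively, one may set $\B{v}_1':=\sum_i\B{e}_i'$, observe $\ideal{\B{e}_i',\B{v}_1'}=\tilde a-\tilde h(n-1)=1$ so that $\B{v}_1'\in\LL^{*}\cap(\LL^{*})^{*}=\LL^{*}\cap\LL$, and verify conditions 1--4 of Definition \ref{defn:l1} directly for $\LL^{*}$.) Since $G^{-1}$ is automatically positive definite, nothing further is needed. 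The computation is entirely routine; the only point demanding care is the sign bookkeeping between the $-h$ in (\ref{lag:gram}) and the $+\frac{h}{a+h}$ in $G^{-1}$, which is what produces the minus sign in $\tilde h=\frac{-h}{a+h}$, and I do not anticipate any real obstacle.
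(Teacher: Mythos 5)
Your proposal is correct and follows essentially the same route as the paper: both reduce the lemma, via Proposition \ref{dual:1} and Remark \ref{dual_gram}, to showing that $G^{-1}$ has the Lagrangian shape (\ref{lag:gram}) with $\tilde a=\frac{1+h}{a+h}$, $\tilde h=\frac{-h}{a+h}$ and that $\tilde a-(n-1)\tilde h=1$. The only difference is cosmetic: the paper writes down the candidate $\tilde G$ and verifies $G\tilde G=I_n$ entrywise, whereas you derive the inverse by writing $G=(a+h)I_n-hJ_n$ and solving with the ansatz $xI_n+yJ_n$ using $J_n^2=nJ_n$ --- a slightly slicker calculation, but the same argument.
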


\begin{proof} First note that $$\tilde{a}-(n-1)\tilde{h}=\frac{1+h}{a+h}-(n-1)\left(\frac{-h}{a+h}\right)=\frac{1+hn}{a+h}=\frac{a+h}{a+h}=1,$$
since $a-h(n-1)=1$, \emph{i.e.}, $1+hn=a+h$, by the definition of a tame lattice. Furthermore, $$\tilde{a}+\tilde{h}=\frac{1+h}{a+h}+\frac{-h}{a+h}=\frac{1}{a+h}>0$$
since $a+h>0$ by Remark \ref{rem:1}. If we can show that $\LL^*$ has a Gram matrix $\tilde{G}$ of the form
$$\tilde G= \left[
\begin{array}{cccc}
	\tilde{a} & -\tilde{h} & \ldots & -\tilde{h} \\
	-\tilde{h} & \tilde{a} & \ddots & \vdots \\
	\vdots & \ddots & \ddots & -\tilde{h} \\
	-\tilde{h} & \ldots & -\tilde{h} & \tilde{a} \\
\end{array}
\right],$$
 with respect to some basis $\set{\B{e}_1',\dots,\B{e}_n'}$, then we have shown that $\LL^*$ is a tame lattice with a Lagrangian basis $\set{\B{e}_1',\dots,\B{e}_n'}$ such that $\tilde{a}=\ideal{\B{e}_1',\B{e}_1'}=\frac{1+h}{a+h}$ and $\tilde{h}=-\ideal{\B{e}_1',\B{e}_2'}=\frac{-h}{a+h}$. Denote by $\set{\B{b}_1,\dots,\B{b}_n}$ the rows of the Gram matrix $G$ of the tame lattice $\LL$, and by $\set{\B{d}_1,\dots,\B{d}_n}$ the columns of the matrix $\tilde{G}$. Let us compute the inner products
$\ideal{\B{b}_i,\B{d}_j}$ for all $i,j\in\set{1,\dots,n}$. For $i=j$, we obtain
\begin{align*}
	\ideal{\B{b}_i,\B{d}_i} &= (-h,\dots,a,\dots,-h)^T(-\tilde h,\dots,\tilde a,\dots, -\tilde h)\\
	&= (n-1)h\tilde{h}+a\tilde{a} \\
	&= (n-1)h\tilde{h}+a(1+(n-1)\tilde{h}) \\
	&= a+(n-1)\tilde{h}(a+h) \\
	&= a+(n-1)\left(\frac{-h}{a+h}\right)(a+h) \\
	&= a-h(n-1)=1.
\end{align*}
For $i\neq j$, we have
\begin{align*}
	\ideal{\B{b}_i,\B{d}_j} &= (-h,\dots,a,-h\dots,-h)^T(-\tilde h,\dots,-\tilde h,\tilde a,\dots, -\tilde h) \\
	&= (n-2)h\tilde{h}-\tilde{a}h-a\tilde{h} \\
	&= \tilde{h}((n-2)h-a)-\tilde{a}h \\
	&= -\tilde{h}(a-h(n-1)+h)-\tilde{a}h \\
	&= -\tilde{h}(1+h)-\tilde{a}h \\
	&= \left(\frac{h}{a+h}\right)(1+h)-\frac{h(1+h)}{a+h}\\
	&= 0.
\end{align*}
This shows that $G$ and $\tilde{G}$ are related by $G\tilde{G}=I_n$, so $\tilde{G}$ must be a Gram matrix for the dual lattice $\LL^*$, by Remark \ref{rem:dual}. \end{proof}

 More generally in the next theorem we give conditions under which the family of lattices of the form $\LLLL$ is closed, modulo scaling,  under duality. Before doing so we recall the notion of dual map. Given two lattices $\Lambda_{1}$ and $\Lambda_{2}$, and a linear map between them $\phi: \Lambda_{1} \to \Lambda_{2}$, there exists a unique linear map $\phi^{*}: \Lambda_{2}^{*} \to \Lambda_{1}^{*}$ that is completely characterized by the following adjoint property: for every $l_{1} \in \Lambda_{1}$ and $l_{2}^* \in \Lambda_{2}^*$ 
 we have that  \[\langle\phi^{*}(l_{2}^{*}), l_{1} \rangle_{1}= \langle l_{2}^{*}, \phi(l_{1}) \rangle_{2}\] where $\langle \ , \ \rangle_{i}$ denotes the inner product in the vector space $\Lambda_{i} \otimes \R$. If we pick  bases for $\Lambda_{i}$, together with their respective dual basis,  we can identify ${\rm Hom}(\Lambda_{1}, \Lambda_{2})$ with ${\rm M}_{m,n}(\Z)$. Under this choice of coordinates the map 
\begin{align*}
{\rm Hom}(\Lambda_{1}, \Lambda_{2}) & \to  {\rm Hom}(\Lambda_{2}^{*}, \Lambda_{1}^{*}) \\ 
  \phi & \mapsto  \phi^{*}
\end{align*}
is just transposition,
\begin{align*}
 {\rm M}_{m,n}(\Z) & \to  {\rm M}_{n,m}(\Z) \\ 
  A & \mapsto  A^{T}.
\end{align*}

Observe that in particular we have that the map $\phi \mapsto \phi^{*}$ is linear and that $(\phi \circ \psi)^{*}= \psi^{*} \circ \phi^{*}$.

\begin{theorem}\label{DualOfLinearLattices}
  Let $\LL\subset\R^n$ be a tame lattice, let $\T:\LL\rightarrow\Z$ be a non-trivial linear map such that $\B{v}_1\in\LL\setminus\ker\T$. Let $r,s$ be integers such that $r\neq 0$ and $m:= r+s\T(\B{v}_1)\neq 0$. Suppose that $r+m=0$. Then, the dual lattice of $\LLLL$ is, up to scaling, of the same type.  More explicitly, there exists a rank $n$ tame lattice $\LL_{1}$, a non-trivial linear map  $\widetilde{\T}:\LL_{1}\rightarrow\Z$ and  $\widetilde{\B{v}}_1\in\LL_{1}\setminus\ker\widetilde{\T}$, with $r+s\widetilde{\T}(\B{v}_1)\neq 0$,  
  such that $\left(\LLLL\right)^{*}$ is   $\displaystyle\frac{1}{r^2}(\mathcal{L}_{1})_{\widetilde{\T},\widetilde{\mathbf{v}}_1}^{(r,s)}$. 
\end{theorem}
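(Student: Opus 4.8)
The plan is to exhibit the dual lattice $\left(\LLLL\right)^*$ concretely by applying the dual-map machinery to the defining linear map $\Phi_{(r,s)}$, and then to recognize the result as a lattice of the form $(\mathcal{L}^*)_{\widetilde{\T},\widetilde{\B{v}}_1}^{(r,s)}$. First I would recall from Definition \ref{LinearSubLattice} that $\LLLL = \Phi_{(r,s)}(\LL)$ where $\Phi_{(r,s)}(\B{x}) = r\B{x} + s\T(\B{x})\B{v}_1$, and that by Lemma \ref{lem:l1} the map $\Phi_{(r,s)}$ is injective, so $\left(\LLLL\right)^* = \left(\Phi_{(r,s)}^{-1}\right)^*(\LL^*) = \left(\Phi_{(r,s)}^*\right)^{-1}(\LL^*)$ — more carefully, if $\phi\colon \LL \to \LL$ is an injection of full-rank lattices with image $\phi(\LL)$, then $(\phi(\LL))^* = (\phi^*)^{-1}(\LL^*)$ viewed inside $\LL^*\otimes\R$. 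So the first real step is to compute $\Phi_{(r,s)}^*\colon \LL^* \to \LL^*$ using the adjoint property $\langle \Phi_{(r,s)}^*(\B{w}), \B{x}\rangle = \langle \B{w}, \Phi_{(r,s)}(\B{x})\rangle = r\langle \B{w},\B{x}\rangle + s\T(\B{x})\langle\B{w},\B{v}_1\rangle$. Writing $\T(\B{x}) = \langle \B{x}, \B{t}\rangle$ for the (rational, hence after scaling integral) vector $\B{t}$ representing $\T$, this becomes $\langle \B{w},\Phi_{(r,s)}(\B{x})\rangle = \langle r\B{w} + s\langle \B{w},\B{v}_1\rangle \B{t}, \B{x}\rangle$, so $\Phi_{(r,s)}^*(\B{w}) = r\B{w} + s\langle \B{w},\B{v}_1\rangle \B{t}$.

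The key observation is that $\Phi_{(r,s)}^*$ has exactly the same shape as a $\Phi_{(r,s)}$-type map for the dual lattice: it is $\B{w} \mapsto r\B{w} + s\widetilde{\T}(\B{w})\widetilde{\B{v}}_1$ where $\widetilde{\T}(\B{w}) := \langle \B{w},\B{v}_1\rangle$ is a linear functional on $\LL^*$ (this is well-defined and integer-valued since $\B{v}_1 \in \LL$, so $\langle \B{w},\B{v}_1\rangle \in \Z$ for $\B{w}\in\LL^*$) and $\widetilde{\B{v}}_1 := \B{t} \in \LL^*$ (the vector representing $\T$; since $\T\colon\LL\to\Z$, indeed $\B{t}\in\LL^*$). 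Thus $\Phi_{(r,s)}^* = \widetilde{\Phi}_{(r,s)}$, the analogous map for $\LL^*$ built from $\widetilde{\T}$ and $\widetilde{\B{v}}_1$. Now I would invoke the identity $(\widetilde{\Phi}_{(r,s)})^{-1}(\LL^*)$: a standard fact (and the same computation as "$\LLLL$ is a sublattice of $\LL$" run in reverse) is that for an invertible $\B{x}\mapsto r\B{x} + s\psi(\B{x})\B{w}_1$ type map $\psi$, the preimage $\psi^{-1}(\LL^*)$ equals, up to the scaling factor, the image of $\LL^*$ under the inverse map, and $\widetilde{\Phi}_{(r,s)}^{-1}$ is again affine of the form $\B{y}\mapsto \frac{1}{r}\B{y} - \frac{s}{rm'}(\cdots)$; rather than invert, the cleaner route is: $\left(\LLLL\right)^* = \frac{1}{m}\,\widetilde{\Phi}_{(r,s)}(\LL^*)$ or, more transparently, to show directly that $\widetilde{\Phi}_{(r,s)}(\LL^*) = m'\left(\LLLL\right)^*$ for the appropriate nonzero integer $m'$, hence $\left(\LLLL\right)^*$ is equivalent (indeed equal up to the rational scaling that does not change the lattice since we only need "of the same type", i.e. isomorphic as claimed) to $(\LL^*)_{\widetilde{\T},\widetilde{\B{v}}_1}^{(r,s)}$.

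To make the last paragraph's equivalence precise and land exactly on the claimed statement, I would: (i) check $\widetilde{\B{v}}_1 = \B{t} \notin \ker\widetilde{\T}$, i.e. $\widetilde{\T}(\B{t}) = \langle \B{t},\B{v}_1\rangle = \T(\B{v}_1) \neq 0$, which holds since $\B{v}_1\notin\ker\T$; (ii) compute $\widetilde{\T}(\widetilde{\B{v}}_1) = \T(\B{v}_1)$, so the bound $|r| < |\T(\B{v}_1)| = |\widetilde{\T}(\widetilde{\B{v}}_1)|$ is literally the hypothesis, giving the final clause for free and ensuring the dual construction again satisfies the hypotheses of Lemma \ref{lem:l1}; (iii) note $\LL^*$ is tame by Lemma \ref{lem:lag_dual}, so "of the same type" is fully justified. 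The main obstacle is bookkeeping around scaling: $\left(\Phi_{(r,s)}\right)^*$ maps $\LL^*$ into $\LL^*$ but $(\phi(\LL))^*$ is the \emph{preimage} $(\phi^*)^{-1}(\LL^*)$, not the image, so I must carefully argue that $(\widetilde{\Phi}_{(r,s)})^{-1}(\LL^*)$ is, up to the harmless global factor $1/(rm)$ with $m = r + s\T(\B{v}_1)$, the same lattice as $(\LL^*)_{\widetilde{\T},\widetilde{\B{v}}_1}^{(r,s)} = \widetilde{\Phi}_{(r,s)}(\LL^*)$ — this rests on the fact that for the involution-like structure of these maps one has $\widetilde{\Phi}_{(r,s)}\circ\widetilde{\Phi}_{(r,s)}$, or rather $\Phi_{(r,s)}^*$ composed appropriately, acting as multiplication by $rm$ on the relevant sublattice, which I would verify by a short direct computation mirroring the index computation $[\LL:\LLLL] = |mr^{n-1}|$ already used in the paper. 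Modulo that scaling argument, everything else is a transcription of the adjoint identity.
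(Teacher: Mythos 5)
The first part of your argument --- computing the adjoint $\Phi_{(r,s)}^*(\B{y})=r\B{y}+s\ideal{\B{y},\B{v}_1}\B{t}$, recognizing it as $\B{y}\mapsto r\B{y}+s\widetilde{\T}(\B{y})\widetilde{\B{v}}_1$ with $\widetilde{\T}=\ideal{\cdot\,,\B{v}_1}$ and $\widetilde{\B{v}}_1=\B{t}\in\LL^*$ the vector representing $\T$, and checking $\widetilde{\T}(\widetilde{\B{v}}_1)=\T(\B{v}_1)$ so that $|r|<|\widetilde{\T}(\widetilde{\B{v}}_1)|$ --- is correct and is exactly the content of the paper's proof, which obtains the same identification abstractly by dualizing the factorization $\phi_2=\mu_{\B{v}_1}\circ\T$ and stops at that point.

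The gap is in the bridging step you add, and it is a genuine one. You rightly observe that $\bigl(\Phi_{(r,s)}(\LL)\bigr)^*$ is the \emph{preimage} $(\Phi_{(r,s)}^*)^{-1}(\LL^*)$ rather than the image, but the identity you propose to convert one into the other --- that $\Phi_{(r,s)}^*$ composed with itself (the ``involution-like structure'') acts as multiplication by $rm$, so that the preimage is $\tfrac{1}{rm}\Phi_{(r,s)}^*(\LL^*)$ --- is false. Writing $\widetilde{\Phi}_{(r',s')}(\B{y})=r'\B{y}+s'\widetilde{\T}(\B{y})\widetilde{\B{v}}_1$, a direct computation with $m=r+s\T(\B{v}_1)$ gives $\widetilde{\Phi}_{(r,s)}\circ\widetilde{\Phi}_{(m,-s)}=rm\cdot\mathrm{id}$, whereas $\widetilde{\Phi}_{(r,s)}\circ\widetilde{\Phi}_{(r,s)}=r^2\,\mathrm{id}+s\bigl(2r+s\T(\B{v}_1)\bigr)\widetilde{\T}(\cdot)\,\widetilde{\B{v}}_1\neq rm\cdot\mathrm{id}$. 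Hence what your route actually yields is $\left(\LLLL\right)^*=\tfrac{1}{rm}\,(\mathcal{L}^*)_{\widetilde{\T},\widetilde{\B{v}}_1}^{(m,-s)}$: the dual is, up to scaling, of the same type, but with parameters $(m,-s)$ (whose own ``$m$-parameter'' is $m-s\widetilde{\T}(\widetilde{\B{v}}_1)=r$), not $(r,s)$. The equivalence you would still need, between the $(m,-s)$- and $(r,s)$-members built from $(\LL^*,\widetilde{\T},\widetilde{\B{v}}_1)$, is not harmless bookkeeping and fails in general: for $\LL=\Z^2$, $\T(x_1,x_2)=3x_1+x_2$, $\B{v}_1=(1,0)^T$, $(r,s)=(1,1)$, $m=4$, the two lattices are $\set{\B{x}\in\Z^2:4\mid x_1}$ and $\set{\B{x}\in\Z^2:4\mid x_1+x_2}$, which have equal volume but minima $1$ and $\sqrt{2}$, so they are not equivalent. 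Thus the ``short direct computation'' you defer to cannot deliver the claimed scaling; note also that the paper's own proof never performs this image-versus-preimage step, so if you push your approach to completion the honest conclusion is the $(m,-s)$ statement above rather than the literal $(r,s)$ identification you assert.
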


\begin{proof}
By definition, see Remark \ref{Condition r m}, $\LLLL$ is the rank $n$ lattice defined as the image of the map $\Phi_{(r,s)}: \LL \to \LL$;  $\B{x} \mapsto r\phi_{1}(\B{x})+s\phi_{2}(\B{x})$, where $\phi_{1}(\B{x})=\B{x}$ and $\phi_{2}(\B{x})=\rm{T}(\B{x})\B{v}_1$. Since dualizing  is a linear operator we see that $\Phi_{(r,s)}^{*}=r\phi_{1}^{*}+s\phi_{2}^{*}$. Notice that the map $\phi_{2}: \LL \to \LL; \ \phi_{2}(\B{x})=\rm{T}(\B{x})\B{v}_1$ can be written as the composition of the maps  $ {\rm T}: \LL \to \Z $ and $ \mu_{\B{v}_{1}}: \Z \to \LL; \  n \mapsto n \B{v}_{1}$ \[\phi_{2}=\mu_{\B{v}_{1}}\circ \T.\] 

Hence, \[\phi_{2}^{*}=  {\rm T}^{*} \circ \mu_{\B{v}_{1}}^{*} .\] Let $\widetilde{\T}=\mu_{\B{v}_{1}}^{*}$ and $\widetilde{\B{v}}_1:= {\rm T}^{*}(1)$ (here we are using that $\Z^{*}=\Z$). The image of ${\rm T}^{*} : \Z^{*} \to \LL^{*}$ is generated by $\widetilde{\B{v}}_1$ so we can write, in a similar fashion as above, $\mu_{\widetilde{\B{v}}_1}:={\rm T}^{*}$ and obtain that \[\phi_{2}^{*}=  \mu_{\widetilde{\B{v}}_{1}}\circ \widetilde{\T}.\]

It follows from this that $\Phi_{(r,s)}^{*}(\B{x})=r\phi_{1}^{*}(\B{x})+s\phi_{2}^{*}(\B{x})=r\B{x}+s\widetilde{\T}(\B{x}) \widetilde{\B{v}}_1.$ Thanks to Lemma \ref{lem:lag_dual} we have that $\LL_{1}:=\LL^{*}$ is a tame lattice, and since $\widetilde{\T}(\widetilde{\B{v}}_1)=\mu_{\B{v}_{1}}^{*} \circ {\rm T}^{*}(1)= ({\rm T} \circ \mu_{\B{v}_{1}})^{*}(1)={\rm T}(\B{v}_{1})$, we see that $r+s\widetilde{\T}(\widetilde{\B{v}}_1)\neq 0$. In particular, $(\mathcal{L}_{1})_{\widetilde{\T},\widetilde{\mathbf{v}}_1}^{(r,s)}$ is well-defined and it has, by definition, a $\Z$-basis of the form $\{\Phi_{(r,s)}^{*}(\B{e}^{*}_{1}),\dots,\Phi_{(r,s)}^{*}(\B{e}^{*}_{n})\}$ where $\{\B{e}_{1},\dots,\B{e}_{n}\}$ is a $\Z$-basis of $\LL$. To finish the proof of the theorem we must show that  $\left \langle \Phi_{(r,s)}(\B{e}_{i}), \Phi_{(r,s)}^{*}(\B{e}^{*}_{j})\right \rangle =r^2\delta_{i,j}$ for some basis. Let $\{\B{e}_{1},...,\B{e}_{n}\}$ be a Lagrangian basis of $\LL$ with $\B{v}_1=\B{e}_{1}+\cdots+\B{e}_{n}$ and let $\{\B{e}^{*}_{1},\dots,\B{e}^{*}_{n}\}$ be its dual basis.

\begin{align*}
\left \langle \Phi_{(r,s)}(\B{e}_{i}), \Phi_{(r,s)}^{*}(\B{e}^{*}_{j})\right \rangle &= \left \langle r\B{e}_{i}+s\B{v}_{1}\T(\B{e}_{i}), \Phi_{(r,s)}^{*}(\B{e}^{*}_{j})\right \rangle     \\
&=
r\left \langle \B{e}_{i}, \Phi_{(r,s)}^{*}(\B{e}^{*}_{j})\right \rangle + s\T(\B{e}_{i})\left \langle \B{v}_{1}, \Phi_{(r,s)}^{*}(\B{e}^{*}_{j})\right \rangle\\ &= r\left \langle \Phi_{(r,s)}(\B{e}_{i}), \B{e}^{*}_{j}\right \rangle + s\T(\B{e}_{i})\left \langle \Phi_{(r,s)}(\B{v}_{1}), \B{e}^{*}_{j}\right \rangle   \\ &
 =r\left \langle \Phi_{(r,s)}(\B{e}_{i}), \B{e}^{*}_{j}\right \rangle + s\T(\B{e}_{i}) \sum_{k=1}^{n}\left\langle \Phi_{(r,s)}(\B{e}_{k}), \B{e}^{*}_{j}\right \rangle \\ &= r\left \langle r\B{e}_{i}+s\B{v}_{1}\T(\B{e}_{i}), \B{e}^{*}_{j}\right \rangle + s\T(\B{e}_{i}) \sum_{k=1}^{n}\left\langle \Phi_{(r,s)}(\B{e}_{k}), \B{e}^{*}_{j}\right \rangle \\ & =
r\left \langle r\B{e}_{i}+s\B{v}_{1}\T(\B{e}_{i}), \B{e}^{*}_{j}\right \rangle + s\T(\B{e}_{i}) \sum_{k=1}^{n}\left\langle r\B{e}_{k}+s\B{v}_{1}\T(\B{e}_{k}), \B{e}^{*}_{j}\right \rangle \\ &= r^2\delta_{i,j}+rs\T(\B{e}_{i})+ s\T(\B{e}_{i})\left(r+s \sum_{k=1}^{n}\T(\B{e}_{k}) \right)  \\ &=
 r^2\delta_{i,j}+rs\T(\B{e}_{i})+ s\T(\B{e}_{i})\left(r+s \T(\B{v}_{1}) \right) \\ &= r^2\delta_{i,j}+rs\T(\B{e}_{i})+ s\T(\B{e}_{i})m   =
 r^2\delta_{i,j}+s\T(\B{e}_{i})(r+ m) = r^2\delta_{i,j}.
\end{align*}

\end{proof}

\subsection{Construction of some lattices with high center density}
\label{sec:denser}
We have seen that under the assumptions made in Theorem \ref{thm:l1}, a well-rounded lattice of the form $\LLL$ has center density less than or equal to the center density of $A_n$. However, if we replace $\T_{\B{v}_1}(\B{x})=\ideal{\B{x},\B{v}_1}$ with a general $\Z$-linear map $\T:\LL\rightarrow\Z$, then we get lattices with higher center density. Indeed, in \cite[Example 3.6]{damir2020bases}, using the trace map $\T:\LL\rightarrow\Z$, $\T(\B{x})=\sum_{i=1}^n x_i$, the authors construct the lattice $D_n$ as a lattice of the form $\LLLL$. The purpose of this section is to show how one can construct the densest known lattices in dimensions 8 and 9 as lattices $\LLLL$, when $\LL$ is the tame lattice $\Z^n$. This shows that there are many different types of lattices that can be obtained from a tame lattice $\LL$, using the linear map $\Phi_{(r,s)}$. 

\subsubsection{The $E_8$ lattice}

The $E_8$ lattice is the densest lattice packing in dimension 8. There are two equivalent versions of the lattice; the even ($\Gamma_8$) and odd ($\Gamma_8'$) coordinate system version. Let us state the definition of both.

\begin{definition}
\label{def:gamma}
Define 
\begin{align}
\Gamma_8&:=\set{\B{x}\in\Z^8\cup\left(\Z+\frac{1}{2}\right)^8:\sum_{i=1}^8 x_i\in2\Z}, \\
 \Gamma_8'&:=\set{\B{x}\in\Z^8:\sum_{i=1}^8x_i\in2\Z}\cup\set{\B{x}\in\left(\Z+\frac{1}{2}\right)^8:\sum_{i=1}^8x_i\in2\Z+1}.
\end{align}
\end{definition}

Proposition \ref{prop:rigid_basis} gives the index of $\LLLL$ in $\LL$; this result will become useful in Lemma \ref{lem:e8}. 

\begin{proposition}
  \label{prop:rigid_basis}
  Let $\LL\subset\R^n$ be a lattice, $\T:\LL\rightarrow\Z$ a non-trivial linear map and $\B{v}_1\in\LL\setminus\ker\T$. Let $r,s$ be integers such that $0\neq |r|<|\T(\B{v}_1)|$ and let $m=r+s\T(\B{v}_1)$. Then $$[\LL:\LLLL]=|mr^{n-1}|.$$
\end{proposition}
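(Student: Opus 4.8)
The plan is to compute the index $[\LL:\LLLL]$ via the determinant of the linear map $\Phi_{(r,s)}$, using the fact that for a full-rank lattice endomorphism the index equals the absolute value of the determinant of the matrix representing the map in any lattice basis. First I would fix a basis $\set{\B{b}_1,\dots,\B{b}_n}$ of $\LL$ and let $M$ be the matrix of $\Phi_{(r,s)}$ with respect to this basis, so that $\LLLL$ is generated by the columns of $NM$, where $N$ is a generator matrix for $\LL$; then $[\LL:\LLLL]=|\det M|$ provided $\Phi_{(r,s)}$ is injective, which holds by Lemma \ref{lem:l1} since $0\neq|r|<|\T(\B{v}_1)|$ guarantees $r\neq0$ and $m=r+s\T(\B{v}_1)\neq0$ (the latter because $|r|<|\T(\B{v}_1)|$ forces $m$ and $r$ to… actually one must be a touch careful — $m\neq0$ follows from the argument already used in the text for Lemma \ref{lem:l1}, which I will simply cite).

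The key computation is then $\det\Phi_{(r,s)}$. Write $\Phi_{(r,s)}=r\,\mathrm{id}+s\,\psi$, where $\psi(\B{x})=\T(\B{x})\B{v}_1$ is the rank-one operator $\B{v}_1\otimes\T$. A rank-one perturbation determinant formula (the matrix determinant lemma, or equivalently the fact that $\psi$ has eigenvalue $\T(\B{v}_1)$ on $\B{v}_1$ and $0$ on the hyperplane $\ker\T$) gives
$$\det(r\,\mathrm{id}+s\,\psi)=r^{n}\det\left(\mathrm{id}+\tfrac{s}{r}\psi\right)=r^{n}\left(1+\tfrac{s}{r}\T(\B{v}_1)\right)=r^{n-1}\left(r+s\T(\B{v}_1)\right)=r^{n-1}m.$$
Taking absolute values yields $[\LL:\LLLL]=|mr^{n-1}|$, as claimed. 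I would present the eigenvalue argument explicitly: since $\B{v}_1\notin\ker\T$, the space decomposes as $\R\B{v}_1\oplus\ker\T$ (as $\ker\T$ has codimension one and does not contain $\B{v}_1$), $\psi$ acts as multiplication by $\T(\B{v}_1)$ on $\R\B{v}_1$ and as $0$ on $\ker\T$, so $\Phi_{(r,s)}$ acts as multiplication by $r+s\T(\B{v}_1)=m$ on $\R\B{v}_1$ and by $r$ on the $(n-1)$-dimensional space $\ker\T$, giving $\det\Phi_{(r,s)}=m\cdot r^{n-1}$.

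I expect the only real subtlety to be bookkeeping rather than mathematics: making sure the index formula $[\Lambda:\phi(\Lambda)]=|\det\phi|$ is invoked under the hypothesis that $\phi$ is injective on the lattice (hence the volume formula $\vol{\LLLL}=|\det\Phi_{(r,s)}|\cdot\vol{\LL}$ together with $[\LL:\LLLL]=\vol{\LLLL}/\vol{\LL}$ from the preliminaries applies), and confirming that $m\neq0$ so that $\Phi_{(r,s)}$ is genuinely injective and the image is full rank — both of which are already established in the lead-up to Lemma \ref{lem:l1}. Everything else is the one-line rank-one determinant computation above.
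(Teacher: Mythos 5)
Your proof is correct, but it follows a different route from the paper's. The paper does not compute $\det\Phi_{(r,s)}$ directly: it invokes Proposition 3.7 of \cite{damir2020bases}, which establishes the index formula under the assumption that $\LL$ has a basis that is \emph{rigid} with respect to $\T$ (i.e.\ $\T$ takes a common value on all basis vectors), and then the entire content of the paper's proof is to show such a basis always exists — one picks $\B{v}$ with $\T(\B{v})$ generating ${\rm Im}(\T)$, a basis $\B{w}_1,\dots,\B{w}_{n-1}$ of the rank-$(n-1)$ sublattice $\ker\T$, and observes that $\set{\B{w}_1+\B{v},\dots,\B{w}_{n-1}+\B{v},\B{v}}$ is a rigid basis of $\LL$. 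Your argument instead proves the statement from scratch: you write $\Phi_{(r,s)}=r\,\mathrm{id}+s\,(\B{v}_1\otimes\T)$, diagonalize the rank-one part on $\R\B{v}_1\oplus\ker\T_{\R}$ (eigenvalues $m$ and $r$ with multiplicity $n-1$), and combine $\det\Phi_{(r,s)}=mr^{n-1}$ with the volume/index formula from the preliminaries, with injectivity ($m\neq0$, $r\neq0$) supplied by the hypotheses as in Lemma \ref{lem:l1}. What you gain is a self-contained, essentially one-line computation that does not lean on the external rigid-basis result; what the paper's route buys is brevity given the cited proposition and, as a by-product, the structural fact that a rigid basis exists, which is of independent use. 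One small bookkeeping point in your write-up: the eigenspace decomposition should be phrased with the $\R$-linear extension of $\T$ (or the $\R$-span of the lattice $\ker\T$), since the direct sum $\R\B{v}_1\oplus\ker\T_{\R}$ is a statement about $\R^n$, not about the sublattice $\ker\T$ itself; with that wording fixed, the argument is complete.
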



\begin{proof} The case when $\LL$ has a basis which is rigid with respect to $\T$ is proven in \cite[Proposition 3.7]{damir2020bases}, so let us prove that such a basis always exists. Since $\T:\LL\rightarrow \Z$ is non-trivial there is $\B{v} \in \LL$ such that ${\rm Im}(\T)$ is generated by $\T(\B{v})$, moreover by the additivity of the rank $\ker \T$ is a sublattice of $\LL$ of rank $n-1$. Let $\set{\B{w}_1,\dots,\B{w}_{n-1}}$ be a basis for $\ker \T$. Notice that $\set{\B{w}_1,\dots,\B{w}_{n-1}, \B{v}}$ is a basis for $\LL$. Thus,  $\set{\B{w}_1+\B{v},\dots,\B{w}_{n-1}+\B{v}, \B{v}}$  is a basis for $\LL$ and it is rigid with respect to $\T$ since $\T$ takes the value of $\T(\B{v})$ at every element in the set.


\end{proof}

\begin{lemma}
\label{lem:e8}
  Let $\LL=\Z^8$ and $\B{c}=(1,-1,1,-1,1,-1,1,-1)$ in the standard basis. Define the linear map $\T:\LL\rightarrow \Z$,
  $\T(\B{x})=\langle \B{c}, \B{x}\rangle$. Let $\B{v}_1=(-1,1,-1,1,1,1,1,1)\in \LL\setminus{\ker \T}$ in the standard basis, and $(r,s)=(2,1)$. Then $\LL_{\T,\B{v}_1}^{(r,s)}=2\Gamma_8'$.
\end{lemma}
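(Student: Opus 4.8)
The strategy is to compute an explicit basis for $\LL_{\T,\B{v}_1}^{(r,s)}$ from the definition, extract a generator matrix, and then verify directly that the resulting lattice equals $2\Gamma_8'$. By Definition \ref{def:l1}, with $\LL=\Z^8$ and basis $\set{\B{e}_1,\dots,\B{e}_8}$ the standard basis, the lattice $\LL_{\T,\B{v}_1}^{(r,s)}$ is generated by the eight vectors $\Phi_{(r,s)}(\B{e}_i) = r\B{e}_i + s\,\T(\B{e}_i)\B{v}_1 = 2\B{e}_i + c_i\B{v}_1$, where $c_i$ is the $i$-th coordinate of $\B{c}$ (so $c_i = (-1)^{i+1}$) and $\B{v}_1 = (-1,1,-1,1,1,1,1,1)^T$. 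Since $0<|r|=2<|\T(\B{v}_1)| = |\B{c}^T\B{v}_1| = |{-1-1-1+1+1+1+1+1}| = 2$ — wait, this gives $|\T(\B{v}_1)|=2$, so $|r|<|\T(\B{v}_1)|$ fails; I would first recompute $\T(\B{v}_1) = (1)(-1)+(-1)(1)+(1)(-1)+(-1)(1)+(1)(1)+(-1)(1)+(1)(1)+(-1)(1) = -1-1-1-1+1-1+1-1 = -4$, so $|\T(\B{v}_1)|=4>2=|r|$, and Lemma \ref{lem:l1} applies, giving a full-rank sublattice with the above generating set actually a basis. Also $m = r + s\T(\B{v}_1) = 2 + 1\cdot(-4) = -2$, so by Proposition \ref{prop:rigid_basis} the index is $|mr^{n-1}| = |{-2}\cdot 2^7| = 256$.

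\textbf{Main computation.} I would write out the $8\times 8$ generator matrix $M$ whose $i$-th column is $2\B{e}_i + c_i\B{v}_1$ and then show $M\Z^8 = 2\Gamma_8'$, equivalently $\tfrac{1}{2}M\Z^8 = \Gamma_8'$. For the inclusion $\subseteq$: each column of $\tfrac12 M$ is $\B{e}_i + \tfrac{c_i}{2}\B{v}_1$; I would check that each such vector lies in $\Gamma_8'$ by verifying the coordinate-parity and sum conditions from Definition \ref{def:gamma} (the entries are all in $\Z$ or all in $\Z+\tfrac12$ depending on the sign of $c_i$, and the coordinate sum has the required parity), and since $\Gamma_8'$ is a lattice, the whole image lies in $\Gamma_8'$. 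For the reverse inclusion (or, more efficiently, to finish): I would compare covolumes. We have $\vol{\Gamma_8'} = 1$ (it is unimodular), so $\vol{2\Gamma_8'} = 2^8 = 256$; on the other hand $\vol{\LL_{\T,\B{v}_1}^{(r,s)}} = [\LL:\LL_{\T,\B{v}_1}^{(r,s)}]\cdot\vol{\Z^8} = 256$. Since $\LL_{\T,\B{v}_1}^{(r,s)} \subseteq 2\Gamma_8'$ (from the $\subseteq$ inclusion, noting $2\Gamma_8' \subset \Z^8$ because $\Gamma_8'$ contains $\tfrac12$-integer points only in cosets that double to even integers... I should double check $2\Gamma_8'\subseteq\Z^8$: yes, doubling any element of $\Gamma_8'$ lands in $\Z^8$) and the two lattices have equal covolume, they must be equal.

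\textbf{Expected obstacle.} The only real delicacy is bookkeeping: getting the sign of $\T(\B{v}_1)$, the value of $m$, and the coordinates of each column $\B{e}_i + \tfrac{c_i}{2}\B{v}_1$ correct, and then carefully checking the defining congruences of $\Gamma_8'$ — in particular distinguishing the ``all-integer, even sum'' stratum from the ``all-half-integer, odd sum'' stratum, and making sure every one of the eight generators falls into one of these two (the odd $i$ giving half-integer coordinates since $c_i=1$, the even $i$ giving integer coordinates since $c_i=-1$, or vice versa depending on conventions). Once the $\subseteq$ inclusion is checked on the eight basis vectors, the covolume count closes the argument immediately, so no further structural work is needed. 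An alternative to the covolume argument would be to exhibit explicitly a $\Z$-basis of $\Gamma_8'$ as integer combinations of the columns of $\tfrac12 M$, but the index/covolume comparison is cleaner and avoids inverting $M$ by hand.
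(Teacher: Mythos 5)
Your proposal is correct and follows essentially the same route as the paper: verify $|r|=2<4=|\T(\B{v}_1)|$ and $m=-2$, show the generators lie in $2\Gamma_8'$, and conclude equality by comparing the index $|mr^{7}|=2^8$ (equivalently covolume/determinant) with that of $2\Gamma_8'$; the paper merely simplifies the generator matrix by column operations before checking membership, whereas you check the raw columns directly. One bookkeeping detail to fix when writing it out: every halved generator $\B{e}_i+\tfrac{c_i}{2}\B{v}_1$ has \emph{all} coordinates in $\Z+\tfrac12$ with odd coordinate sum (regardless of the sign of $c_i$), so all eight land in the half-integer stratum of $\Gamma_8'$, not split between the two strata as your side remark suggests.
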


\begin{proof} First note that $0\neq |r|=2<4=|\T(\B{v}_1)|$, so Lemma \ref{lem:l1} gives that $\LL_{\T,\B{v}_1}^{(r,s)}$ is a full rank sublattice of $\LL$. Let $m:=r+s\T(\B{v}_1)=2+(-4)=-2$. Let $\B{e}_1,\dots,\B{e}_8$ be the standard basis vectors in $\R^8$. A basis for $\LL_{\T,\B{v}_1}^{(r,s)}$ is given by the vectors
$$\Phi_{(r,s)}(\B{e}_i)=r\B{e}_i+s\T(\B{e}_i)\B{v}_1=2\B{e}_i+c_i\B{v}_1$$
producing the generator matrix
$$M=\left[
\begin{array}{rrrrrrrrr}
 1 & 1 & -1& 1 & -1 & 1 & -1 & 1  \\
 1 & 1 & 1 & -1 & 1 & -1 & 1 & -1 \\
 -1 & 1 & 1 & 1 & -1 & 1 & -1 & 1  \\
 1 & -1 & 1 & 1 & 1 & -1 & 1 & -1  \\
 1 & -1 & 1 & -1 & 3 & -1 & 1 & -1  \\
 1 & -1 & 1 & -1 & 1 & 1 & 1 & -1  \\
 1 & -1 & 1 & -1 & 1 & -1 & 3 & -1 \\
 1 & -1 & 1 & -1 & 1 & -1 & 1 & 1  \\
\end{array}
\right].$$
If we add column $i$ to column $i+1$ for every $i\in\set{1,\dots,7}$, we get the following generator matrix:
$$M'=\left[
\begin{array}{rrrrrrrr}
 1 & 2 & 0 & 0 & 0 & 0 & 0 & 0 \\
 1 & 2 & 2 & 0 & 0 & 0 & 0 & 0  \\
 -1 & 0 & 2 & 2 & 0 & 0 & 0 & 0 \\
 1 & 0 & 0 & 2 & 2 & 0 & 0 & 0  \\
 1 & 0 & 0 & 0 & 2 & 2 & 0 & 0  \\
 1 & 0 & 0 & 0 & 0 & 2 & 2 & 0  \\
 1 & 0 & 0 & 0 & 0 & 0 & 2 & 2  \\
 1 & 0 & 0 & 0 & 0 & 0 & 0 & 2  \\
\end{array}
\right].$$

Since the columns of $M'$ are contained in $2\Gamma_8'$, we can conclude that $\LL_{\T,\B{v}_1}^{(r,s)}\subseteq 2\Gamma_8'$. On the other hand, since $\det(\LL)=1$ and $[\LL:\LL_{\T,\B{v}_1}^{(r,s)}]=|mr^{n-1}|$ by Proposition \ref{prop:rigid_basis}, we have $$\det(\LL_{\T,\B{v}_1}^{(r,s)})=[\LL:\LL_{\T,\B{v}_1}^{(r,s)}]^2\det(\LL)=|mr^{n-1}|^2=|(-2)\cdot 2^{8-1}|^2=4^8=\det(2\Gamma_8'),$$
proving $\LL_{\T,\B{v}_1}^{(r,s)}=2\Gamma_8'$.
\end{proof}

\subsubsection{Densest known lattice packing in dimension 9}

The largest known center density of a lattice packing in dimension 9 is $\delta_9=\frac{1}{16\sqrt2}$, achieved by the laminated lattice $\Lambda_9$ \cite{nebe_sloane_2012}. Here we show how we can construct a lattice $\LLLL$ with this center density.

\begin{lemma}
  Let $\LL=\Z^9$ and $\B{c}=(1,-1,1,-1,1,-1,1,-1,1)$ in the standard basis. Define the linear map $\T:\LL\rightarrow \Z$, $\T(\B{x})=\B{c}^T \B{x}$. Let $$\B{v}_1=(-1, -1, -1, -1, -1, -1, -2, 1, -1)\in \LL\setminus{\ker \T}$$ in the standard basis, and $(r,s)=(2,1)$. Then $\delta(\LL_{\T,\B{v}_1}^{(r,s)})=\frac{1}{16\sqrt2}=\delta_{9}$.
\end{lemma}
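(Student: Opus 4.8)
The plan is to mimic the computation in Lemma~\ref{lem:e8} verbatim: first verify the hypotheses so that the machinery applies, then exhibit a generator matrix, manipulate it by unimodular column operations into a recognizable form, and finally pin down the lattice (hence its center density) by a volume/index count together with an explicit minimum-vector argument. Concretely, I would start by checking $0\neq|r|=2<|\T(\B{v}_1)|$; since $\T(\B{v}_1)=\B{c}^T\B{v}_1=-1+1-1+1-1+1+2+1-1=2$… wait, one must recompute carefully, but in any case $|\T(\B{v}_1)|\geq 3$ so Lemma~\ref{lem:l1} gives that $\LL_{\T,\B{v}_1}^{(r,s)}$ is a full-rank sublattice of $\Z^9$, and Proposition~\ref{prop:rigid_basis} gives the index $[\LL:\LL_{\T,\B{v}_1}^{(r,s)}]=|mr^{8}|$ with $m=r+s\T(\B{v}_1)$. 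This immediately yields $\vol(\LL_{\T,\B{v}_1}^{(r,s)})=|mr^{8}|$, so $\delta=\lambda_1^9/(2^9|mr^8|)$ and everything reduces to computing $\lambda_1$.

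Next I would write down the generator matrix $M$ whose $i$-th column is $\Phi_{(r,s)}(\B{e}_i)=2\B{e}_i+c_i\B{v}_1$, exactly as in Lemma~\ref{lem:e8}, and then apply a sequence of integer column operations — presumably again adding column $i$ to column $i+1$, or a similar sweep — to bring $M$ to a sparse matrix $M'$ whose columns are manifestly short vectors of a known lattice. The target is $\Lambda_9$ (up to scaling by $|r|=2$, or by whatever scalar makes volumes match), so I would identify $M'$ with (a scaled copy of) the standard laminated-lattice generator matrix, or alternatively realize $\LL_{\T,\B{v}_1}^{(r,s)}$ as sitting inside a scaled $D_9^{+}$-type or $\Lambda_9$-type lattice and then use the matching-volume trick: show $\LL_{\T,\B{v}_1}^{(r,s)}\subseteq c\Lambda_9$ by checking the columns of $M'$ lie in $c\Lambda_9$, then observe $\det(\LL_{\T,\B{v}_1}^{(r,s)})=|mr^8|^2=\det(c\Lambda_9)$, forcing equality $\LL_{\T,\B{v}_1}^{(r,s)}=c\Lambda_9$. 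Since center density is a lattice invariant, $\delta(\LL_{\T,\B{v}_1}^{(r,s)})=\delta(\Lambda_9)=\delta_9=\tfrac{1}{16\sqrt2}$.

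The main obstacle is the identification step: unlike $\Gamma_8'$, the laminated lattice $\Lambda_9$ does not have a single clean coordinate description, so one must either use an explicit known generator matrix for $\Lambda_9$ from the literature (\emph{e.g.}, \cite{nebe_sloane_2012}) and exhibit an explicit unimodular change of basis between $M'$ and $2\Lambda_9$'s generator matrix, or else directly compute $\lambda_1(\LL_{\T,\B{v}_1}^{(r,s)})$ — the latter requiring one to argue that no lattice vector is shorter than the obvious candidates among the columns of $M'$ and their small combinations. I would favor the inclusion-plus-volume route since it sidesteps a genuine shortest-vector search: it suffices to (i) read off from $M'$ that every basis vector lies in the scaled $\Lambda_9$, and (ii) check the determinant equality $|mr^8|^2=4^9\det(\Lambda_9)$ (with $\det(\Lambda_9)=2$, so $\det(2\Lambda_9)=2^{18}\cdot 2=2^{19}$, to be reconciled with $|mr^8|^2$ for the actual values of $m,r$). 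A secondary, purely bookkeeping, obstacle is simply getting the sign and value of $\T(\B{v}_1)$, hence of $m$, exactly right, since the whole volume computation hinges on it; I would double-check $\B{c}^T\B{v}_1$ coordinate by coordinate before proceeding.
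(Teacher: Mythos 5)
Your outer skeleton does match the paper's: verify $0\neq|r|<|\T(\B{v}_1)|$ and invoke Lemma \ref{lem:l1} for full rank, get the index $|mr^{8}|$ from Proposition \ref{prop:rigid_basis} (so $\vol{\LL_{\T,\B{v}_1}^{(r,s)}}=|mr^8|$ since $\vol{\Z^9}=1$), write the generator matrix with columns $2\B{e}_i+c_i\B{v}_1$, and sweep columns as in Lemma \ref{lem:e8}. But the step that carries the entire content of the lemma is left undone. First the bookkeeping you flagged: $\T(\B{v}_1)=\B{c}^T\B{v}_1=-4$, not $2$; your trial value would actually violate the hypothesis $|r|<|\T(\B{v}_1)|$, so the hedge ``in any case $\geq 3$'' is not a harmless placeholder. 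With the correct value, $m=2-4=-2$, the index is $|mr^8|=2^9$, $\det=2^{18}$, and the lemma reduces to showing $\lambda_1^2(\LL_{\T,\B{v}_1}^{(r,s)})=8$. That is precisely what the paper does: after the column sweep it verifies $\lambda_1=\sqrt8$ and $\vol{\cdot}=2^9$ directly from the reduced matrix and computes $\delta=(\sqrt8)^9/(2^9\cdot2^9)=1/(16\sqrt2)$; no identification of the lattice is claimed or needed.

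Your preferred route --- proving $\LL_{\T,\B{v}_1}^{(r,s)}=c\Lambda_9$ by inclusion plus determinant matching --- is a strictly stronger statement than the lemma requires, and as sketched it does not go through. Unlike $2\Gamma_8'$ in Lemma \ref{lem:e8}, $\Lambda_9$ has no congruence-style coordinate description from which membership of the reduced columns can be read off, and your normalization is inconsistent: in the standard minimum-norm-$4$ model $\det(\Lambda_9)=512$, so the superlattice matching $\det=2^{18}$ and $\lambda_1^2=8$ would be $\sqrt2\,\Lambda_9$ (not $2\Lambda_9$, and not a model with $\det(\Lambda_9)=2$), whose coordinates are irrational in that model. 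Checking the containment then requires an explicit generator matrix for (a rescaled) $\Lambda_9$ and an integral change of basis, i.e., essentially the same computational effort as the shortest-vector verification you were trying to sidestep, and if the containment failed you would be thrown back on the direct computation anyway. So as it stands the proposal has a genuine gap at the decisive step: either carry out the direct verification that $\lambda_1^2=8$ (the paper's route), or supply the explicit $\Lambda_9$ model, scaling factor, and basis change --- none of which is provided.
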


\begin{proof} First note that $0\neq |r|=2<4=|\T(\B{v}_1)|$, so $\LL_{\T,\B{v}_1}^{(r,s)}$ is a full rank sublattice of $\LL$. Let $\B{e}_1,\dots,\B{e}_9$ be the standard basis vectors in $\R^9$. A basis for $\LL_{\T,\B{v}_1}^{(r,s)}$ is given by the vectors
$$\Phi_{(r,s)}(\B{e}_i)=r\B{e}_i+s\T(\B{e}_i)\B{v}_1=2\B{e}_i+c_i\B{v}_1.$$
Thus, a generator matrix for $\LL_{\T,\B{v}_1}^{(r,s)}$ is given by
$$M=\left[
\begin{array}{rrrrrrrrr}
 1 & 1 & -1 & 1 & -1 & 1 & -1 & 1 & -1 \\
 -1 & 3 & -1 & 1 & -1 & 1 & -1 & 1 & -1 \\
 -1 & 1 & 1 & 1 & -1 & 1 & -1 & 1 & -1 \\
 -1 & 1 & -1 & 3 & -1 & 1 & -1 & 1 & -1 \\
 -1 & 1 & -1 & 1 & 1 & 1 & -1 & 1 & -1 \\
 -1 & 1 & -1 & 1 & -1 & 3 & -1 & 1 & -1 \\
 -2 & 2 & -2 & 2 & -2 & 2 & 0 & 2 & -2 \\
 1 & -1 & 1 & -1 & 1 & -1 & 1 & 1 & 1 \\
 -1 & 1 & -1 & 1 & -1 & 1 & -1 & 1 & 1 \\
\end{array}
\right].$$
If we add column $i$ to column $i+1$ for every $i\in\set{1,\dots,8}$, we get the generator matrix
$$M'=\left[
\begin{array}{rrrrrrrrr}
 1 & 2 & 0 & 0 & 0 & 0 & 0 & 0 & 0 \\
 -1 & 2 & 2 & 0 & 0 & 0 & 0 & 0 & 0 \\
 -1 & 0 & 2 & 2 & 0 & 0 & 0 & 0 & 0 \\
 -1 & 0 & 0 & 2 & 2 & 0 & 0 & 0 & 0 \\
 -1 & 0 & 0 & 0 & 2 & 2 & 0 & 0 & 0 \\
 -1 & 0 & 0 & 0 & 0 & 2 & 2 & 0 & 0 \\
 -2 & 0 & 0 & 0 & 0 & 0 & 2 & 2 & 0 \\
 1 & 0 & 0 & 0 & 0 & 0 & 0 & 2 & 2 \\
 -1 & 0 & 0 & 0 & 0 & 0 & 0 & 0 & 2 \\
\end{array}
\right].$$
One can verify that $\lambda_1(\LL_{\T,\B{v}_1}^{(r,s)})=\sqrt{2^2+2^2}=\sqrt{8}$ and $\vol{\LL_{\T,\B{v}_1}^{(r,s)}}=2^9$. Therefore,
$$\delta(\LL_{\T,\B{v}_1}^{(r,s)})=\frac{\lambda_1(\LL_{\T,\B{v}_1}^{(r,s)})^{9}}{2^9\vol{\LL_{\T,\B{v}_1}^{(r,s)}}}=\frac{(\sqrt{8})^9}{2^9\cdot 2^9}=\frac{2^9\cdot (\sqrt2) ^9}{2^9\cdot 2^9}=\frac{1}{16\sqrt2}=\delta_9.$$
\end{proof}

\section{Generic well-rounded lattices}
\label{sec:gwr}
In the previous section we saw how to construct GWR sublattices of tame lattices with center density ranging from $\delta(\mathbb{Z}^n)$ to $\delta(A_n)$, so maximally dense lattice packings in dimension 1--3. We proceed to construct GWR lattices which have a good packing density in higher dimensions. We deform basis vectors of the densest lattice packing in dimension 3--5, $D_n$, and in dimension 8, $E_8$, to obtain lattices with good sphere packing density and minimal kissing number in these dimensions. We call these constructions $D_n^\alpha$ and $E_8^\alpha$, where $\alpha$ is a parameter describing how much the basis vectors are distorted. We also investigate when it is possible to scale these deformed lattices to embed them as sublattices of $\Z^n$.

As mentioned earlier, one motivation for finding GWR lattices with high sphere packing density has to do with the theta series of a lattice (cf. Equation \eqref{eq:theta}). Now if we approximate the series with 
$$\Theta_\Lambda(q:=e^{-\frac{1}{2\sigma^2}})\approx1+k_1q^{l_1^2}=1+\kappa(\Lambda)e^{-\frac{\lambda_1(\Lambda)^2}{2\sigma^2}},$$ we see that the kissing number and shortest vector length of a lattice play important roles on the theta series.

The minimization of the theta series for specific ranges for $q$ is interesting in itself, but this problem has also real-life applications in contexts such as wireless communications (cf. Section \ref{sec:app}). The minimization of the theta series also has applications in physics: \emph{e.g.}, finding optimal arrangements of particles interacting under Gaussian potentials \cite{blanc2015}.

Densest lattice packings have a large kissing number in all dimensions for which the densest packings are known. Thus, a natural direction is to  consider lattices with minimal kissing number but for which $\lambda_1$ is close to the maximum in that dimension. This is the motivation behind the deformed lattices. As we will see, however, this does not mean that the theta function of the deformed lattice will be smaller than those for the non-deformed lattices $D_n$ and $E_8$.
Indeed, it has been recently shown in \cite{cohn2019universal} that in dimensions 8 and 24, minimizers of the theta series in these dimension are the $E_8$ and Leech lattice, respectively. Nevertheless, the construction of GWR lattices with arbitrary good packing density is something that, as far as we are aware, has not been done before. Further, such constructions provide good candidates for secure lattice codes according to \cite{towards} (see also discussion in Section \ref{sec:conclusion}).


\subsection{The planar case}

To highlight the idea behind the deformed lattices, we illustrate how the densest lattice packing in dimension 2, the hexagonal lattice $\Lambda_h\sim A_2$, can be deformed to produce GWR lattices with density as close as desired to the optimal density in dimension 2. In fact, we end up with a parametrization of representatives of equivalence classes of planar well-rounded lattices.

\begin{definition}
	Let $0\leq\alpha\leq \frac{1}{2}$ and $\overline{\alpha}:=\sqrt{1-\alpha^2}$. Define $\Lambda_h^\alpha$ to be the planar lattice generated by the matrix
	$$M_{\Lambda_h^\alpha}:=\left[
	\begin{array}{cc}
		1 & \alpha \\
		0 & \overline{\alpha}
	\end{array}
	\right].$$
\end{definition} 
It follows from the definition that $\Lambda_h^{\frac{1}{2}}=\Lambda_h$, the hexagonal lattice, and $\Lambda_h^0=\Z^2$, the orthogonal lattice. For other values of $\alpha$, $\Lambda_h^\alpha$ is a planar GWR lattice. The volume of $\Lambda_h^\alpha$ is given by $\vol{\Lambda_h^\alpha}=|\det(M_{\Lambda_h^\alpha})|=\overline{\alpha},$ and the center density is given by
$\delta(\Lambda_h^\alpha)=\frac{\lambda_1(\Lambda_h^\alpha)^2}{2^2 \vol{\Lambda_h^\alpha}}=\frac{1}{4\overline{\alpha}}.$ In particular, $\alpha\mapsto\delta(\Lambda_h^\alpha)$ is an increasing function on $\left[0,\frac{1}{2}\right]$, as can be seen from Figure \ref{fig:deformed_hexagonal}.

\begin{figure}[H]
	\centering
	\includegraphics[width=0.8\textwidth]{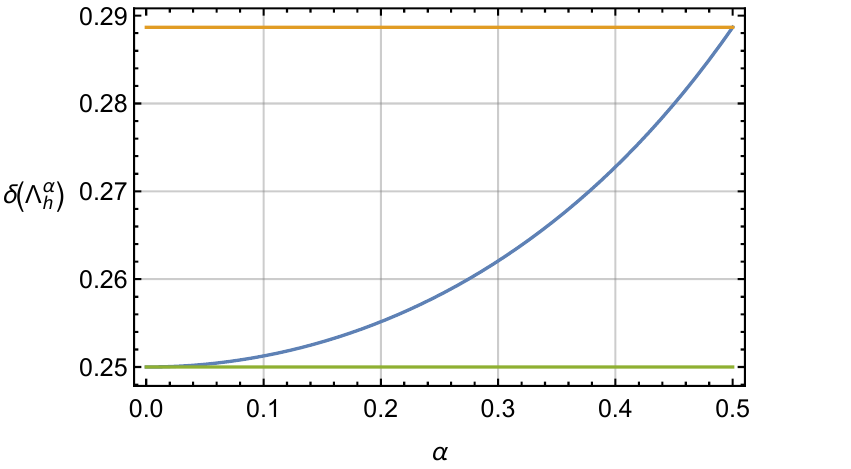}
	\caption{Center density of the deformed hexagonal lattice $\Lambda_h^\alpha$ as a function of $\alpha$. The bottom line shows $\delta(\Z^2)$ and the upper line $\delta(\Lambda_h)$.}
	\label{fig:deformed_hexagonal}
\end{figure}

\subsection{Deformed $D_n$}
\label{d_n}

The Checkerboard lattice, which is defined as $$D_n:=\set{(x_1,\dots,x_n)^T\in\Z^n:\sum_{i=1}^n x_i\equiv 0\pmod{2}},$$
is a root lattice and the densest lattice packing in dimensions 3--5. $D_3$ is also known as the face-centered cubic lattice. It is easy to see that $D_n$ has kissing number $2n(n-1)$; indeed, shortest vectors are given by permutations of $(\pm1,\pm1,0,\dots,0)$.

Using the same strategy as with the hexagonal lattice, we distort the basis vectors of $D_n$ to produce a family of GWR lattices $\set{D_n^\alpha}$, where $\alpha\in(1,\sqrt2]$, such that $\delta(D_n^\alpha)\to\delta(D_n)$ as $\alpha\to1$.

\begin{definition}
 \label{Dn_deformed}
 Let $1\leq \alpha\leq \sqrt2$ and $\overline{\alpha}:=\sqrt{2-\alpha^2}$. We define  $D_n^{\alpha}$, $n\geq3$, to be the rank $n$ lattice with generator matrix
 $$
 M_{D_n^{\alpha}}:=\left[
\begin{array}{rrrrrrrr}
 \alpha & 0 & \overline{\alpha}  & 0 & 0 & 0 & \ldots & 0 \\
 \overline{\alpha}  & \alpha & 0 & 0 & 0 & 0 &  \ldots & 0 \\
 0 & \overline{\alpha} & \alpha & \overline{\alpha} & 0 & 0&  \ldots & 0 \\
 0 & 0 & 0 & -\alpha & \overline{\alpha} & 0 & \ldots&  0  \\
 0 & 0 & 0 & 0&  -\alpha & \overline{\alpha} & \ddots & 0 \\
  \vdots & \vdots &\vdots & \vdots& \ddots & \ddots &\ddots & \vdots \\
  0 & 0 & 0 & 0& 0 & 0 & -\alpha&  \overline{\alpha} \\
  0 & 0 & 0 & 0& 0 & 0 & 0&  -\alpha \\
\end{array}
\right].
 $$
\end{definition}

 Remark that $0\leq\overline{\alpha}\leq1$ when $1\leq\alpha\leq\sqrt2$ and $\alpha^2+\overline{\alpha}^2=2$. Further, $D_n^1=D_n$ (motivating the notation) and $D_n^{\sqrt2} =\sqrt2\Z^n$. For any $\alpha\in(1,\sqrt2]$ we get a GWR lattice $D_n^\alpha$ (Theorem \ref{Dn_deformed:gwr}) and further, $\alpha\mapsto\delta(D_n^\alpha)$ is strictly decreasing on $[1,\sqrt2]$ (Proposition \ref{Dn_deformed:cd_bounds}). We start by deriving an expression for the volume of $D_n^\alpha$.

\begin{proposition}
	\label{Dn_deformed:volume}
	The volume of $D_n^{\alpha}$ is given by
	$$\vol{D_n^\alpha}=\alpha^{n-3}(\alpha^3+\overline{\alpha}^3).$$
\end{proposition}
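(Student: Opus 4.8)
The plan is to compute $\det(M_{D_n^\alpha})$ directly, exploiting the near-triangular structure of the generator matrix. The matrix $M_{D_n^\alpha}$ splits naturally into a $3\times 3$ top-left block that carries all the interesting structure, coupled to a lower-triangular tail. First I would perform a Laplace/cofactor expansion along the last column, which has only the two entries $\overline\alpha$ (in row $n-1$) and $-\alpha$ (in row $n$). Iterating this expansion down the bidiagonal tail — rows $4$ through $n$ together with columns $4$ through $n$ form an upper-bidiagonal block with $-\alpha$ on the diagonal and $\overline\alpha$ on the superdiagonal, except that column $4$ also has the entry $\overline\alpha$ in row $3$ — will peel off a factor of $(-\alpha)$ for each of the rows $4,\dots,n$, i.e. a total factor $(-\alpha)^{n-3}=(-1)^{n-3}\alpha^{n-3}$, reducing the computation to the determinant of the $3\times 3$ leading block
\[
B=\left[\begin{array}{rrr} \alpha & 0 & \overline\alpha \\ \overline\alpha & \alpha & 0 \\ 0 & \overline\alpha & \alpha \end{array}\right].
\]

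The one subtlety in that peeling step is the stray entry $\overline\alpha$ in position $(3,4)$: when I expand along column $n$, then column $n-1$, and so on, the rows that get deleted are $n, n-1, \dots, 4$ in turn, and at each stage the relevant minor is still block-triangular with the $3\times3$ block $B$ sitting in the corner and a shrinking bidiagonal tail below it; the entry $\overline\alpha$ in row $3$ never lies in a column that survives to interact with $B$ until the very end, at which point that column has already been removed. So I would either make this precise by an explicit induction on $n$ (base case $n=3$ giving $\det B$, inductive step removing the last row and column and noting the submatrix is $M_{D_{n-1}^\alpha}$-like up to the sign from the bidiagonal corner), or simply observe that the matrix is block lower-triangular after grouping rows/columns $\{1,2,3\}$ and $\{4,\dots,n\}$ — indeed every entry with row index $\ge 4$ and column index $\le 3$ is zero — so $\det M_{D_n^\alpha}=\det(B)\cdot\det(T)$ where $T$ is the upper-triangular $(n-3)\times(n-3)$ tail with all diagonal entries $-\alpha$, giving $\det T=(-\alpha)^{n-3}$.

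Then a direct $3\times3$ computation gives $\det B = \alpha\cdot(\alpha\cdot\alpha - 0) - 0 + \overline\alpha\cdot(\overline\alpha\cdot\overline\alpha - 0) = \alpha^3+\overline\alpha^3$. Combining, $\det M_{D_n^\alpha} = (-\alpha)^{n-3}(\alpha^3+\overline\alpha^3)$, and hence
\[
\vol{D_n^\alpha}=\bigl|\det M_{D_n^\alpha}\bigr| = \alpha^{n-3}(\alpha^3+\overline\alpha^3),
\]
since $\alpha>0$ and, because $1\le\alpha\le\sqrt2$ forces $0\le\overline\alpha\le 1$, the quantity $\alpha^3+\overline\alpha^3$ is positive. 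I expect no real obstacle here; the only thing worth stating carefully is the block-triangular observation that justifies factoring the determinant, so that the stray $\overline\alpha$ in the third row does not cause concern — it lies strictly above the block decomposition line and therefore is irrelevant to the product formula. As a sanity check one can set $\alpha=1$ (so $\overline\alpha=1$) to get $\vol{D_n^1}=1+1=2=\vol{D_n}$, and $\alpha=\sqrt2$ (so $\overline\alpha=0$) to get $\vol{D_n^{\sqrt2}}=(\sqrt2)^{n-3}\cdot 2\sqrt2=(\sqrt2)^n=\vol{\sqrt2\,\Z^n}$, both consistent with the remarks following Definition \ref{Dn_deformed}.
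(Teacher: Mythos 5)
Your proposal is correct and matches the paper's proof essentially verbatim: both observe that $M_{D_n^\alpha}$ is block triangular with respect to the grouping $\{1,2,3\}$ and $\{4,\dots,n\}$ (the lower-left block vanishes), so the determinant factors as $\det(B)\cdot(-\alpha)^{n-3}$ with $\det(B)=\alpha^3+\overline\alpha^3$, and the volume is the absolute value. The only nit is that you call the matrix ``block lower-triangular'' while the vanishing of all entries with row index $\ge 4$ and column index $\le 3$ makes it block \emph{upper} triangular; this naming slip does not affect the argument.
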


\begin{proof}
Notice that $M_{D_n^\alpha}$ has the form of an upper triangular block matrix. Therefore, the determinant is equal to the product of the determinants of the diagonal blocks:
$$\det(M_{D_n}^\alpha)=
\left|
\begin{array}{ccc}
	\alpha & 0 & \overline{\alpha}  \\
	\overline{\alpha}  & \alpha & 0 \\
	0 & \overline{\alpha} & \alpha \\
\end{array}
\right|\cdot(-\alpha)^{n-3}=(-1)^{n-3}(\alpha^3+\overline{\alpha}^3)\alpha^{n-3}.$$
The claim follows from $\vol{D_n^\alpha}=|\det(M_{D_n^\alpha})|$.
\end{proof}

\begin{theorem}
\label{Dn_deformed:gwr}
	Let $1<\alpha\leq\sqrt2$. Then $D_n^\alpha$ has the set of minimal vectors $S(D_n^\alpha)=\set{\pm\B{b}_1,\dots,\pm\B{b}_n}$ where $\B{b}_i$ is the $i$-th column of $M_{D_n^\alpha}$. In particular, $D_n^\alpha$ is GWR and $\lambda_1^2(D_n^\alpha)=2$.
\end{theorem}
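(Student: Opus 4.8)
The plan is to compute the Gram matrix $G = M_{D_n^\alpha}^T M_{D_n^\alpha}$ explicitly and read off two things: that every basis column has squared norm $2$, and that the basis is "almost orthogonal" in the sense that no $\Z$-linear combination other than $\pm\B{b}_i$ can have squared norm $\leq 2$. First I would record the inner products. Writing $\B{b}_1,\dots,\B{b}_n$ for the columns, one checks $\ideal{\B{b}_i,\B{b}_i}=\alpha^2+\overline{\alpha}^2=2$ for every $i$ (using that each column has exactly two nonzero entries, one $\pm\alpha$ and one $\overline{\alpha}$, except possibly the first few which must be verified directly). For the off-diagonal entries, the band structure of $M_{D_n^\alpha}$ forces $\ideal{\B{b}_i,\B{b}_j}=0$ whenever $|i-j|\geq 3$, and the only potentially nonzero off-diagonal inner products involve the columns $\B{b}_1,\B{b}_2,\B{b}_3,\B{b}_4$ together with the adjacent pairs $(\B{b}_i,\B{b}_{i+1})$ in the tridiagonal tail, which contribute $\pm\alpha\overline{\alpha}$. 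So $G$ is a banded matrix with diagonal $2$ and small off-diagonal entries of size at most $\alpha\overline{\alpha}<1$ for $\alpha\in(1,\sqrt2)$ (and $0$ when $\alpha=\sqrt2$).

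Next I would show $\lambda_1^2(D_n^\alpha)=2$ and that the minimal vectors are exactly $\pm\B{b}_1,\dots,\pm\B{b}_n$. The upper bound $\lambda_1^2\leq 2$ is immediate since each $\B{b}_i$ has squared norm $2$. For the lower bound, take any nonzero $\B{x}=\sum_i u_i\B{b}_i$ with $\B{u}\in\Z^n$ and estimate $\norm{\B{x}}^2=\B{u}^T G\B{u}=2\sum_i u_i^2 + \sum_{i\neq j}u_iu_j\ideal{\B{b}_i,\B{b}_j}$. Because the only nonzero off-diagonal terms couple indices differing by $1$ or (for the first few coordinates) by $2$, and each such coefficient has absolute value $\alpha\overline{\alpha}\leq 1$, one gets $\norm{\B{x}}^2 \geq 2\sum u_i^2 - (\text{something strictly smaller than } 2\sum u_i^2 \text{ unless } \B{u}=\pm\B{e}_k)$. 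Concretely I would argue: if at least two of the $u_i$ are nonzero, a coordinate-wise estimate using $2|u_iu_j|\leq u_i^2+u_j^2$ and the fact that the bandwidth is bounded shows $\norm{\B{x}}^2 > 2$ strictly, unless we are in a degenerate configuration, which a short case check on the finitely many "small support" patterns rules out. If exactly one $u_k$ is nonzero then $\norm{\B{x}}^2=2u_k^2\geq 2$ with equality iff $u_k=\pm1$. This gives $S(D_n^\alpha)=\{\pm\B{b}_1,\dots,\pm\B{b}_n\}$, hence $\kappa(D_n^\alpha)=2n$, and since these $n$ columns are linearly independent their $\R$-span is all of $\R^n$, so $D_n^\alpha$ is well-rounded; combined with $\kappa=2n$ it is GWR.

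The main obstacle is the lower-bound argument $\norm{\B{x}}^2>2$ for vectors with support of size $\geq 2$: the naive inequality $\B{u}^TG\B{u}\geq \lambda_{\min}(G)\norm{\B{u}}^2$ is too weak because $\lambda_{\min}(G)$ can be close to $0$ (indeed at $\alpha=1$ the lattice is $D_n$, which has $\lambda_1^2=2$ but many minimal vectors beyond the $\pm\B{b}_i$, so the strictness genuinely fails at the endpoint and the argument must use $\alpha>1$). So the estimate has to be done by hand, exploiting the explicit sparse pattern: group the quadratic form into blocks of consecutive indices, bound each block's contribution below by a sum of squares with a strictly positive deficit whenever two consecutive $u_i$'s are both nonzero, and handle the anomalous top-left $3\times3$ (or $4\times4$) block separately. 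I expect the bookkeeping for that corner block, and verifying the inner products $\ideal{\B{b}_i,\B{b}_j}$ for $i,j\in\{1,2,3,4\}$ from the given matrix, to be the fiddliest part; everything else is routine.
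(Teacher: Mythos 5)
Your overall plan (pass to the Gram matrix $G=M_{D_n^\alpha}^TM_{D_n^\alpha}$ and bound the integer quadratic form $\B{u}^TG\B{u}$ from below) is a legitimate way in, and your observations that every column has squared norm $2$ and that any argument must degenerate at $\alpha=1$ are correct. But the step carrying all the weight --- showing $\B{u}^TG\B{u}>2$ for every integer $\B{u}\neq 0,\pm\B{e}_k$ --- is not actually carried out, and the two devices you propose for it fail as stated. The pairwise estimate $2|u_iu_j|\le u_i^2+u_j^2$ together with $|G_{ij}|\le\alpha\overline{\alpha}$ only gives $\B{u}^TG\B{u}\ge\sum_i\bigl(2-\sum_{j\neq i}|G_{ij}|\bigr)u_i^2$, and the indices $2,3,4$ are each coupled to \emph{three} others (note also $\ideal{\B{b}_2,\B{b}_4}=\overline{\alpha}^2\neq0$, which your inventory of nonzero entries misses, although it is indeed $\le\alpha\overline{\alpha}$). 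At index $3$ the resulting coefficient is $2-3\alpha\overline{\alpha}$, which is negative whenever $\alpha\overline{\alpha}>2/3$, i.e.\ for all $\alpha$ up to roughly $1.32$; so for most of the parameter range this estimate does not even give positivity, let alone the strict bound $>2$.

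More importantly, the fallback ``a short case check on the finitely many small-support patterns'' is not available, because the integer vectors coming closest to squared norm $2$ (other than $\pm\B{e}_k$) have support growing with $n$ and entries of absolute value $2$. For instance $\B{u}=(-1,1,1,-2,-2,\dots,-2,-1)$, which at $\alpha=1$ produces the $D_n$-minimal vector $(0,\dots,0,1,1)$, has full support, and by continuity $\B{u}^TG\B{u}\to2$ as $\alpha\to1^+$. Any correct proof must therefore control arbitrarily long coefficient strings, which is exactly what the paper's argument does: it works with the explicit coordinates $x_n=-c_n\alpha$, $x_j=-c_j\alpha+c_{j+1}\overline{\alpha}$, proves inductively that $|c_i|\le1$ for $i\ge4$ (using $\alpha>\overline{\alpha}$, e.g.\ $(2\alpha-\overline{\alpha})^2+\alpha^2>2\alpha^2>2$), and then runs a case analysis on $c_4$ and on the anomalous top $3\times3$ block. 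You correctly locate the difficulties (the corner block, the failure of the naive spectral bound, the degeneration at $\alpha=1$), but the promised block decomposition ``with a strictly positive deficit'' is never exhibited, and the example above shows it cannot be replaced by checking finitely many small-support configurations. As it stands, the proposal has a genuine gap precisely at the heart of the theorem.
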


\begin{proof}
 See Appendix, Section \ref{Thm28Pf}.
\end{proof}

Knowing the volume and shortest vector length of $D_n^\alpha$, we are able to compute its center density.

\begin{corollary}
	\label{Dn_deformed:cd}
	The center density of $D_n^\alpha$ is given by
	$$\delta(D_n^\alpha)=\frac{1}{2^{n/2}\alpha^{n-3}(\alpha^3+\overline{\alpha}^3)}.$$
\end{corollary}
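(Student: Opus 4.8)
The plan is to simply assemble the two ingredients already established. By definition, the center density of any lattice $\Lambda\subset\R^n$ is
$$\delta(\Lambda)=\frac{\lambda_1(\Lambda)^n}{2^n\vol{\Lambda}}.$$
For $\Lambda=D_n^\alpha$ both quantities on the right-hand side have been computed: Proposition \ref{Dn_deformed:volume} gives $\vol{D_n^\alpha}=\alpha^{n-3}(\alpha^3+\overline{\alpha}^3)$, and Theorem \ref{Dn_deformed:gwr} gives $\lambda_1^2(D_n^\alpha)=2$, hence $\lambda_1(D_n^\alpha)^n=2^{n/2}$.

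First I would substitute these values into the definition of $\delta$. This yields
$$\delta(D_n^\alpha)=\frac{2^{n/2}}{2^n\,\alpha^{n-3}(\alpha^3+\overline{\alpha}^3)}.$$
Then I would simplify the power of $2$ using $2^{n/2}/2^{n}=2^{-n/2}$, obtaining
$$\delta(D_n^\alpha)=\frac{1}{2^{n/2}\,\alpha^{n-3}(\alpha^3+\overline{\alpha}^3)},$$
which is exactly the claimed formula.

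There is essentially no obstacle here: the substantive work has already been carried out in Proposition \ref{Dn_deformed:volume} (the block-triangular determinant computation) and in Theorem \ref{Dn_deformed:gwr} (identifying the minimal vectors and the lattice minimum). The corollary is a one-line consequence, so the only point worth a sentence of care is to note that $\lambda_1(D_n^\alpha)^n=(\lambda_1^2(D_n^\alpha))^{n/2}=2^{n/2}$ in order to make the exponent bookkeeping transparent.
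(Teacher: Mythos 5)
Your proposal is correct and coincides with the paper's own proof: both simply plug $\lambda_1^2(D_n^\alpha)=2$ from Theorem \ref{Dn_deformed:gwr} and the volume formula from Proposition \ref{Dn_deformed:volume} into the definition of center density and simplify the powers of $2$.
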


\begin{proof}
By Theorem \ref{Dn_deformed:gwr}, $\lambda_1^2(D_n^\alpha)=2$. Thus, using Proposition \ref{Dn_deformed:volume},
$$\delta(D_n^\alpha)=\frac{\lambda_1(D_n^\alpha)^n}{2^n\vol{D_n^\alpha}}=\frac{1}{2^{n/2}\alpha^{n-3}(\alpha^3+\overline{\alpha}^3)}.$$ \
\end{proof}

Next we will show that the center density $\delta(D_n^\alpha)$ is a strictly decreasing function on $[1,\sqrt2]$. This implies that no two lattices in the family $\set{D_n^\alpha}$ are similar. Lemma \ref{lem:Dn_cd} will be used to prove this fact.

\begin{lemma}
	\label{lem:Dn_cd}
	Let $n\geq3$ be an integer. Define the real-valued function $$f:[1,\sqrt2]\rightarrow\R,\quad f(x)=x^{n-3}(x^3+(2-x^2)^{3/2}).$$ 
	Then $f$ is strictly increasing on $[1,\sqrt2]$.
\end{lemma}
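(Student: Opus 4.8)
The plan is to show that $f$ is differentiable on $[1,\sqrt{2}]$ with $f'(x)>0$ on the half-open interval $(1,\sqrt{2}]$ and $f'(1)\ge 0$, so that strict monotonicity on the closed interval $[1,\sqrt{2}]$ follows at once from the mean value theorem. First I would abbreviate $g(x):=x^3+(2-x^2)^{3/2}$, so that $f(x)=x^{n-3}g(x)$, and compute
$$g'(x)=3x^2-3x\sqrt{2-x^2}=3x\bigl(x-\sqrt{2-x^2}\bigr).$$
The one elementary fact that drives everything is that, for $x>0$ with $x\le\sqrt{2}$, squaring the nonnegative quantities shows $x\ge\sqrt{2-x^2}$ if and only if $x^2\ge 2-x^2$, i.e.\ if and only if $x\ge 1$, with equality exactly at $x=1$. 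Hence $g'(x)\ge 0$ on $[1,\sqrt{2}]$, vanishing only at $x=1$. Also $g(x)\ge x^3\ge 1>0$ on this interval.

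Next I would differentiate the product and factor out the (positive) power of $x$:
$$f'(x)=(n-3)x^{n-4}g(x)+x^{n-3}g'(x)=x^{n-4}\bigl[(n-3)\,g(x)+x\,g'(x)\bigr].$$
Since $n\ge 3$ we have $n-3\ge 0$, and together with $g(x)>0$ this gives $(n-3)g(x)\ge 0$; moreover $x\,g'(x)=3x^2\bigl(x-\sqrt{2-x^2}\bigr)\ge 0$ on $[1,\sqrt{2}]$, with equality only at $x=1$. Therefore the bracket is strictly positive for $x\in(1,\sqrt{2}]$, so $f'(x)>0$ there. At the left endpoint the bracket equals $(n-3)g(1)=2(n-3)$, which is strictly positive when $n>3$ and zero when $n=3$; in both cases $f'(1)\ge 0$. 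Since $f$ is continuous on $[1,\sqrt{2}]$ and $f'>0$ throughout $(1,\sqrt{2}]$, $f$ is strictly increasing on $[1,\sqrt{2}]$.

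There is essentially no genuine obstacle in this argument; the only points that deserve a word of care are (i) splitting off the case $n=3$, where $f'$ happens to vanish at the single point $x=1$ but $f$ is still strictly increasing on the closed interval, and (ii) the fact that at $x=\sqrt{2}$ only the left-hand derivative of $(2-x^2)^{3/2}$ exists, which is all that is needed here. If one prefers to sidestep (ii) entirely, it suffices to establish $f'>0$ merely on the open interval $(1,\sqrt{2})$ and then invoke continuity of $f$ at both endpoints to conclude strict monotonicity on $[1,\sqrt{2}]$.
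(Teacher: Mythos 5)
Your proposal is correct and follows essentially the same route as the paper: compute $f'(x)=x^{n-4}\bigl((n-3)(x^3+(2-x^2)^{3/2})+3x^2(x-\sqrt{2-x^2})\bigr)$ and observe that both summands are nonnegative with strict positivity away from $x=1$, using $n\ge 3$ and $x>\sqrt{2-x^2}$ for $x>1$. Your only additions are the explicit endpoint/case-$n=3$ remarks, which the paper's proof leaves implicit.
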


\begin{proof}
A direct computation shows that
$$f'(x)=x^{n-4} \left((n-3) \left(x^3+\left(2-x^2\right)^{3/2}\right)+3 \left(x-\sqrt{2-x^2}\right) x^2\right)>0$$
for all $x\in (1,\sqrt2)$, since $(n-3)(x^3+(2-x^2)^{3/2})\geq0$ and $x-\sqrt{2-x^2}>0$ when $x\in(1,\sqrt2)$.
\end{proof}

\begin{proposition}
\label{Dn_deformed:cd_bounds}
	The center density of $D_n^\alpha$ satisfies $\delta(\Z^n)\leq \delta(D_n^\alpha)\leq \delta(D_n),$
	and the upper bound is achieved when $\alpha=1$ and the lower bound is achieved when $\alpha=\sqrt2$. Moreover, $\delta(D_n^\alpha)$ is strictly decreasing on $[1,\sqrt2]$.
\end{proposition}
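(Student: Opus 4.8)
The plan is to read everything off Corollary~\ref{Dn_deformed:cd} and Lemma~\ref{lem:Dn_cd}. By Corollary~\ref{Dn_deformed:cd} we have
$$\delta(D_n^\alpha)=\frac{1}{2^{n/2}\,\alpha^{n-3}(\alpha^3+\overline{\alpha}^3)}=\frac{1}{2^{n/2}\,f(\alpha)},$$
where $f(x)=x^{n-3}(x^3+(2-x^2)^{3/2})$ is precisely the function of Lemma~\ref{lem:Dn_cd} (using $\overline{\alpha}=\sqrt{2-\alpha^2}$). Since $f$ is strictly increasing and positive on $[1,\sqrt2]$, the map $\alpha\mapsto\delta(D_n^\alpha)=\tfrac{1}{2^{n/2}f(\alpha)}$ is strictly decreasing on $[1,\sqrt2]$; this gives the final assertion at once, and also shows that on $[1,\sqrt2]$ the extreme values of $\delta(D_n^\alpha)$ are attained exactly at the endpoints $\alpha=1$ and $\alpha=\sqrt2$.

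It then remains to evaluate those endpoints and match constants. At $\alpha=1$ one has $\overline{\alpha}=1$, so $f(1)=1\cdot(1+1)=2$ and $\delta(D_n^1)=\tfrac{1}{2^{n/2}\cdot 2}=\tfrac{1}{2^{n/2+1}}$; I would note this equals $\delta(D_n)$ (consistent with $D_n^1=D_n$), since $D_n$ has $\lambda_1^2=2$ and $\vol{D_n}=2$. At $\alpha=\sqrt2$ one has $\overline{\alpha}=0$, hence $f(\sqrt2)=(\sqrt2)^{n-3}\cdot(\sqrt2)^3=(\sqrt2)^n=2^{n/2}$ and $\delta(D_n^{\sqrt2})=\tfrac{1}{2^{n/2}\cdot 2^{n/2}}=2^{-n}=\delta(\Z^n)$ (consistent with $D_n^{\sqrt2}=\sqrt2\,\Z^n\sim\Z^n$). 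Combining with strict monotonicity, for $\alpha\in[1,\sqrt2]$ we get $\delta(\Z^n)=\delta(D_n^{\sqrt2})\le\delta(D_n^\alpha)\le\delta(D_n^1)=\delta(D_n)$, with equality on the left exactly at $\alpha=\sqrt2$ and on the right exactly at $\alpha=1$.

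I expect no real obstacle here: the proposition is a bookkeeping corollary of the two preceding results, and the only care needed is in matching the constants, i.e.\ confirming $f(1)=2$, $f(\sqrt2)=2^{n/2}$, and the standard values $\delta(\Z^n)=2^{-n}$, $\delta(D_n)=2^{-n/2-1}$. As a free byproduct, the remark that no two members of $\set{D_n^\alpha}$ are equivalent follows, since the center density is an equivalence invariant and $\alpha\mapsto\delta(D_n^\alpha)$ is injective by strict monotonicity.
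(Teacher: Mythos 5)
Your proposal is correct and follows essentially the same route as the paper: write $\delta(D_n^\alpha)=\tfrac{1}{2^{n/2}f(\alpha)}$ with $f$ from Lemma~\ref{lem:Dn_cd}, deduce strict monotonicity, and evaluate the endpoints $f(1)=2$ and $f(\sqrt2)=2^{n/2}$ to recover $\delta(D_n)=2^{-n/2-1}$ and $\delta(\Z^n)=2^{-n}$. The endpoint computations and constant matching are all accurate, so no changes are needed.
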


\begin{proof}
Let $f$ be the function defined in Lemma \ref{lem:Dn_cd}. Then $\delta(D_n^\alpha)=\frac{1}{2^{n/2}f(\alpha)}$ and in particular, by the previous lemma, $\delta(D_n^\alpha)$ is strictly decreasing on $[1,\sqrt2]$. Therefore,
\begin{align*}
\delta(D_n^\alpha)&\leq \frac{1}{2^{n/2}f(1)}=\frac{1}{2^{n/2+1}}=\delta(D_n), \\ 
\delta(D_n^\alpha)&\geq \frac{1}{2^{n/2}f(\sqrt2)}=\frac{1}{2^n}=\delta(\Z^n).
\end{align*}
\end{proof}
\subsubsection{Integral deformed $D_n$}
\label{sec:integral1}

From a computational perspective, the lattices $D_n^\alpha$ can be problematic, since the basis vectors might contain irrational entries, or entries close to 1 if we wanted to have a high center density. Moreover, in the context of lattice coset coding, one often looks for GWR lattices which are sublattices of the orthogonal lattice $\Z^n$. Since scaling does not change the center density of a lattice, and GWR lattices are closed under scaling, the aforementioned considerations motivate finding lattices $cD_n^\alpha\subseteq\Z^n$, where $c$ is a constant.

One way to obtain a lattice $c D_n^\alpha\subseteq\Z^n$ is to ensure that $\alpha,\overline{\alpha}\in\Q$ and then scale $D_n^\alpha$ with the denominator of $\alpha$. If we suppose that $\alpha=p/q$ for some positive and relatively prime $p,q\in\Z$ such that $1\leq p/q<\sqrt{2}$, then $\overline{\alpha}=\frac{\sqrt{2q^2-p^2}}{q}\in\Q$ if and only if $2q^2-p^2$ is a square. Equivalently, $(p,q)$ is a solution to the generalized Pell's equation $2y^2-x^2=d^2$ for some $d\in \Z$. There exists algorithms for finding a solution to such an equation (in fact, infinitely many solutions), if a solution exists. We are particularly interested in solutions for which $q$ is small, since this gives a small determinant for $q D_n^\alpha$, and for which $p/q$ is close to 1, since this gives a high center density. Thus, it suffices to check case by case all small pairs of integers $(p,q)$ which have the desired properties. Table \ref{table:pq_vals} shows some pairs of integers $(p,q)$, and corresponding $d$, $\alpha=p/q$, center density and normalized squared lattice minimum, such that $2q^2-p^2=d^2$ for some $d\in\Z$ and consequently, $qD_n^\alpha\subseteq\Z^n$. We have excluded the trivial case $p=q$ which yields $D_n$.

\begin{table}
	\centering
	\renewcommand{\arraystretch}{1.5}
	\begin{tabular}{cccccc} 
		\toprule
		$p$ & $q$ & $d$ & $\alpha$ & $\delta(D_n^\alpha)$& $\lambda_1^2({D_n^{\alpha}}')$ \\ 
		\midrule
		7 & 5 & 1 & 1.4 & $\frac{2^{-\frac{n}{2}-3}\cdot 5^n\cdot7^{3-n}}{43}$ & $4\cdot \left(\frac{2^{-\frac{n}{2}-3}\cdot 5^n\cdot7^{3-n}}{43}\right)^{\frac{2}{n}}$\\ 
		17 & 13 & 7 & 1.30769 & $\frac{2^{-\frac{n}{2}-3}\cdot 13^n\cdot17^{3-n}}{657}$ & $4\cdot \left(\frac{2^{-\frac{n}{2}-3}\cdot 13^n\cdot17^{3-n}}{657}\right)^{\frac{2}{n}}$ \\
		31 & 25 & 17 & 1.24 & $\frac{2^{-\frac{n}{2}-4}\cdot 25^n \cdot31^{3-n}}{2169}$ & $4\cdot \left(\frac{2^{-\frac{n}{2}-4}\cdot 25^n \cdot31^{3-n}}{2169}\right)^{\frac{2}{n}}$ \\
		49 & 41 & 31 & 1.19512 & $\frac{2^{-\frac{n}{2}-4} \cdot41^n \cdot49^{3-n}}{9215}$ & $4\cdot \left(\frac{2^{-\frac{n}{2}-4} \cdot41^n \cdot49^{3-n}}{9215}\right)^{\frac{2}{n}}$ \\
		71 & 61 & 49 & 1.16393 & $\frac{2^{-\frac{n}{2}-3}\cdot61^n\cdot71^{3-n}}{59445}$ & $4\cdot \left(\frac{2^{-\frac{n}{2}-3}\cdot61^n\cdot71^{3-n}}{59445}\right)^{\frac{2}{n}}$ \\
		97 & 85 & 71 & 1.14118 & $\frac{2^{-\frac{n}{2}-3}\cdot85^n\cdot97^{3-n}}{158823}$ & $4\cdot \left(\frac{2^{-\frac{n}{2}-3}\cdot85^n\cdot97^{3-n}}{158823}\right)^{\frac{2}{n}}$\\
		127 & 113 & 97 & 1.12389 & $\frac{2^{-\frac{n}{2}-5}\cdot113^n\cdot127^{3-n}}{92533}$ & $4\cdot \left(\frac{2^{-\frac{n}{2}-5}\cdot113^n\cdot127^{3-n}}{92533}\right)^{\frac{2}{n}}$ \\
		161 & 145 & 127 & 1.11034 & $\frac{2^{-\frac{n}{2}-5}\cdot145^n\cdot161^{3-n}}{194427}$ & $4\cdot \left(\frac{2^{-\frac{n}{2}-5}\cdot145^n\cdot161^{3-n}}{194427}\right)^{\frac{2}{n}}$ \\
		199 & 181 & 161 & 1.09945 & $\frac{2^{-\frac{n}{2}-3}\cdot181^n\cdot199^{3-n}}{1506735}$ & $4\cdot \left(\frac{2^{-\frac{n}{2}-3}\cdot181^n\cdot199^{3-n}}{1506735}\right)^{\frac{2}{n}}$\\
		287 & 265 & 241 & 1.08302 & $\frac{2^{-\frac{n}{2}-4}\cdot265^n\cdot287^{3-n}}{2352339}$ & $4\cdot \left(\frac{2^{-\frac{n}{2}-4}\cdot265^n\cdot287^{3-n}}{2352339}\right)^{\frac{2}{n}}$ \\
		391 & 365 & 337 & 1.07123 & $\frac{2^{-\frac{n}{2}-3}\cdot365^n\cdot391^{3-n}}{12256153}$ & $4\cdot \left(\frac{2^{-\frac{n}{2}-3}\cdot365^n\cdot391^{3-n}}{12256153}\right)^{\frac{2}{n}}$\\
		511 & 481 & 449 & 1.06237 & $\frac{2^{-\frac{n}{2}-6}\cdot481^n\cdot511^{3-n}}{3499245}$ & $4\cdot \left(\frac{2^{-\frac{n}{2}-6}\cdot481^n\cdot511^{3-n}}{3499245}\right)^{\frac{2}{n}}$ \\
		\bottomrule
	\end{tabular}
	\caption{Pairs of integers $(p,q)$ which produce a sublattice $q D_n^\alpha\subseteq \Z^n$. Here ${D_n^{\alpha}}'$ denotes the lattice $c D_n^\alpha$ where $c$ is chosen such that $\vol{cD_n^\alpha}=1$.}
	\label{table:pq_vals}
\end{table}

We provide an example which illustrates how an integral scaled $D_n^\alpha$ lattice can be obtained.

\begin{example}
	Suppose that $n=4$ and $(p,q)=(7,5)$. Then $\alpha=\frac{p}{q}=\frac{7}{5}$ and $\overline{\alpha}=\frac{d}{q}=\frac{1}{5}$, as seen from Table \ref{table:pq_vals}. In this case, a generator matrix for $qD_n^\alpha$ is given by
	$$M_{qD_n^\alpha}=\left[
	\begin{array}{cccc}
		7 & 0 & 1 & 0 \\
		1 & 7 & 0 & 0 \\
		0 & 1 & 7 & 1 \\
		0 & 0 & 0 & -7 \\
	\end{array}
	\right].$$
	By Corollary \ref{Dn_deformed:cd}, (or from Table \ref{table:pq_vals}), $$\delta(D_n^\alpha)=\frac{1}{2^{n/2}\alpha^{n-3}(\alpha^3+\overline{\alpha}^3)}=\frac{1}{2^{4/2}\cdot\left(\frac{7}{5}\right)^{4-3}\cdot\left(\left(\frac{7}{5}\right)^3+\left(\frac{1}{5}\right)^3\right)}\approx 0.0648879.$$
\end{example}

\subsection{Deformed $E_8$}
\label{e_8}

Recall the definition of the $E_8$ lattice (Definition \ref{def:gamma}), the densest lattice packing in dimension $8$, with a center density of 0.0625. Our goal in this section is to deform the basis vectors of the lattice $\Gamma_8'$ (the odd coordinate system version of $E_8$) in the same way that we did with the $D_n$ lattice, to obtain GWR lattices with good packing density in dimension 8.

\begin{definition}
	Let $1\leq \alpha\leq \sqrt2$ and $\overline{\alpha}:=\sqrt{2-\alpha^2}$. We define $E_8^\alpha$ to be the rank $8$ lattice with generator matrix
	$$M_{E_8^\alpha}:=\frac{1}{2}
	\left[
	\begin{array}{rrrrrrrr}
		1 & 2\alpha  & 0 & 0 & 0 & 0 & 0 & 0 \\
		1 & 2\overline{\alpha } & 2\alpha  & 0 & 0 & 0 & 0 & 0 \\
		1 & 0 & 2\overline{\alpha } & 2\alpha  & 0 & 0 & 0 & 0 \\
		1 & 0 & 0 & 2\overline{\alpha } & 2\alpha  & 0 & 0 & 0 \\
		1 & 0 & 0 & 0 & 2\overline{\alpha } & 2\alpha  & 0 & 0 \\
		1 & 0 & 0 & 0 & 0 & 2\overline{\alpha } & 2\alpha  & 0 \\
		-1 & 0 & 0 & 0 & 0 & 0 & 2\overline{\alpha } & 2\alpha  \\
		1 & 0 & 0 & 0 & 0 & 0 & 0 & 2\overline{\alpha } \\
	\end{array}
	\right].
	$$
\end{definition}

Note that $\alpha^2+\overline{\alpha}^2=2$ and that $E_8^1=\Gamma_8'$. This motivates the notation.

\begin{lemma}
	\label{lem:e8_vol}
	The volume of $E_8^\alpha$ is given by
	$$\vol{E_8^\alpha}=\frac{1}{2}\left( \overline{\alpha}^2(\alpha-\overline{\alpha})(\alpha^4+\alpha^2\overline{\alpha}^2+\overline{\alpha}^4)+\alpha^6(\alpha+\overline{\alpha})\right).$$
\end{lemma}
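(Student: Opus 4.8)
The volume is $\vol{E_8^\alpha}=|\det(M_{E_8^\alpha})|$, so the whole task is to compute the determinant of the given $8\times 8$ matrix. The plan is to exploit the near-triangular structure: writing $M_{E_8^\alpha}=\tfrac12 N$ where $N$ has the displayed integer/radical entries, we have $\det(M_{E_8^\alpha})=2^{-8}\det(N)$, and we expand $\det(N)$ along the first column. The first column is $(1,1,1,1,1,1,-1,1)^T$, while the submatrix obtained by deleting the first column is itself \emph{lower bidiagonal} in the remaining seven columns: column $j$ (for $j\ge 2$) has $2\alpha$ on row $j$ and $2\overline\alpha$ on row $j+1$ (indices within $2,\dots,8$), which makes the relevant minors easy to evaluate.

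First I would set up the cofactor expansion along column $1$: $\det(N)=\sum_{i=1}^{8}(-1)^{i+1} n_{i1} M_{i1}$, where $M_{i1}$ is the minor obtained by deleting row $i$ and column $1$. The matrix remaining after deleting column $1$ consists of columns indexed $2,\dots,8$, which form a bidiagonal-type pattern. Deleting row $i$ from this $8\times 7$ block splits it into an upper part (rows $2,\dots,i-1$, if any) that is upper-triangular with the relevant entries, and a lower part (rows $i+1,\dots,8$) that is again triangular; a short computation shows $M_{i1}$ equals $\pm(2\alpha)^{\,i-2}(2\overline\alpha)^{\,8-i}$ for $i\ge 2$, with $M_{11}=(2\overline\alpha)^7$. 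Substituting the column-$1$ entries $n_{i1}=1$ for $i\ne 7$ and $n_{71}=-1$, and tracking the signs $(-1)^{i+1}$ together with the signs coming from the triangular minors, the sum collapses to a geometric-type series in $\alpha/\overline\alpha$. The terms with $i=2,\dots,6$ and $i=8$ contribute with one sign and $i=7$ flips, which after clearing $2^8$ should reproduce exactly
$$\vol{E_8^\alpha}=\tfrac12\left(\overline\alpha^{\,2}(\alpha-\overline\alpha)(\alpha^4+\alpha^2\overline\alpha^2+\overline\alpha^4)+\alpha^6(\alpha+\overline\alpha)\right),$$
using that $\alpha^4+\alpha^2\overline\alpha^2+\overline\alpha^4=\dfrac{\alpha^6-\overline\alpha^6}{\alpha^2-\overline\alpha^2}$ to recognise the partial geometric sums. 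Finally I would note that for $1\le\alpha\le\sqrt2$ the quantity inside the parentheses is nonnegative (each summand is, since $\alpha\ge\overline\alpha$ on this range), so the absolute value is harmless and the formula holds as stated.

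The main obstacle is purely bookkeeping: getting the signs of the seven minors right. There are two sources of sign — the cofactor sign $(-1)^{i+1}$ and the sign of each triangular minor $M_{i1}$, which depends on how the rows $i+1,\dots,8$ are permuted into diagonal position after deletion — and it is easy to be off by an overall sign or to mis-handle the single $-1$ in row $7$ of column $1$. I would double-check the computation at a convenient specialization, e.g. $\alpha=\overline\alpha=1$ (where $M_{E_8^\alpha}$ is a generator matrix for $\Gamma_8'$ and $\vol{\Gamma_8'}=1$), and also $\alpha=\sqrt2,\ \overline\alpha=0$, to confirm the signs; both checks are immediate from the claimed formula. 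No deep idea is needed beyond the observation that deleting the first column leaves a bidiagonal block.
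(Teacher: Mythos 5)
Your approach is essentially the paper's: its proof is literally a one-line ``direct computation'' of $\det(M_{E_8^\alpha})$ followed by $\vol{E_8^\alpha}=|\det(M_{E_8^\alpha})|$, and your cofactor expansion of $\det N$ along the first column is a correct way to organise that computation (the alternating sum does collapse, via $\alpha^4+\alpha^2\overline{\alpha}^2+\overline{\alpha}^4=\frac{\alpha^6-\overline{\alpha}^6}{\alpha^2-\overline{\alpha}^2}$, to the stated formula, and your sign/absolute-value remark and the checks at $\alpha=1$, $\alpha=\sqrt2$ are fine). One bookkeeping slip to fix: after deleting row $i$ and column $1$, the remaining matrix is block diagonal with an $(i-1)\times(i-1)$ lower-bidiagonal block of diagonal $2\alpha$ and an $(8-i)\times(8-i)$ upper-bidiagonal block of diagonal $2\overline{\alpha}$, so $M_{i1}=(2\alpha)^{i-1}(2\overline{\alpha})^{8-i}$ with a plus sign for every $i$ (not $(2\alpha)^{i-2}(2\overline{\alpha})^{8-i}$); with this correction, $\sum_{i=1}^{8}(-1)^{i+1}n_{i1}M_{i1}$ reproduces exactly the determinant quoted in the paper, so the slip is harmless to the conclusion but should be corrected in a final write-up.
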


\begin{proof}
A direct computation shows that $$\det(M_{E_8^\alpha})=-\frac{1}{2}\left( \overline{\alpha}^2(\alpha-\overline{\alpha})(\alpha^4+\alpha^2\overline{\alpha}^2+\overline{\alpha}^4)+\alpha^6(\alpha+\overline{\alpha})\right).$$
The claim follows from $\vol{E_8^\alpha}=|\det(M_{E_8}^\alpha)|$.
\end{proof}

The following theorem states that $E_8^\alpha$ is GWR for all $1<\alpha\leq\sqrt2$. This means that, in particular, $\set{E_8^\alpha}$ is a family of GWR lattices with center density approaching $\delta_8=0.0625$ as $\alpha\to1$.

\begin{theorem}
	\label{conj}
	Let $1<\alpha\leq \sqrt2$. Then $E_8^\alpha$ has the set of minimal vectors $S(E_8^\alpha)=\set{\pm\B{b}_1,\dots,\pm\B{b}_8}$, where $\B{b}_i$ is the $i$-th column of $M_{E_8^\alpha}$. In particular, $E_8^\alpha$ is GWR and $\lambda_1^2(E_8^\alpha)=2$.
\end{theorem}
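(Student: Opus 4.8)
The plan is to show that for $1<\alpha\le\sqrt2$ every nonzero $\B{x}\in E_8^\alpha$ has $\|\B{x}\|^2\ge 2$, with equality precisely when $\B{x}=\pm\B{b}_i$; since $\B{b}_1,\dots,\B{b}_8$ are a basis of $\R^8$ this gives $\lambda_1^2(E_8^\alpha)=2$, $S(E_8^\alpha)=\{\pm\B{b}_1,\dots,\pm\B{b}_8\}$, hence $\kappa(E_8^\alpha)=16=2\cdot 8$ and (the minimal vectors spanning $\R^8$) well-roundedness, so $E_8^\alpha$ is GWR. Write $\B{x}=\sum_{i=1}^8 u_i\B{b}_i$ with $u_i\in\Z$, put $c:=\alpha\overline{\alpha}$, and note $0\le c<1$ on $(1,\sqrt2]$ since $c^2=\alpha^2(2-\alpha^2)$ is strictly decreasing there; also $\alpha+\overline{\alpha}=\sqrt{2+2c}$ and $\alpha-\overline{\alpha}=\sqrt{2-2c}$. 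A direct computation with the columns of $M_{E_8^\alpha}$ gives $\|\B{b}_i\|^2=2$, $\langle\B{b}_1,\B{b}_k\rangle=\tfrac{\alpha+\overline{\alpha}}{2}$ for $2\le k\le6$, $\langle\B{b}_1,\B{b}_7\rangle=\tfrac{\alpha-\overline{\alpha}}{2}$, $\langle\B{b}_1,\B{b}_8\rangle=-\tfrac{\alpha-\overline{\alpha}}{2}$, while $\langle\B{b}_k,\B{b}_\ell\rangle$ equals $2$ if $k=\ell$, $c$ if $|k-\ell|=1$ and $0$ otherwise, for $k,\ell\ge2$. Setting $\B{v}:=\sum_{k=2}^8 u_k\B{b}_k$ and $Q(u):=\sum_{k=2}^8 u_k^2+c\sum_{k=2}^7 u_ku_{k+1}$, these relations give $\|\B{v}\|^2=2Q(u)$ and
\begin{equation*}
\|\B{x}\|^2=2u_1^2+u_1L(u)+2Q(u),\qquad L(u):=(\alpha+\overline{\alpha})(u_2+u_3+u_4+u_5+u_6)+(\alpha-\overline{\alpha})(u_7-u_8)=2\langle\B{b}_1,\B{v}\rangle.
\end{equation*}

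The second ingredient is a lemma on the ``chain'' form: for $0\le c<1$ and $0\ne(u_2,\dots,u_8)\in\Z^7$ one has $Q(u)\ge1$, with equality iff exactly one $u_k$ is $\pm1$ and the rest vanish. I would prove this by decomposing $\{2,\dots,8\}$ into maximal runs of consecutive indices on which $u$ is nonzero; on a run of length $\ell$ the restricted quadratic form is $\ge\ell-c(\ell-1)=1+(\ell-1)(1-c)\ge1$ (replace the signs on the run by the alternating pattern and bound each $|u_ku_{k+1}|$ by $\tfrac12(u_k^2+u_{k+1}^2)$, then use $c<1$ and $|u_k|\ge1$), with equality only if $\ell=1$ and that coordinate is $\pm1$; summing over runs finishes it. This settles $u_1=0$ at once: there $\|\B{x}\|^2=2Q(u)\ge2$ with equality iff $\B{x}=\pm\B{b}_k$, $2\le k\le 8$.

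For $u_1\neq0$, replacing $\B{x}$ by $-\B{x}$ lets us assume $u_1=m\ge1$. If $L(u)\ge0$ then $\|\B{x}\|^2=2m^2+mL(u)+2Q(u)\ge2$, with equality forcing $m=1$, $L(u)=0$, $Q(u)=0$, i.e.\ $\B{x}=\B{b}_1$. If $L(u)<0$, the identity $\|\B{v}+\B{b}_1\|^2=2+L(u)+2Q(u)=2-|L(u)|+2Q(u)$ lets one rewrite
\begin{equation*}
\|\B{x}\|^2=(m-1)\bigl(2m+2-|L(u)|\bigr)+\|\B{v}+\B{b}_1\|^2 .
\end{equation*}
When $|L(u)|\le 2m+2$ the first summand is nonnegative, so $\|\B{x}\|^2\ge\|\B{v}+\B{b}_1\|^2$, and everything comes down to the single inequality $\|\B{v}+\B{b}_1\|^2>2$ for all $\B{v}\in\langle\B{b}_2,\dots,\B{b}_8\rangle_{\Z}\setminus\{\B{0}\}$ (equivalently $|L(u)|<2Q(u)$ for nonzero $(u_2,\dots,u_8)$; equivalently $\B{b}_1$ lies in the \emph{open} Voronoi cell of $\B{0}$ in $\langle\B{b}_2,\dots,\B{b}_8\rangle_{\Z}$). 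The complementary regime $|L(u)|>2m+2$ (in particular $|L(u)|>4$) is handled by the crude bound $\|\B{x}\|^2\ge 2Q(u)-\tfrac18 L(u)^2$ — the value at the real minimizer of $m\mapsto 2m^2-m|L(u)|$ — combined with $L(u)^2\le 4\bigl((u_2+\dots+u_6)^2+(u_7-u_8)^2\bigr)$ and a lower bound for $Q(u)$ by a multiple of $\sum_k u_k^2$, after separating off a bounded set of small configurations directly.

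The main obstacle is precisely the inequality $\|\B{v}+\B{b}_1\|^2>2$; it genuinely uses $\alpha>1$, since at $\alpha=1$ one has $E_8^1=\Gamma_8'$ with kissing number $240$ and the inequality fails there. The route I would take: when $\|\B{v}\|^2=2Q(u)>8$ it follows from Cauchy--Schwarz, as $|L(u)|=2|\langle\B{b}_1,\B{v}\rangle|\le 2\|\B{b}_1\|\,\|\B{v}\|=2\sqrt2\,\|\B{v}\|<\|\B{v}\|^2$; so it suffices to treat the finitely many $(u_2,\dots,u_8)\in\Z^7$ with $Q(u)\le 4$, for each of which one verifies $2Q(u)-|L(u)|>0$ as a function of $\alpha\in(1,\sqrt2]$ using $\alpha^2+\overline{\alpha}^2=2$ and $\alpha+\overline{\alpha}<2$; the chain lemma and the $u\mapsto-u$ symmetry cut the bookkeeping down substantially, and one may work in coordinates via $\|\B{v}+\B{b}_1\|^2=(\tfrac12+\alpha u_2)^2+\sum_{k=2}^{6}(\tfrac12+\overline{\alpha}u_k+\alpha u_{k+1})^2+(-\tfrac12+\overline{\alpha}u_7+\alpha u_8)^2+(\tfrac12+\overline{\alpha}u_8)^2$. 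This finite case analysis is the analogue of the computation for $D_n^\alpha$ in Section~\ref{Thm28Pf}, and it is where the real effort lies: arranging it so that it is exhaustive and uniform in $\alpha$. Once done, collecting all cases yields $\lambda_1^2(E_8^\alpha)=2$ and $S(E_8^\alpha)=\{\pm\B{b}_1,\dots,\pm\B{b}_8\}$, hence the theorem.
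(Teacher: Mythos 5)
Your setup and structural reductions are sound, and it is worth noting that the paper itself offers no written proof to compare against: it only states that the argument parallels the coordinate-wise analysis for $D_n^\alpha$ and relies on a computer program. Your Gram-matrix framing checks out: the inner products of the columns are as you list them, the chain lemma for $Q$ is correct for $0\le c<1$ (and $c=\alpha\overline{\alpha}<1$ exactly because $\alpha>1$), the identity $\norm{\B{x}}^2=(m-1)\bigl(2m+2-|L(u)|\bigr)+\norm{\B{v}+\B{b}_1}^2$ is verified, and the Cauchy--Schwarz reduction of the key inequality $\norm{\B{v}+\B{b}_1}^2>2$ to the finitely many $u$ with $Q(u)\le 4$ is legitimate and uniform in $\alpha$, since $Q(u)\ge(1-\cos(\pi/8))\sum_k u_k^2$.

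Two genuine gaps remain, however. First, the decisive step --- verifying $2Q(u)>|L(u)|$ for the finitely many $u$ with $Q(u)\le 4$, uniformly in $\alpha\in(1,\sqrt2]$ --- is precisely the content of the theorem (it is where $\alpha>1$ enters and where the $240$ minimal vectors of $\Gamma_8'$ must be seen to lengthen), and you only describe how you would organize that check; as submitted this is a plan rather than a proof, and it is exactly the part the authors delegate to a computer. Second, your treatment of the regime $|L(u)|>2m+2$ would fail as written: the bound $\norm{\B{x}}^2\ge 2Q(u)-\tfrac18 L(u)^2$ is fine, but combining it with $L(u)^2\le 4\bigl((u_2+\dots+u_6)^2+(u_7-u_8)^2\bigr)$ and $Q(u)\ge\mu\sum_k u_k^2$ produces a quantity that is not even bounded below --- for $u_2=\dots=u_6=N$, $u_7=u_8=0$ and $\alpha$ near $\sqrt2$ (so $c$ near $0$) it behaves like $(8c-\tfrac52)N^2\to-\infty$ --- so ``separating off a bounded set of small configurations'' cannot rescue it, because the failure set of those lossy estimates is unbounded; the loss comes from replacing $(\alpha+\overline{\alpha})^2=2+2c$ by $4$. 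The repair is to keep the exact expression: $2Q(u)-\tfrac18L(u)^2=\norm{\B{v}}^2-\ideal{\B{b}_1,\B{v}}^2/\norm{\B{b}_1}^2$ is the squared length of the component of $\B{v}$ orthogonal to $\B{b}_1$, hence a positive definite quadratic form in $(u_2,\dots,u_8)$ whose least eigenvalue is bounded away from $0$ uniformly on $\alpha\in[1,\sqrt2]$ (the generator matrix is nonsingular even at $\alpha=1$), so it exceeds $2$ outside an explicit bounded set that can be folded into the same finite verification. With that repair your skeleton is viable, but the theorem is proved only once the finite case analysis is actually carried out.
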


\begin{proof}
 The proof is similar to the proof of Theorem \ref{Dn_deformed:gwr} (see Appendix, Section \ref{Thm28Pf}), but even longer and relies on a computer program so we simply omit it here.
\end{proof}

We can now compute the center density of the lattice $E_8^\alpha$.

\begin{proposition}
	The center density of $E_8^\alpha$ is given by
	$$\delta(E_8^\alpha)=\frac{1}{8 \left( \overline{\alpha}^2(\alpha-\overline{\alpha})(\alpha^4+\alpha^2\overline{\alpha}^2+\overline{\alpha}^4)+\alpha^6(\alpha+\overline{\alpha})\right)}.$$
\end{proposition}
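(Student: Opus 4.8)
The plan is to derive the center density directly from its definition, feeding in the two ingredients already established: the lattice minimum from Theorem~\ref{conj} and the volume from Lemma~\ref{lem:e8_vol}. Since $E_8^\alpha$ has rank $n=8$, the definition $\delta(\Lambda)=\lambda_1(\Lambda)^n/(2^n\vol{\Lambda})$ specializes to $\delta(E_8^\alpha)=\lambda_1(E_8^\alpha)^{8}/(2^{8}\vol{E_8^\alpha})$, so everything reduces to plugging in the known quantities and simplifying the numerical constant.

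First I would invoke Theorem~\ref{conj}, which for every $1<\alpha\le\sqrt2$ gives $\lambda_1^2(E_8^\alpha)=2$; hence $\lambda_1(E_8^\alpha)^{8}=\bigl(\lambda_1^2(E_8^\alpha)\bigr)^{4}=2^{4}=16$. Next I would substitute the volume from Lemma~\ref{lem:e8_vol}, namely $\vol{E_8^\alpha}=\tfrac12\bigl(\overline{\alpha}^2(\alpha-\overline{\alpha})(\alpha^4+\alpha^2\overline{\alpha}^2+\overline{\alpha}^4)+\alpha^6(\alpha+\overline{\alpha})\bigr)$. Combining the two, and using $2^{8}\cdot\tfrac12=128$ together with $16/128=1/8$, one obtains
$$\delta(E_8^\alpha)=\frac{16}{2^{8}\cdot\tfrac12\bigl(\overline{\alpha}^2(\alpha-\overline{\alpha})(\alpha^4+\alpha^2\overline{\alpha}^2+\overline{\alpha}^4)+\alpha^6(\alpha+\overline{\alpha})\bigr)}=\frac{1}{8\bigl(\overline{\alpha}^2(\alpha-\overline{\alpha})(\alpha^4+\alpha^2\overline{\alpha}^2+\overline{\alpha}^4)+\alpha^6(\alpha+\overline{\alpha})\bigr)},$$
which is the asserted formula.

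I expect essentially no obstacle here, as the statement is an immediate corollary of the two preceding results; the only points requiring care are the bookkeeping $\lambda_1^{8}=(\lambda_1^2)^4=2^4$ and the arithmetic of the constant $2^{8}\cdot\tfrac12$. As a sanity check I would also note that letting $\alpha\to1$ (so $\overline{\alpha}\to1$) the bracket tends to $0+2=2$, recovering $\delta(\Gamma_8')=1/(8\cdot2)=0.0625=\delta_8$, consistent with $E_8^1=\Gamma_8'$ even though Theorem~\ref{conj} as stated requires $\alpha>1$.
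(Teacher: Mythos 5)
Your proposal is correct and is essentially identical to the paper's own proof: both simply substitute $\lambda_1^2(E_8^\alpha)=2$ from Theorem~\ref{conj} and the volume formula from Lemma~\ref{lem:e8_vol} into the definition $\delta=\lambda_1^8/(2^8\vol{E_8^\alpha})$ and simplify $2^4/(2^8\cdot\tfrac12)=1/8$. The sanity check at $\alpha\to1$ is a nice extra, but nothing further is needed.
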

\begin{proof}
By Lemma \ref{lem:e8_vol} and Theorem \ref{conj}, \begin{align*}\delta(E_8^\alpha)&=\frac{\lambda_1(E_8^\alpha)^8}{2^8\vol{E_8^\alpha}}\\
	&
	=\frac{2^4}{2^8\cdot \frac{1}{2}\left( \overline{\alpha}^2(\alpha-\overline{\alpha})(\alpha^4+\alpha^2\overline{\alpha}^2+\overline{\alpha}^4)+\alpha^6(\alpha+\overline{\alpha})\right)} \\
	&=\frac{1}{8 \left( \overline{\alpha}^2(\alpha-\overline{\alpha})(\alpha^4+\alpha^2\overline{\alpha}^2+\overline{\alpha}^4)+\alpha^6(\alpha+\overline{\alpha})\right)}.
\end{align*}
\end{proof}

\subsubsection{Integral deformed $E_8$ lattice}
\label{sec:integral2}

Our goal in this section is to find scaled variants of the $E_8^\alpha$ lattice as a sublattice of $\Z^8$. We apply the exact same strategy as with the $D_n^\alpha$ lattice. Let $\alpha=\frac{p}{q}$ for some relatively prime, positive $p,q\in\Z$ such that $1\leq p/q<\sqrt2$. Then $\overline{\alpha}=\frac{\sqrt{2q^2-p^2}}{q}\in\Q$ if and only if $2q^2-p^2=d^2$ for some $d\in\Z$. If this is the case, then $2qE_8^\alpha$ is a sublattice of $\Z^8$. We want to find small values for $q$ to get a small determinant for $2qE_8^\alpha$, and a value of $\alpha=\frac{p}{q}$ which is close to 1 to maximize the center density. Table \ref{table:pq_vals2} shows some pairs of integers $(p,q)$, and corresponding $\alpha$, $d$, center density and squared lattice minimum, such that $2q^2-p^2=d^2$ for some $d\in\Z$ and thus, $2q{E_8}^\alpha\subseteq\Z^n$. As the table indicates, if we desire a high center density, we have to accept large values for $q$.

\begin{table}[H]
	\centering
	\begin{tabular}{cccccc} 
		\toprule
		$p$ & $q$ & $d$ & $\alpha$ & $\delta(E_8^\alpha)$ & $\lambda_1^2({E_8^\alpha}')$ \\ 
		\midrule
		7 & 5 & 1 & 1.4 & 0.0102162 & 1.27169 \\
		17 & 13 & 7 & 1.30769 & 0.0124829 & 1.33702 \\
		31 & 25 & 17 & 1.24 & 0.0159616 & 1.42177 \\
		49 & 41 & 31 & 1.19512 & 0.0192763 & 1.49045 \\
		71 & 61 & 49 & 1.16393 & 0.0222471 & 1.54482 \\
		97 & 85 & 71 & 1.14118 & 0.0248757 & 1.58856 \\
		127 & 113 & 97 & 1.12389 & 0.0272007 & 1.62445 \\
		161 & 145 & 127 & 1.11034 & 0.0292647 & 1.65442 \\
		241 & 221 & 199 & 1.0905 & 0.0327571 & 1.70171 \\
		337 & 313 & 287 & 1.07668 & 0.0355924 & 1.7374 \\
		449 & 421 & 391 & 1.06651 & 0.0379372 & 1.76533 \\
		647 & 613 & 577 & 1.05546 & 0.0407789 & 1.7975 \\
		881 & 841 & 799 & 1.04756 & 0.0430324 & 1.82183 \\
		1249 & 1201 & 1151 & 1.03997 & 0.0453987 & 1.84638 \\
		1799 & 1741 & 1681 & 1.03331 & 0.0476548 & 1.8689 \\
		2591 & 2521 & 2449 & 1.02777 & 0.0496839 & 1.88849 \\
		4049 & 3961 & 3871 & 1.02222 & 0.0518646 & 1.90888 \\
		6727 & 6613 & 6497 & 1.01724 & 0.0539629 & 1.9279 \\
		30257 & 30013 & 29767 & 1.00813 & 0.0582025 & 1.9647 \\
		95047 & 94613 & 94177 & 1.00459 & 0.0600098 & 1.97977 \\
		301087 & 300313 & 299537 & 1.00258 & 0.0610791 & 1.98853 \\
		\bottomrule
	\end{tabular}
	\caption{Examples of pairs of integers $(p,q)$ which produce a sublattice $2q E_8^\alpha\subseteq \Z^8$. Here ${E_8^\alpha}'$ denotes the lattice $c E_8^{\alpha}$ where $c$ is chosen such that $\vol{c E_8^\alpha}=1$.}
	\label{table:pq_vals2}
\end{table}

\section{Conclusions and future work}
\label{sec:conclusion}





Motivated by the attempt to construct lattices suitable for lattice coset codes for the fading wiretap channel as well as by the mathematical question itself, we have in this paper explored different constructions of generic well-rounded lattices: sublattices of tame lattices and deformed dense lattices. The task of finding GWR lattices with good sphere packing density was motivated by the findings in \emph{e.g.} \cite{analysis,Gnilke-Barreal,Gnilke}, \cite{towards} where it was demonstrated, using fading wiretap channel simulations, that having a small kissing number and a high packing density for the eavesdropper's lattice can lower the correct decoding probability of the eavesdropper.

Furthermore, as discussed in Sec. \ref{sec:app}, when the communication channel is of a good quality, the performance of a lattice code is dictated by its diversity and minimum product distance, which both should be maximized.  The deformed lattices $D_n^\alpha$ and $E_8^\alpha$ defined in this paper are clearly not full diversity. However, a natural way to obtain full diversity variants of the integral deformed lattices discussed in Sections \ref{sec:integral1} and \ref{sec:integral2} is to apply orthogonal transformations which maximize modulation diversity and minimum product distance for the lattice $\Z^n$. Then, since the integral deformed lattices are sublattices of $\Z^n$, if we apply the same orthogonal transformation, the obtained lattices must also have full diversity and minimum product distance lower bounded by the minimum product distance of the optimally rotated $\Z^n$ lattice. For some currently best known rotations,   maximizing the minimum product distance of $\Z^n$, see  \cite{Viterbo_tables}. Let us also note that in scenarios  where the channel quality is constantly low (\emph{e.g.}, due to long communication distance, high speed, physical obstacles, or low transmission power), non-full-diversity lattices can be of great interest. 

As for more applied future work, it is left to evaluate how well the constructed lattices perform in actual wiretap channel simulations. As mentioned earlier, one design criterion for the fading wiretap channel is to minimize the flatness factor of the sublattice, which is equivalent to minimizing the theta series. However, at least in dimension 8, the deformed lattices $E_8^\alpha$ cannot be minimizers of the theta function, since $E_8$ is the unique minimizer. Still, the theta series of $E_8^\alpha$ approaches that of the $E_8$ lattice as the parameter $\alpha$ approaches~1. 

In this paper, we also demonstrated the fact that, at least in dimensions less than~5 and in dimension 8, there exist GWR lattices with density arbitrarily close to the optimal density. One would expect this to hold for other dimensions as well. As far as we know, the research on GWR lattices is rather scarce, even though GWR lattices are widely represented in the set of well-rounded lattices. For instance, in dimension~1, all WR lattices are GWR, and in dimension 2, all WR lattices but the hexagonal lattice are GWR. 

\bibliographystyle{siamplain}

\section*{Appendix} 

\subsection{Proof of Theorem \ref{Dn_deformed:gwr}}
\label{Thm28Pf}

\begin{proof}
The case $\alpha=\sqrt2$ is clear since $D_n^{\sqrt2}=\sqrt2\Z^n$. Let us therefore assume $1<\alpha<\sqrt2$. Note that $\norm{\B{b}_i}^2=\alpha^2+\overline{\alpha}^2=2$ for all $1\leq i\leq n$, which gives $\lambda_1^2(D_n^\alpha)\leq2$. Now suppose that $\B{x}\in S(D_n^\alpha)$; then $\norm{\B{x}}^2\leq2$. Write $\B{x}=\sum_{i=1}^n c_i\B{b}_i$, where $c_i\in\Z$ and not all $c_i$'s are zero. Then
$$\B{x}=
\left[
\begin{array}{c}
	x_1\\
	x_2\\
	x_3\\
	x_4\\
	x_5\\
	\vdots\\
	x_{n-1}\\
	x_n
\end{array}
\right]=
\left[
\begin{array}{c}
	c_1\alpha+c_3\overline{\alpha} \\
	c_1\overline{\alpha}+c_2\alpha \\
	c_2\overline{\alpha}+c_3\alpha+c_4\overline{\alpha} \\
	-c_4\alpha+c_5\overline{\alpha} \\
	-c_5\alpha+c_6\overline{\alpha} \\
	\vdots \\
	-c_{n-1}\alpha+c_n\overline{\alpha} \\
	-c_n\alpha
\end{array}
\right].$$
We have that $\norm{\B{x}}^2\geq x_n^2=c_n^2\alpha^2> c_n^2$ and hence $|c_n|\leq1$. If $c_n=0$, then $\norm{\B{x}}^2\geq x_{n-1}^2=c_{n-1}^2\alpha^2>c_{n-1}^2$ and so $|c_{n-1}|\leq1$. Otherwise, if $|c_n|=1$, then also $|c_{n-1}|\leq1$. To see this, suppose that $|c_{n-1}|\geq2$. Then
$\norm{\B{x}}^2\geq x_{n-1}^2+x_n^2\geq (2\alpha-\overline{\alpha})^2+\alpha^2>\alpha^2+\alpha^2>2$, a contradiction.
Inductively, we conclude $|c_i|\leq1$ for $i=4,5,\dots,n$. 

Consider the coefficient $c_4$. We may without loss of generality assume that $c_4\in\set{0,1}$, since we can always replace $\B{x}$ by $-\B{x}$.

\textbf{Case 1}: $c_4=0$. We have $$\norm{\B{x}}^2\geq x_1^2+x_2^2+x_3^2=(c_1\alpha+c_3\overline{\alpha})^2+(c_1\overline{\alpha}+c_2\alpha)^2+(c_2\overline{\alpha}+c_3\alpha)^2.$$
Note that $c_1,c_2,c_3$ cannot all be non-zero. If this was the case, then two of them, say $c_i$ and $c_j$, would have the same sign and we would get the contradiction $\norm{\B{x}}^2\geq(c_i\alpha+c_j\overline{\alpha})^2\geq (\alpha+\overline{\alpha})^2=2+2\alpha\overline{\alpha}>2$. Now suppose that $c_i=0$ for some $i\in\set{1,2,3}$ and denote by $c_j$ and $c_k$ the other two coefficients. Then if both $c_j$ and $c_k$ are non-zero, we get
$$\norm{\B{x}}^2\geq c_j^2\alpha^2+c_k^2\overline{\alpha}^2+(c_k\alpha+c_j\overline{\alpha})^2>\alpha^2+\overline{\alpha}^2=2,$$
since $(c_k\alpha+c_j\overline{\alpha})^2>0$ when $c_j,c_k\in\set{\pm1}$. This is a contradiction. We conclude that either \begin{enumerate}[label=(\roman*)]
	\item $|c_i|=1$ for some $i\in\set{1,2,3}$ and $c_j=0$ for $j\in\set{1,2,3}\setminus{\set{i}}$ or,
	\item $c_i=0$ for all $i\in\set{1,2,3}$.
\end{enumerate}
In case (i), 
\begin{align}
	\norm{\B{x}}^2&=\alpha^2+\overline{\alpha}^2+x_4^2+\cdots+x_n^2 \\
	&=2+c_5^2\overline{\alpha}^2+(-c_5\alpha+c_6\overline{\alpha})^2+\dots+(-c_{n-1}\alpha+c_n\overline{\alpha})^2+c_n^2\alpha^2
\end{align} and thus $c_5=\cdots=c_n=0$. In case (ii),
$$\norm{\B{x}}^2= x_4^2+\dots+x_n^2=c_5^2\overline{\alpha}^2+(-c_5\alpha+c_6\overline{\alpha})^2+\dots+(-c_{n-1}\alpha+c_n\overline{\alpha})^2+c_n^2\alpha^2.$$ Suppose that there are at least two non-zero coefficients $c_i$ where $i\in\set{5,\dots,n}$. Let $c_j$ and $c_k$ be two such coefficients, with $j$ minimal and $k$ maximal. Then $$\norm{\B{x}}^2=c_j^2\overline{\alpha}^2+(-c_j\alpha+c_{j+1}\overline{\alpha})^2+\cdots+(-c_{k-1}\alpha+c_k\overline{\alpha})^2+c_k^2\alpha^2> c_j^2\overline{\alpha}^2+c_k^2\alpha^2=2,$$ a contradiction. We conclude that $|c_i|=1$ for one $i\in\set{5,\dots,n}$ and $c_i=0$ else. \\

\textbf{Case 2}: $c_4=1$. Note that
$$x_1^2+x_2^2+x_3^2=(c_1\alpha+c_3\overline{\alpha})^2+(c_1\overline{\alpha}+c_2\alpha)^2+((c_2+1)\overline{\alpha}+c_3\alpha)^2$$
$$x_4^2+\cdots+x_n^2=(-\alpha+c_5\overline{\alpha})^2+\cdots+(-c_{n-1}\alpha+c_n\overline{\alpha})^2+c_n^2\alpha^2\geq\alpha^2.$$
Suppose that $c_1,c_2,c_3$ are all non-zero. If $c_1$ and $c_3$ have the same sign or $c_1$ and $c_2$ have the same sign, we get a contradiction $\norm{\B{x}}^2\geq x_1^2+x_2^2+x_3^2\geq(\alpha+\overline{\alpha})^2>2$. On the other hand, if $c_2$ and $c_3$ have the same sign, then $x_3^2\geq \alpha^2$ which implies $\norm{\B{x}}^2\geq \alpha^2+\alpha^2>2$, a contradiction. Therefore, $c_i=0$ for some $i\in\set{1,2,3}$. If $c_1=0$ then $$\norm{\B{x}}^2\geq c_3^2\overline{\alpha}^2+c_2^2\alpha^2+((c_2+1)\overline{\alpha}+c_3\alpha)^2+\alpha^2$$
which implies $c_2=0$ and $c_3=0$. If $c_2=0$ then
$$\norm{\B{x}}^2\geq (c_1\alpha+c_3\overline{\alpha})^2+c_1^2\overline{\alpha}^2+(\overline{\alpha}+c_3\alpha)^2+\alpha^2$$
which implies $c_1=0$ and $c_3=0$. If $c_3=0$ then
$$\norm{\B{x}}^2\geq c_1^2\alpha^2+(c_1\overline{\alpha}+c_2\alpha)^2+(c_2+1)^2\overline{\alpha}^2+\alpha^2$$
which implies $c_1=0$ and $c_2=0$. In any case, $c_1=c_2=c_3=0$. Therefore, $x_1^2+x_2^2+x_3^2=\overline{\alpha}^2$ and $$\norm{\B{x}}^2=\overline{\alpha}^2+(-\alpha+c_5\overline{\alpha})^2+\cdots+(-c_{n-1}\alpha+c_n\overline{\alpha})^2+c_n^2\alpha^2\geq2$$
with equality holding if and only if $c_5=c_6=\cdots=c_n=0$.

Cases 1 and 2 imply that $\B{x}=\pm\B{b}_i$ for some $i\in\set{1,\dots,n}$. This shows that $S(D_n^\alpha)=\set{\pm \B{b}_1,\dots,\pm\B{b}_n}$. To see that $D_n^\alpha$ is WR, note that $M_{D_n^\alpha}$ is non-singular and thus $S(D_n^\alpha)$ spans $\R^n$. Moreover, $D_n^\alpha$ is GWR since $\kappa(D_n^\alpha)=|S(D_n^\alpha)|=2n$.
\end{proof}

\end{document}